\def\marginpar#1{\ignorespaces}
\DeclareMathOperator\argmin{argmin}
\DeclareMathOperator\tr{Tr}
\newtheorem{theorem}{Theorem}
\newtheorem{lemma}[theorem]{Lemma}
\newtheorem{corollary}[theorem]{Corollary}
\newtheorem{definition}[theorem]{Definition}
\newtheorem{question}[theorem]{Question}
\newtheorem{assumption}[]{Assumption}
\newcommand{\lb}{\label}
\newcommand{\beq}{\begin{equation}}
\newcommand{\eeq}{\end{equation}}
\newcommand{\bal}{\begin{align}}
\newcommand{\eal}{\end{align}}
\newcommand{\bals}{\begin{align*}}
\newcommand{\eals}{\end{align*}}
\newcommand{\bbR}{{\mathbb{R}}}
\newcommand{\calS}{{\mathcal S}}
\newcommand{\calC}{{\mathcal C}}
\newcommand{\calU}{{\mathcal U}}
\newcommand{\eps}{\varepsilon}
\begin{document}
\title[Exploratory HJB equations]{Exploratory HJB equations and their convergence}

\author[Wenpin Tang]{{Wenpin} Tang}
\address{Department of Industrial Engineering and Operations Research, Columbia University.}
\email{wt2319@columbia.edu}

\author[Yuming Paul Zhang]{{Yuming Paul} Zhang}
\address{Department of Mathematics, University of California, San Diego.}
\email{yzhangpaul@ucsd.edu}

\author[Xun Yu Zhou]{{Xun Yu} Zhou}
\address{Department of Industrial Engineering and Operations Research, Columbia University.}
\email{xz2574@columbia.edu}

\date{\today}
\begin{abstract}
We study the exploratory Hamilton--Jacobi--Bellman (HJB) equation arising from the entropy-regularized exploratory control problem, which was formulated by Wang, Zariphopoulou and Zhou (J. Mach. Learn. Res., 21, 2020) in the context of reinforcement learning in continuous time and space.
We establish the well-posedness and regularity of the viscosity solution to 
the equation, as well as the convergence of the exploratory control problem to the classical stochastic control problem when the level of exploration decays to zero.
We then apply the general results to the exploratory temperature control problem, which was introduced by Gao, Xu and Zhou (arXiv:2005.04057, 2020) to design an endogenous temperature schedule for simulated annealing (SA) in the context of non-convex optimization.
We derive an explicit rate of convergence for this problem as exploration diminishes to zero, and find that the steady state of the optimally controlled process exists, which is however 
neither a Dirac mass  on the global optimum  nor  a Gibbs measure.
\end{abstract}

\maketitle

\textit{Key words:} HJB equations, stochastic control, partial differential equations, reinforcement learning, exploratory control, entropy regularization, simulated annealing,  overdamped Langevin equation.


\section{Introduction}

\quad Reinforcement learning (RL) is an active subarea of machine learning. The RL research has predominantly focused on Markov Decision Processes (MDPs) in discrete time and space; see \cite{SB18} for a systematic account of the theory and applications, as well as a detailed description of bibliographical and historical development of the field. \cite{WZZ20} are probably the first to formulate and develop
an entropy-regularized,  exploratory control framework for RL in continuous time with
continuous feature (state) and action (control) spaces.
In this framework, stochastic relaxed control, a measure-valued process, is employed to  represent exploration through randomization, capturing the notion of ``trial and error" which is the core of RL. Entropy of the control is incorporated explicitly as a regularization term in the objective function to encourage exploration, with a weight parameter $\lambda > 0$ on the entropy to gauge the
tradeoff between exploitation (optimization) and exploration (randomization).
This exploratory formulation has been extended to other settings and used to solve applied problems; see  e.g. \cite{GXZ220} and \cite{FJ20} to mean-field games, and \cite{WZ20} to   Markowitz mean--variance portfolio optimization.  \cite{GXZ20} apply the same formulation to  temperature control of Langevin diffusions arising from simulated annealing  for non-convex optimization. The problem itself is not directly related to RL; however the authors take the same idea of ``exploration through randomization" and invoke
exploratory controls to smooth out the highly unstable yet theoretically optimal bang-bang control. For more literature review on the exploratory control,
see \cite{Zhou21}.

\quad \cite{WZZ20} derive the following  Hamilton--Jacobi--Bellman (HJB) partial differential equation (PDE) associated with the exploratory control problem, parameterized by the weight parameter $\lambda>0$:
\begin{equation}
\label{eq:regellipticPDE}
\scaleto{-\rho v_{\lambda}(x) + \lambda \ln \int_{\mathcal{U}} \exp\bigg(\frac{1}{\lambda} \bigg[h(x,u) + b(x,u) \cdot \nabla v_{\lambda}(x)
+ \frac{1}{2}\tr(\sigma(x,u) \sigma(x,u)^T \nabla^2v_{\lambda}(x))\bigg] \bigg) du = 0}{23 pt}.
\end{equation}
 This equation, called
    the {\it exploratory HJB equation},  appears to be characteristically different from the  HJB equation corresponding to a classical stochastic control problem.
    Among other things, \eqref{eq:regellipticPDE} does not involve the supremum operator in the control variable typically appearing in a classical HJB equation. This is because the supremum is replaced by a distribution among controls  in the exploratory formulation. 
    \cite{WZZ20}  do not study this general equation in terms of its well-posedness (existence and uniqueness of the viscosity solution), regularity, stability in $\lambda$, and the convergence when $\lambda\rightarrow 0^+$.
 They do, however, solve the important linear--quadratic (LQ) case where the exploratory HJB equation can be solved explicitly, leading to the optimal distribution for exploration being a Gaussian distribution. \cite{WZ20} apply this result to a continuous-time Markowitz portfolio selection problem which is inherently LQ.

\quad The goal of this paper is to study the general exploratory HJB equations beyond
the LQ setting.
We first analyze  a class of elliptic PDEs under fairly general assumptions on the coefficients (Theorems \ref{T.2.7} and \ref{T.10}). The application of the general results obtained to the exploratory HJB equations allows us to identify the assumptions needed, to derive the well-posedness of viscosity solutions and their regularity, and to establish a connection between the exploratory control problem and the classical stochastic control problem (Theorems \ref{lem:generalkey} and \ref{thm:main0}).
More specifically to the last point, we show that as the exploration weight decays to zero, the value function of the former converges to that of the latter.
This result, which extends \cite{WZZ20} to the general setting, is important for RL especially in terms of finding the regret bound (or the cost of exploration as termed in \cite{WZZ20}). As a passing note, our analysis for the general class of fully nonlinear elliptic PDEs may be of independent interest to the PDE community.

\quad In the second part of this paper, we 
focus on a special exploratory HJB equation resulting from the exploratory temperature control problem of the Langevin diffusions. The latter problem was introduced by \cite{GXZ20} aiming at designing a state-dependent temperature schedule for simulated annealing (SA). To provide a brief background (see \cite{GXZ20} for more details),
one of the central problems in continuous optimization is to escape from saddle points and local minima, and to find 
a global minimum of a non-convex function $f : \mathbb{R}^d \to \mathbb{R}$.
Applying the SA technique to the gradient descent algorithm consists of adding a sequence of independent Gaussian noises, scaled by ``temperature" parameters 
controlling the level of noises.
The continuous version of the SA algorithm  is governed by the following stochastic differential equation (SDE):
\begin{equation}
\label{eq:SA}
dX_t = -\nabla f(X_t) dt + \sqrt{2 \beta_t} dB_t, \quad X_0 = x,
\end{equation}
where $(B_t, \, t \ge 0)$ is a $d$-dimensional Brownian motion, and the temperature schedule $(\beta_t, \, t \ge 0)$ is a stochastic process. 
If $\beta_t \equiv \beta$ is constant in time, then \eqref{eq:SA} is the well-known overdamped Langevin equation whose stationary distribution is the Gibbs measure $\mathcal{G}_{\beta}(dx) \propto \exp(-f(x)/\beta)dx$ ($f$ is called the landscape, and $\beta$  the temperature).

\quad 
When allowing $(\beta_t, \, t \ge 0)$ to be a stochastic process, we have naturally a  stochastic control problem in which one controls the dynamics \eqref{eq:SA}  through this temperature process in order to achieve the highest efficiency in optimizing $f$. \cite{GXZ20} find that
the optimal control of this problem is of bang-bang type: the temperature process  switches between two extremum points in the search interval.
Such a bang-bang solution is almost unusable in practice since it is highly sensitive to errors. 
Moreover, in the present paper we discover  that the optimal state process under the bang-bang control may even not be well-posed in dimensions $d \ge 3$ (Section \ref{s41}).
These observations support the entropy-regularized exploratory formulation of temperature control proposed by \cite{GXZ20}, not so much from a learning perspective, but from a desire of smoothing out the bang-bang control. 

\quad The results for the general exploratory HJB equations apply readily to the temperature control setting in terms of the well-posedness, regularity and convergence (Corollaries \ref{T.2.7'} and \ref{cor:verification}).
Moreover, due to the special structure of the controlled dynamics \eqref{eq:SA},
we are able to derive an explicit convergence rate of $\lambda\ln(1/\lambda)$ for the exploratory temperature control problem as $\lambda$ tends to zero (Theorem \ref{thm:cvrate}).
Finally, we consider the long time behavior of the associated optimally controlled process and show that it will not converge to the global minimum of $f$ nor any Gibbs measure with landscape $f$ (Theorem \ref{thm:stationary}).
The first property is indeed preferred from an exploration point of view because  exploration is meant to 
involve as many states as possible instead of focusing only on the single state of the
minimizer. The second property hints  the possibility of a more
variety of target measures other than Gibbs measures for SA.

\quad The remainder of the paper is organized as follows.
In Section \ref{s2}, we provide some background on the exploratory control framework and
present the corresponding exploratory HJB equation.
In Section \ref{s3}, we investigate the exploratory  HJB equation and establish general results in terms of its well-posedness, regularity and convergence. 
We also identify the value function of the exploratory control problem as the unique solution to the exploratory  HJB equation.
In Section \ref{s4}, we apply the general results to the exploratory temperature control problem, derive an explicit  convergence rate, 
and study the long time behavior of the associated optimal state process.
While the main focus of the paper is on problems in the infinite time horizon, in Section \ref{s5}  we discuss the case of a  finite time horizon. Finally, 
Section \ref{s6} concludes with a few open questions suggested. 

\section{Background and problem formulation}
\label{s2}

\quad In this section, we provide some background on the exploratory control problem  that is put forth  in \cite{WZZ20}.

\quad Below we collect some notations that will be used throughout this paper.
\begin{itemize}[label = {--}, itemsep = 3 pt]
\item
For $x, y \in \mathbb{R}^d$, $x \cdot y$ denotes the inner product between $x$ and $y$, $|x| = \sqrt{\sum_{i = 1}^d x_i^2}$ denotes the Euclidean norm of $x$, $B_R = \{x: |x| \le R\}$ denotes the Euclidean ball of radius $R$ centered at $0$, and $|x|_{\max} = \max_{1 \le i \le d} |x_i|$ denotes the max norm of $x$.
\item
For a square matrix $X=(X_{ij}) \in \mathbb{R}^{d \times d}$, $X^T$ denotes its transpose, $\tr(X)$  its trace, $|X|$  its spectral norm, and $|X|_{\max} = \max_{1 \le i, j \le d}|X_{ij}|$  its max norm. Moreover,
$\calS^d = \{X \in \mathbb{R}^{d \times d}: X^T = X\}$ denotes the set of $d \times d$ symmetric matrices with the spectral norm.
\item
Let $\mathcal{O} \subseteq \bbR^d$ be open. For a function $f: \mathcal{O} \to \mathbb{R}$,
 $\nabla f$, $\nabla^2 f$ and $\Delta f = \tr(\nabla^2 f)$ denote respectively its gradient, Hessian and Laplacian.
\item For a bounded function $f: \mathcal{O} \to \mathbb{R}$, $||f||_{L^{\infty}(\mathcal{O})} = \sup_{x \in \mathcal{O}} |f(x)|$ denotes the sup norm of $f$.
\item
A function $f \in \mathcal{C}^k(\mathcal{O})$, or simply $f \in \mathcal{C}^k$, if it is $k$-time continuously differentiable.
The $\mathcal{C}^k$ norm is
\begin{equation*}
||f||_{\mathcal{C}^k} = \max_{|\beta| \le k} \sup_{x \in \mathcal{O}} |\nabla^{\beta}f(x)|,
\end{equation*}
where $\nabla^{\beta} f(x) = \frac{\partial ^{|\beta|} f}{\partial x_1^{\beta_1} \cdots \partial x_d^{\beta_d}}(x)$ with $\beta = (\beta_1, \ldots, \beta_d) \in \mathbb{N}^d$ and $|\beta| = \sum_{i=1}^d \beta_i$.
\item
A function $f \in \mathcal{C}^{k,\alpha}(\mathcal{O})$, or simply $f \in \mathcal{C}^{k,\alpha}$ ($0 < \alpha < 1$),  if it is $k$-time continuously differentiable and its $k^{th}$ derivatives of $f$ are $\alpha$-H\"older continuous.
The $\mathcal{C}^{k, \alpha}$ norm is
\begin{equation*}
||f||_{\mathcal{C}^{k, \alpha}} = \max_{|\beta| \le k} \sup_{x \in \mathcal{O}} |\nabla^{\beta}f(x)| + \max_{|\beta| = k}\sup_{x \ne y \in \mathcal{O}}\frac{|\nabla^{\beta}f(x) - \nabla^{\beta}f(y)|}{|x-y|^\alpha}.
\end{equation*}
\item
For two probability measures $\mathbb{P}$ and $\mathbb{Q}$,
 $||\mathbb{P} -\mathbb{Q}||_{TV} = \sup_{A} |\mathbb{P}(A) -\mathbb{Q}(A)|$ denotes the total variation distance between $\mathbb{P}$ and $\mathbb{Q}$.
\end{itemize}

\subsection{Classical control problem}
\label{s21}
Let $(\Omega, \mathcal{F}, \mathbb{P}, \{\mathcal{F}_t\}_{t \ge 0})$ be a filtered probability space on which we define a $d$-dimensional $\mathcal{F}_t$-adapted Brownian motion $(B_t, \, t \ge 0)$.
Let $\mathcal{U}$ be a generic action/control space, and $u = (u_t, \, t \ge 0)$ be a control which is an $\mathcal{F}_t$-adapted process taking values in $\mathcal{U}$.

\quad The classical stochastic control problem is to control the state variable $X_t \in \mathbb{R}^d$, whose dynamics is governed by the SDE:
\begin{equation}
\label{eq:classicalS}
dX^u_t = b(X^u_t, u_t) dt + \sigma(X^u_t, u_t) dB_t, \quad X_0^u = x,
\end{equation}
where $b: \mathbb{R}^d \times \mathcal{U} \to \mathbb{R}^d$ is the drift, and $\sigma: \mathbb{R}^d \times \mathcal{U} \to \mathbb{R}^{d \times d}$ is the covariance matrix of the state variable.
Here the superscript `$u$' in $X^u_t$ emphasizes the dependence of the state variable on the control $u$.
The goal of the control problem is to maximize the total discounted reward, leading to the (optimal) value function:
\begin{equation}
\label{eq:classicalV}
v(x) = \sup_{u \in \mathcal{A}_0(x)} \mathbb{E}\left[ \int_0^{\infty} e^{-\rho t}h(X^u_t, u_t) dt \bigg| X^u_0 = x\right],
\end{equation}
where $h: \mathbb{R}^d \times \mathcal{U} \to \mathbb{R}$ is a reward function, $\rho > 0$ is the discount factor, and $\mathcal{A}_0(x)$ denotes the set of admissible controls which may depend on the initial state value $X^u_0 = x$.

\quad By a standard dynamic programming argument, the HJB equation associated with the  problem \eqref{eq:classicalV} is
\begin{equation}
\label{eq:HJBclassical}
-\rho v(x) + \sup_{u \in \mathcal{U}} \left[h(x,u) + b(x,u) \cdot \nabla v(x) + \frac{1}{2}\tr(\sigma(x,u) \sigma(x,u)^T \nabla^2v(x))\right] = 0.
\end{equation}

In the classical stochastic control setting, the functional forms of $h, b, \sigma$ are given and known.
It is known that a suitably smooth solution to the HJB equation \eqref{eq:HJBclassical} gives the value function \eqref{eq:classicalV}.
Further, the optimal control is represented as a deterministic mapping from the current state to the action/control space:
$u^{*}_t = u^{*}(X^{*}_t)$.
The mapping $u^{*}$ is called an optimal feedback control,
which is derived offline from the ``$\sup_{u \in \mathcal{U}}$'' term in \eqref{eq:HJBclassical}. This procedure of obtaining the optimal feedback control is called the verification theorem.
The corresponding optimally controlled process $(X^{*}_t, \, t \ge 0)$ is governed by the SDE:
\begin{equation}
\label{eq:optimalclassical}
dX^{*}_t = b(X_t^{*}, u^{*}(X^{*}_t))dt +  \sigma(X_t^{*}, u^{*}(X^{*}_t))dB_t, \quad X_0^{*} = x,
\end{equation}
provided that it is well-posed (i.e. it has a unique weak solution).
See e.g. \cite{YZ99, FS06} for detailed accounts of the classical stochastic control theory.

\subsection{Exploratory control problem}
\label{s22}
In the RL setting, the model parameters are unknown, i.e. the functions $h, b, \sigma$ are not known.
Thus, one needs to explore and learn the optimal controls through repeated trials and errors.
Inspired by this, \cite{WZZ20} model exploration by a probability distribution of controls $\pi = (\pi_t(\cdot), \, t \ge 0)$ over the control space $\mathcal{U}$ from which each trial is sampled.
The exploratory state dynamics is
\begin{equation}
\label{eq:expS}
dX^\pi_t = \widetilde{b}(X^\pi_t, \pi_t)dt +  \widetilde{\sigma}(X^\pi_t, \pi_t)dB_t, \quad X^{\pi}_0=x,
\end{equation}
where the coefficients $\widetilde{b}(\cdot, \cdot)$ and $\widetilde{\sigma}(\cdot, \cdot)$ are defined by
\begin{equation}
\label{eq:tildebsig}
\widetilde{b}(x, \pi):=\int_{\mathcal{U}} b(x,u) \pi(u)du, \quad \widetilde{\sigma}(x, \pi):=\bigg(\int_{\mathcal{U}} \sigma(x,u) \sigma(x,u)^T \pi(u)du\bigg)^{\frac{1}{2}},\;(x,\pi)\in \mathbb{R}^d\times \mathcal{P}(\mathcal{U}),
\end{equation}
with $\mathcal{P}(\mathcal{U})$ being the set of absolutely continuous probability density functions on $\mathcal{U}$.
The distributional control $\pi = (\pi_t(\cdot), \, t \ge 0)$ is also known as the relaxed control,
and a classical control $u = (u_t, \, t \ge 0)$ is a special relaxed control when $\pi_t(\cdot)$ is taken as the Dirac mass at $u_t$.

\quad The exploratory control problem is an optimization problem similar to \eqref{eq:classicalV} but under relaxed controls. Moreover,
to  encourage exploration, Shannon's entropy is added to the objective function as a regularization term:
\begin{equation}
\label{eq:regV}
v_{\lambda}(x) = \sup_{\pi \in \mathcal{A}(x)} \mathbb{E}\bigg[ \int_0^{\infty} e^{-\rho t} \bigg( \int_{\mathcal{U}} h(X^\pi_t, u) \pi_t(u) du - \lambda \int_{\mathcal{U}} \pi_t(u) \ln \pi_t(u) du  \bigg) dt \bigg| X^u_0 = x \bigg],
\end{equation}
where $\lambda > 0$ is a weight parameter controlling the level of exploration (also called the temperature parameter),
and $\mathcal{A}(x)$ is the set of admissible distributional  controls specified by the following definition.

\begin{definition}
\label{def:admissible}
We say a density-function-valued stochastic process $\pi = (\pi_t(\cdot), \, t \ge 0)$, defined on a filtered probability space $(\Omega, \mathcal{F}, \mathbb{P}, \{\mathcal{F}_t\}_{t \ge 0})$ along with a $d$-dimensional $\mathcal{F}_t$-adapted Brownian motion $(B_t, \, t \ge 0)$, is an admissible distributional (or exploratory) control, denoted by $\pi \in \mathcal{A}(x)$, if
\begin{enumerate}[itemsep = 3 pt]
\item[($i$)]
For each $t \ge 0$, $\pi_t(\cdot) \in \mathcal{P}(\mathcal{U})$ a.s.;
\item[($ii$)]
For any Borel set $A \subset \mathcal{U}$, the process $(t, \omega) \to \int_A \pi_t(u,\omega)du$ is $\mathcal{F}_t$-progressively measurable;
\item[($iii$)]
The SDE \eqref{eq:expS} has solutions on the same filtered probability space whose distributions are all identical.
\end{enumerate}
\end{definition}

\quad Now we quickly review a formal derivation of the solution to the exploratory control problem \eqref{eq:expS}--\eqref{eq:regV}, following \cite{WZZ20}.
Again, by a dynamic programming argument, the HJB equation to  \eqref{eq:expS}--\eqref{eq:regV} is
\begin{multline}
\label{eq:HJBreg}
-\rho v_{\lambda}(x) + \sup_{\pi \in \mathcal{P}(\mathcal{U})} \int_{\mathcal{U}} \bigg(h(x,u) + b(x,u) \cdot \nabla v_{\lambda}(x) \\
+ \frac{1}{2}\tr(\sigma(x,u) \sigma(x,u)^T \nabla^2v_{\lambda}(x)) - \lambda \ln \pi(u)\bigg) \pi(u)du = 0.
\end{multline}
Then, through the same verification theorem argument,  the optimal distributional control is obtained by solving the maximization problem in  \eqref{eq:HJBreg} with the constraints $\int_{\mathcal{U}} \pi(u) du = 1$ and $\pi(u) \ge 0$ a.e. on $\mathcal{U}$.
This yields the optimal feedback control:
\begin{equation}
\label{eq:regfeedbackC}
\pi^{*}(u,x) = \frac{\exp\left(\frac{1}{\lambda} \left[h(x,u) + b(x,u) \cdot \nabla v_{\lambda}(x) + \frac{1}{2}\tr(\sigma(x,u) \sigma(x,u)^T \nabla^2v_{\lambda}(x))\right] \right)}{\int_{\mathcal{U}} \exp\left(\frac{1}{\lambda} \left[h(x,u) + b(x,u) \cdot \nabla v_{\lambda}(x) + \frac{1}{2}\tr(\sigma(x,u) \sigma(x,u)^T \nabla^2v_{\lambda}(x))\right] \right) du},
\end{equation}
which is the Boltzmann distribution or a Gibbs measure with landscape
\[
\frac{1}{\lambda} \left[h(x,u) + b(x,u) \cdot \nabla v_{\lambda}(x) + \frac{1}{2}\tr(\sigma(x,u) \sigma(x,u)^T \nabla^2v_{\lambda}(x))\right].
\]
By injecting \eqref{eq:regfeedbackC} into \eqref{eq:HJBreg}, we get the nonlinear elliptic PDE \eqref{eq:regellipticPDE}, or  the {exploratory HJB equation}. Note that  this equation is parameterized by
the weight parameter $\lambda>0$.

\quad Applying the feedback control \eqref{eq:regfeedbackC} to the state dynamics \eqref{eq:expS}, we obtain the optimally controlled dynamics:
\begin{equation}
\label{eq:optimalreglam}
dX^{\lambda,*}_t = \widetilde{b}(X^{\lambda,*}_t, \pi^{*}(\cdot, X^{\lambda,*}_t))dt +  \widetilde{\sigma}(X^{\lambda,*}_t, \pi^{*}(\cdot, X^{\lambda,*}_t))dB_t,
\end{equation}
provided that it is well-posed,
 i.e. it has a weak solution which is unique in distribution.
This condition is satisfied if $b(\cdot,\cdot)$ and $\sigma(\cdot,\cdot)$ are measurable and bounded,
$x \to \sigma(x, \cdot)$ is continuous,
and $\sigma(\cdot,\cdot)$ is strictly elliptic in the sense that
$\sigma(\cdot,\cdot)\sigma(\cdot,\cdot)^T \ge \Lambda I$;
see e.g. \cite{SV79} for discussions on the well-posedness of SDEs.
The optimal distributional control is then $\pi_t^{\lambda,*}(\cdot)=\pi^{*}(\cdot, X^{\lambda,*}_t)$, $t \ge 0$.

\quad 
The exploratory HJB equation \eqref{eq:regellipticPDE} is a new type of PDE in control theory, which begs a number of questions.
The first question is, naturally, its well-posedness (existence and uniqueness) in certain sense.
The second question is its dependence and convergence in  $\lambda >0$.
In practice, this parameter is often set to be small.
Thus, we are interested in
the limit of the solution to \eqref{eq:regellipticPDE} as $\lambda \to 0^{+}$, along with its convergence rate.
We will answer these questions in the following two  sections.

\section{Analysis of the exploratory HJB equation}
\label{s3}

\quad In this section, we study the exploratory HJB equation \eqref{eq:regellipticPDE}  under some
general assumptions on the functions $h(\cdot,\cdot), b(\cdot, \cdot), \sigma(\cdot,\cdot)$. For a concise analysis it is advantageous to analyze the general fully nonlinear elliptic PDEs of the form
\beq\lb{eq.1}
F(\nabla^2v,\nabla v,v,x)=0\quad \text{ in }\bbR^d.
\eeq
\quad In Section \ref{s31} we recall a few results on general elliptic PDEs in bounded domains, and prove a comparison principle for viscosity solutions of sub-quadratic growth in $\bbR^d$.
We show in Section \ref{s32} that, under some continuity and growth conditions on the operator $F$, \eqref{eq.1} has a unique smooth solution among functions that have sub-quadratic growth in $\bbR^d$.
In Section \ref{s33}, we consider a sequence of operators $F_\lambda$ that converge locally uniformly to $F$, and derive a convergence rate of the corresponding solutions $v_\lambda$ as $\lambda\to 0^+$ to $v$.
The rate of convergence for not necessarily bounded solutions with general operators (in particular with possibly unbounded coefficients) is novel.
Finally in Section \ref{s34}, we specify the general PDE results to the exploratory HJB equation \eqref{eq:regellipticPDE},
and prove a convergence result for the exploratory control problem \eqref{eq:expS}--\eqref{eq:regV} as $\lambda \to 0^+$.

\subsection{General results on second order elliptic equations}
\label{s31}

The standard references for second order elliptic PDEs are \cite{GT83,CC95}.
Here we recall some definitions and useful results.

\quad Consider the general fully nonlinear equations \eqref{eq.1}.
We make the following assumptions on the operator $F: \mathcal{S}^d \times \mathbb{R}^d \times \mathbb{R} \times \mathbb{R}^d \to \mathbb{R}$.
\begin{enumerate}[itemsep = 3 pt]
\item[($a$)] 
$F$ is continuous in all its variables, and for each $r\geq 1$ there exist $\gamma_{r},\underline{\gamma_{r}}>0$ such that for any $x,y\in B_r$ and $(X,p,q,s)\in \calS^d\times\bbR^{2d}\times\bbR$,
\[
|F(X,p,s,x)-F(X,q,s,y)|\leq \gamma_{r}|x-y|(1+|p|+|q|{+|X|})+\underline{\gamma_r}|p-q|,
\]
\[
|F(0,0,0,x)|\leq \gamma_r.
\]
\item[($b$)] 
There exist $\Lambda_2>\Lambda_1>0$ such that for any $P\in\calS^d$ positive semi-definite, and any $(X,p,s,x)\in\calS^d\times\bbR^d\times\bbR\times\bbR^d$,
    \[
\Lambda_2 \tr(P)   \geq  F(X,p,s,x)-F(X+P,p,s,x)\geq \Lambda_1 \tr(P).
    \]
\item[($c$)] There exists $\rho>0$ such that for all $(X,p,x)\in \calS^d\times\bbR^d\times\bbR^d$ and $t\geq s$,
    \[
   F(X,p,t,x)-F(X,p,s,x)\geq \rho(t-s).
    \]
\end{enumerate}

\quad These assumptions are standard (see \cite{vis,user}), and guarantee the existence and uniqueness of the viscosity solution to the equation \eqref{eq.1} in a bounded domain with a Dirichlet boundary condition.
The proof is given by Perron's method and the comparison principle.
Note that there exist weaker conditions than the ones stated above to ensure the well-posedness of  \eqref{eq.1} in bounded domains;
however, assumptions ($a$)--($c$) are simpler and  sufficient for our purpose.

\quad Now we recall the definition of viscosity solutions to   \eqref{eq.1}.

\begin{definition}
Let $\Omega$ be an open set in $\bbR^d$.
\begin{enumerate}
\item[(i)]
We say an upper semicontinuous (resp. lower semicontinuous) function $v:\Omega\to \mathbb{R}$ is a subsolution (resp. supersolution) to \eqref{eq.1} if the following holds:
for any smooth function $\phi$ in $\Omega$ such that $v-\phi$ has a local maximum (resp. minimum) at $x_0\in \Omega$, we have
\[
{F}(\nabla^2 \phi,\nabla\phi,v(x_0),x_0)\leq 0
\]
\[
(\text{ resp. }\quad {F}(\nabla^2 \phi,\nabla\phi,v(x_0),x_0)\geq 0. )
\]
\item[(ii)]
We say a continuous function $v:\Omega\to \mathbb{R}$ is a (viscosity) solution to \eqref{eq.1} if it is both a subsolution and a supersolution.
\end{enumerate}
\end{definition}

\quad Throughout this paper, by a solution of a PDE we mean a {\it viscosity} solution unless otherwise stated.

\quad Assume that there are a set of functions defined on in $\Omega$: $\{v_\eps(x),\,\eps>0\}$. Recall the definition of half-relaxed limits:
\begin{equation}\label{half}
v^*(x):=\limsup_{\substack{ \Omega\ni x'\rightarrow x,\\ \eps\rightarrow 0}}v_\eps (x'), \quad v_*(x):=\liminf_{\substack{ \Omega\ni x'\rightarrow x,\\ \eps\rightarrow 0}}v_\eps(x').
\end{equation}
Clearly,  $v^*$ is upper semicontinuous and $v_*$ is lower semicontinuous.
It is  known that  sub and supersolutions are stable under the half-relaxed limit operations; see \cite{user}.
\begin{lemma}
\label{lem stability}
Let $\Omega\subseteq\bbR^d$ be open, $\{F_\lambda,\,\lambda>0\}$ be a set of operators satisfying the assumptions (a)--(c) with the same constants.
Suppose that $F_\lambda$ converges locally uniformly in all its variables to an operator $\bar{F}$ as $\lambda\to 0^+$.
Then
\begin{enumerate}
    \item[(i)] if $v_\lambda$ is a sequence of bounded subsolutions to $F_\lambda(\nabla^2v_\lambda, \nabla v_\lambda,v_\lambda,\cdot)\leq 0$ in $\Omega$ for some $\lambda\to 0^+$, then their upper half-relaxed limit $v^*$ is a subsolution to
    \[
    \bar{F}(\nabla^2v^*,\nabla v^*,v^*,\cdot)\leq 0\quad\text{ in }\Omega;
    \]
    \item[(ii)] if $v_\lambda$ is a sequence of bounded supersolutions to $F_\lambda(\nabla^2v_\lambda,\nabla v_\lambda,v_\lambda,\cdot)\geq 0$ in $\Omega$ for some $\lambda\to 0^+$, then their lower half-relaxed limit $v_*$ is a supersolution to
    \[
    \bar{F}(\nabla^2v_*,\nabla v_*,v_*,\cdot)\geq 0\quad\text{ in }\Omega.
    \]
\end{enumerate}
\end{lemma}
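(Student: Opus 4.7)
The plan is to prove (i); part (ii) then follows by applying (i) to the sequence $-v_\lambda$ with the associated operator $\tilde F_\lambda(X,p,s,x):=-F_\lambda(-X,-p,-s,x)$, which inherits analogous structure. Throughout, I will use the classical half-relaxed-limits method from \cite{user}: reduce the subsolution inequality for $v^*$ to a passage to the limit in the pointwise inequality satisfied by $v_\lambda$ against a smooth test function.

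Fix $x_0\in\Omega$ and a smooth $\phi$ such that $v^*-\phi$ has a local maximum at $x_0$. Replacing $\phi$ by $\phi(x)+|x-x_0|^4$ if necessary, I may assume the maximum is strict on some closed ball $\overline{B_r(x_0)}\subset \Omega$, and that $v^*(x_0)-\phi(x_0)=0$. The key selection step is: there exist sequences $\lambda_n\to 0^+$ and $y_n\in B_r(x_0)$ with $y_n\to x_0$ and $v_{\lambda_n}(y_n)\to v^*(x_0)$, such that $v_{\lambda_n}-\phi$ attains its maximum over $\overline{B_r(x_0)}$ at $y_n$. To see this, pick by the definition \eqref{half} a sequence $\lambda_n\to 0^+$ and $x_n\to x_0$ with $v_{\lambda_n}(x_n)\to v^*(x_0)$, let $y_n$ be a maximizer of $v_{\lambda_n}-\phi$ on $\overline{B_r(x_0)}$, and observe that $(v_{\lambda_n}-\phi)(y_n)\ge (v_{\lambda_n}-\phi)(x_n)\to 0$. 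By the uniform boundedness of $\{v_\lambda\}$, passing to a subsequence yields $y_n\to y^*\in\overline{B_r(x_0)}$; then upper semicontinuity of $v^*$ gives
\[
(v^*-\phi)(y^*)\ge \limsup_n (v_{\lambda_n}-\phi)(y_n)\ge 0=(v^*-\phi)(x_0),
\]
and strictness of the maximum at $x_0$ forces $y^*=x_0$ and $(v_{\lambda_n}-\phi)(y_n)\to 0$, hence $v_{\lambda_n}(y_n)\to v^*(x_0)$. In particular $y_n$ lies in the interior $B_r(x_0)$ for all large $n$.

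Since $v_{\lambda_n}$ is a subsolution of $F_{\lambda_n}(\nabla^2\cdot,\nabla\cdot,\cdot,\cdot)\le 0$ and $v_{\lambda_n}-\phi$ has an interior local maximum at $y_n$, the definition of viscosity subsolution yields
\[
F_{\lambda_n}\!\bigl(\nabla^2\phi(y_n),\nabla\phi(y_n),v_{\lambda_n}(y_n),y_n\bigr)\le 0.
\]
Now I let $n\to\infty$. The arguments of $F_{\lambda_n}$ converge: $(\nabla^2\phi(y_n),\nabla\phi(y_n))\to(\nabla^2\phi(x_0),\nabla\phi(x_0))$ by continuity of the derivatives of $\phi$, $v_{\lambda_n}(y_n)\to v^*(x_0)$ by the selection above, and $y_n\to x_0$. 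Because $\{F_\lambda\}$ converges locally uniformly to $\bar F$ and $\bar F$ is continuous (being a locally uniform limit of continuous operators, with hypothesis (a) providing uniform local moduli), we obtain
\[
\bar F\!\bigl(\nabla^2\phi(x_0),\nabla\phi(x_0),v^*(x_0),x_0\bigr)\le 0,
\]
which is exactly the subsolution condition for $v^*$ at $x_0$. As $x_0$ and $\phi$ were arbitrary, this completes (i); (ii) follows by the symmetric argument using $\liminf$ and the modification $\phi(x)-|x-x_0|^4$.

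The only genuinely delicate point is the selection lemma in the second paragraph: one must show that the maximizers $y_n$ of $v_{\lambda_n}-\phi$ not only cluster at $x_0$ but also satisfy $v_{\lambda_n}(y_n)\to v^*(x_0)$ (rather than some smaller limit). The boundedness assumption on $\{v_\lambda\}$ is used here to extract a convergent subsequence of $y_n$, and the strictness of the maximum of $v^*-\phi$ at $x_0$—arranged by the quartic perturbation—forces the cluster point to be $x_0$. The remaining passage to the limit in $F_{\lambda_n}$ is routine given locally uniform convergence and the continuity of $\bar F$.
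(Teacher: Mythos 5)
Your proof is correct and is exactly the classical half-relaxed-limits stability argument from \cite{user}, which is what the paper relies on (it states Lemma \ref{lem stability} with a citation rather than a proof). The only cosmetic slip is the attribution in the selection step: the convergent subsequence of $y_n$ comes from compactness of $\overline{B_r(x_0)}$, while the uniform boundedness of $\{v_\lambda\}$ is what guarantees that $v^*$ is finite-valued and that $\limsup_n v_{\lambda_n}(y_n)$ is controlled.
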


\quad Next we consider the regularity of solutions to  \eqref{eq.1}.
We need the following additional assumption on the operator $F$.
\begin{definition}\lb{df4}
We say that an operator $F=F(X,p,s,x)$ is concave in $X$, if for any $M,N\in\calS^d,p,x\in\bbR^d$, and $s\in\bbR$ we have
\[
-\frac{\partial^2 F(M,p,s,x)}{\partial M_{ij}\partial M_{kl}}N_{ij}N_{kl}\leq 0,
\]
where the derivative and the inequality are in the sense of distribution.
\end{definition}

\quad The following result concerns higher regularity of bounded solutions to concave operators; see e.g. \cite{CC95} and \cite{lian2020pointwise}. 
As a consequence, viscosity solutions to concave operators are classical solutions.

\begin{lemma}[Theorems 2.1 and 2.6, \cite{lian2020pointwise}] \lb{T.1.5}
Assume that $F=F(X,p,s,x)$ satisfies (a)--(c), and
let $R_2 > R_1 > 0$.
If $v$ is a bounded viscosity solution to $F(\nabla^2v,\nabla v,v,x)=0$ in $B_{R_2}$,
then $v$ is $\mathcal{C}^{1,\alpha}$ in $B_{R_1}$. Moreover if $F$ is concave in $X$, then $v$ is $\mathcal{C}^{2,\alpha}$ in $B_{R_1}$. The upper bounds for $||v||_{\mathcal{C}^{1,\alpha}(B_{R_1})}$ or $||v||_{\mathcal{C}^{2,\alpha}(B_{R_1})}$ depend only on the constants in  assumptions (a)--(c), $R_1,R_2$, and $\|v\|_{L^\infty(B_{R_2})}$.
\end{lemma}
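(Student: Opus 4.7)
The plan is to prove the two regularity statements in sequence, exploiting the fact that assumptions (a)--(c) make $F$ uniformly elliptic with ellipticity bounds $\Lambda_1,\Lambda_2$, proper in $s$, and Lipschitz in $(x,p)$ with weights on $|p|$ and $|X|$. First I would perform a standard reduction by scaling: up to a dilation and subtraction of an affine function, one may assume $R_2=1$, $R_1=1/2$, and $\|v\|_{L^\infty(B_1)}\le 1$; under this rescaling the transformed operator still satisfies (a)--(c) with constants depending only on the original ones, $R_1$ and $R_2$.

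For the $\mathcal{C}^{1,\alpha}$ part I would combine the Krylov--Safonov Harnack inequality with Caffarelli's interior regularity method. From (a)--(c) the bounded viscosity solution $v$ lies in the Pucci class, i.e.\ $v$ is a viscosity sub/supersolution to $\mathcal{M}^\pm(\nabla^2 v)\pm C(1+|\nabla v|)=0$, so the Krylov--Safonov theorem yields an interior $\mathcal{C}^\alpha$ estimate via ABP and the Harnack inequality. Next, at any interior point $x_0$, I would freeze coefficients and compare $v$ with a solution $w$ of the constant-coefficient equation $F(\nabla^2 w,\nabla v(x_0),v(x_0),x_0)=0$, for which classical theory gives interior $\mathcal{C}^{1,\bar\alpha}$ estimates. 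Assumption (a) controls the $L^\infty$ error $v-w$ on a small ball, and a dyadic iteration of this comparison converts the approximation into a pointwise $\mathcal{C}^{1,\alpha}$ estimate at every interior point.

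For $\mathcal{C}^{2,\alpha}$ under concavity of $F$ in $X$, I would invoke the Evans--Krylov strategy. Once $v\in\mathcal{C}^{1,\alpha}$ is established, the dependence of $F$ on $\nabla v$ is absorbed into lower-order terms with controlled modulus. Fixing a unit vector $e\in\bbR^d$ and forming the second-order difference quotient $D_h^e v(x):=h^{-2}\bigl[v(x+he)+v(x-he)-2v(x)\bigr]$, the concavity in $X$ combined with the uniform ellipticity implies that $-D_h^e v$ satisfies, in the viscosity sense,
\[
\mathcal{M}^+\!\bigl(\nabla^2(-D_h^e v)\bigr)+C\bigl(1+|\nabla D_h^e v|\bigr)\ge 0
\]
uniformly in small $h$. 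Applying the weak Harnack inequality to the family $\{D_h^e v\}_{e,h}$ yields oscillation decay for the pure second derivatives, and summing over a basis of directions delivers $v\in\mathcal{C}^{2,\alpha}(B_{R_1})$ with constants of the asserted dependence.

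The main obstacle is the coupled dependence of $F$ on $(X,p,x)$ through the weighted Lipschitz bound $\gamma_r|x-y|(1+|p|+|q|+|X|)$: before the $\mathcal{C}^{1,\alpha}$ estimate is available one has no a priori control of $|\nabla v|$ or $|\nabla^2 v|$ in $B_{R_2}$, so the frozen-coefficient error terms do not obviously vanish under scaling. To close the argument I would either bootstrap starting from the $\mathcal{C}^\alpha$ estimate (first upgrading to a crude gradient bound from an auxiliary Pucci inequality) or adopt the pointwise framework of \cite{lian2020pointwise}, in which the required estimates are derived locally without global bounds on $\nabla v$. Tracking the explicit dependence of the constants on $\|v\|_{L^\infty(B_{R_2})}$, $R_1$ and $R_2$ is then a routine bookkeeping exercise in the scaling reduction.
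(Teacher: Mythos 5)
There is nothing in the paper to compare your argument against: Lemma \ref{T.1.5} is not proved in the paper at all, but quoted directly from Theorems 2.1 and 2.6 of \cite{lian2020pointwise}. So the only question is whether your sketch is a viable reconstruction of that cited result, and in outline it is: Krylov--Safonov for the initial $\mathcal{C}^{\alpha}$ bound, Caffarelli's frozen-coefficient iteration for $\mathcal{C}^{1,\alpha}$, and Evans--Krylov plus perturbation for $\mathcal{C}^{2,\alpha}$ under concavity is exactly the standard machinery behind such interior estimates, and you correctly identify the genuine difficulty, namely that the $x$-oscillation of $F$ in assumption (a) carries the weight $(1+|p|+|X|)$, so the comparison error is not small before second-derivative bounds are known.

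The one place your sketch is too thin to stand on its own is the $\mathcal{C}^{2,\alpha}$ step. The second-difference-quotient computation you describe gives, for concave $F$, that $\Delta_h^e v$ lies in a Pucci class uniformly in $h$, which yields at best an interior $\mathcal{C}^{1,1}$ bound; passing from there to H\"older continuity of $\nabla^2 v$ requires the full Evans--Krylov oscillation-decay lemma (controlling $\sup_e(\partial_{ee}v(x)-\partial_{ee}v(y))$ from below via ellipticity and the equation), not merely the weak Harnack inequality applied to the family $\{\Delta_h^e v\}$. Moreover, Evans--Krylov in that form is a constant-coefficient statement; with the $x$- and $p$-dependence permitted by (a) one must run it inside a Caffarelli-type dyadic perturbation scheme, where at each scale the solution is compared to a solution of the frozen concave equation $F(\nabla^2 w,\nabla v(x_0),v(x_0),x_0)=0$ and the $|X|$-weighted error is absorbed using the inductively controlled size of $\nabla^2 w$. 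You gesture at this in your final paragraph as one of two possible fixes, but it is not optional: it is the actual proof, and it is precisely the pointwise scheme carried out in \cite{lian2020pointwise}. With that step made explicit, the claimed dependence of the constants on (a)--(c), $R_1$, $R_2$ and $\|v\|_{L^{\infty}(B_{R_2})}$ follows from your scaling reduction as you say.
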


\quad Finally, we prove a comparison principle for solutions to \eqref{eq.1}, where the operator $F$ is assumed to have a  certain sub-quadratic growth in $x$ in the whole domain $\mathbb{R}^d$.
This comparison principle will be used to prove the uniqueness of the solution to the exploratory HJB equation \eqref{eq:regellipticPDE} under some assumptions on $h(\cdot,\cdot), b(\cdot, \cdot), \sigma(\cdot,\cdot)$.
\begin{lemma}[Comparison principle in $\bbR^d$] \lb{L.cp}
Assume that $F$ satisfies (a)--(c) with $\underline{\gamma_r}>0$ such that
\beq\lb{c.3}
\limsup_{r\to\infty}{\underline{\gamma_r}}/{r}=0.
\eeq
Let  $v_1$ and $v_2$ be locally uniformly bounded and be, respectively, a  subsolution and a  supersolution to  \eqref{eq.1} in
$\bbR^d$ such that
\beq\lb{c.5}
\limsup_{|x|\to\infty}\frac{v_1(x)-v_2(x)}{|x|^2}\leq 0.
\eeq
Then $v_1\leq v_2$ in $\bbR^d$.
\end{lemma}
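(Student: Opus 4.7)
The plan is to adapt the classical doubling-of-variables argument to the unbounded setting $\bbR^d$, using a quadratic penalty that exploits the subquadratic growth bound \eqref{c.5}, and to argue by contradiction. Assume for the sake of contradiction that $M := \sup_{\bbR^d}(v_1 - v_2) > 0$. For small $\eta, \eps > 0$ I would introduce the doubled functional
\[
\Phi_{\eta,\eps}(x,y) := v_1(x) - v_2(y) - \frac{|x-y|^2}{2\eps} - \eta\bigl(|x|^2 + |y|^2\bigr).
\]
The first task is to verify that, for every fixed small $\eta$, this functional attains a positive maximum at some $(x_\eps, y_\eps)$ in a ball $B_{r_\eta}$ whose radius is independent of $\eps$. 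The crucial input is \eqref{c.5} together with local boundedness of $v_1, v_2$, giving a bound $v_1(x) - v_2(x) \leq (\eta/4)|x|^2 + C_\eta$ that, combined with the coupling term, suppresses $\Phi_{\eta,\eps}$ at infinity. Comparing $\Phi_{\eta,\eps}(x_\eps,y_\eps)$ with $\Phi_{\eta,\eps}(\bar{x},\bar{x})$ for some $\bar{x}$ nearly realizing $M$ yields positivity of the maximum and the standard dissipation estimate $|x_\eps - y_\eps|^2/\eps \to 0$ as $\eps \to 0$.

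Next I would invoke the Crandall--Ishii--Lions maximum principle for semicontinuous functions to produce $X, Y \in \calS^d$ and gradients
\[
p := (x_\eps - y_\eps)/\eps + 2\eta x_\eps, \qquad q := (x_\eps - y_\eps)/\eps - 2\eta y_\eps,
\]
belonging respectively to the closed second-order superjet of $v_1$ at $x_\eps$ and subjet of $v_2$ at $y_\eps$, together with a matrix inequality which, when evaluated on diagonal vectors $(\xi,\xi)$, gives $X - Y \leq 4\eta I$, hence $\tr((X-Y)_+) \leq 4d\eta$. The viscosity inequalities read $F(X,p,v_1(x_\eps),x_\eps) \leq 0 \leq F(Y,q,v_2(y_\eps),y_\eps)$, and I would split the nonnegative difference into three pieces obtained by varying one argument at a time: a $s$-piece bounded by (c) via $-\rho(v_1(x_\eps) - v_2(y_\eps)) \to -\rho M_\eta \leq -\rho M/2$; an $X$-piece bounded by (b) via $\Lambda_2 \tr((X-Y)_+) = O(\eta)$; and a $(p,x)$-piece estimated by (a) as $\gamma_{r_\eta}|x_\eps-y_\eps|(1 + |p| + |q| + |Y|) + \underline{\gamma_{r_\eta}}|p-q|$. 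The first summand of this last piece vanishes as $\eps \to 0$ because $|x_\eps - y_\eps| \to 0$ suffices to beat the $\eps^{-1/2}$ blowup of the remaining factors, while the second summand equals $2\eta\underline{\gamma_{r_\eta}}|x_\eps + y_\eps| \leq 4\eta r_\eta \underline{\gamma_{r_\eta}}$. This last term is where hypothesis \eqref{c.3} is essential: rewritten as $4(\eta r_\eta^2)\cdot(\underline{\gamma_{r_\eta}}/r_\eta)$, with $\eta r_\eta^2$ controlled via \eqref{c.5} and $\underline{\gamma_{r_\eta}}/r_\eta \to 0$ by assumption, it is $o(1)$ as $\eta \to 0$. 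Sending $\eps \to 0$ first, then $\eta \to 0$, produces $0 \leq -\rho M/2$, a contradiction.

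The main obstacle I expect is establishing compactness of the maximizer of $\Phi_{\eta,\eps}$, precisely because the hypotheses supply no individual growth bound on $v_1$ or $v_2$, only on their difference. The way around this is to exploit jointly the coupling term $|x-y|^2/(2\eps)$ and the pointwise subquadratic estimate on $v_1 - v_2$: large $|x-y|$ is killed by the coupling, while in the regime of bounded $|x-y|$ one writes $v_1(x) - v_2(y) = (v_1(x) - v_2(x)) + (v_2(x) - v_2(y))$ and uses local boundedness of $v_2$ on a tube around $x$ of width $|x-y|$ to control the difference. A secondary subtlety is that the error term $\eta r_\eta \underline{\gamma_{r_\eta}}$ must vanish in the limit, and this is precisely why the statement demands the sharpened form \eqref{c.3} instead of merely local boundedness of $\underline{\gamma_r}$: one needs $\underline{\gamma_r}$ to grow strictly slower than $r$, so that after multiplication by the penalty scale the product tends to zero.
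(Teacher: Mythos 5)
Your overall strategy (global doubling of variables with a quadratic localization penalty) is genuinely different from the paper's proof, but it has a gap that your proposed workaround does not close. The hypotheses control only the \emph{difference} $v_1-v_2$ via \eqref{c.5}; the individual functions are merely locally uniformly bounded and may grow arbitrarily fast (e.g.\ exponentially growing sub/supersolutions are perfectly compatible with (a)--(c) when $\Lambda_2\ge\rho$). For such $v_1=v_2=w$ with $w(x)\sim e^{|x|}$, taking $y=x-t\,x/|x|$ and optimizing over $t$ gives $\Phi_{\eta,\eps}(x,y)\gtrsim \eps e^{2|x|}-2\eta|x|^2\to+\infty$, so for fixed $\eps,\eta>0$ your functional $\Phi_{\eta,\eps}$ has \emph{infinite} supremum and no maximizer. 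Your suggested patch --- writing $v_1(x)-v_2(y)=(v_1(x)-v_2(x))+(v_2(x)-v_2(y))$ and invoking ``local boundedness of $v_2$ on a tube'' --- fails because local uniform boundedness gives a bound $\sup_{B_r}|v_2|\le M_r$ with $M_r$ growing without any control in $r$, so $v_2(x)-v_2(y)$ is not controlled uniformly over the location of the tube, and neither the coupling term nor the $\eta$-penalty can absorb it. A secondary, fixable slip: the term $\gamma_{r_\eta}|x_\eps-y_\eps|\,|Y|$ is of order $|x_\eps-y_\eps|/\eps=o(\eps^{-1/2})$, which does \emph{not} vanish merely because $|x_\eps-y_\eps|\to 0$; one must absorb it into the good elliptic term $-\Lambda_1\tr(Y-X)$ using $|X|+|Y|\le C\tr(Y-X)$, as the paper does in the proof of Theorem \ref{T.10}.

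The paper sidesteps all of this with a much shorter argument: using \eqref{c.3} one chooses $C$ with $(C+r^2)\rho\ge 2\underline{\gamma_r}r$ and checks directly that $v^\eps:=v_2+\eps(C'+|x|^2)$, with $C'=C+2d\Lambda_2\rho^{-1}$, is again a supersolution; by \eqref{c.5} one has $v^\eps\ge v_1$ outside some ball $B_{R_\eps}$, so the \emph{bounded-domain} comparison principle (Lemma \ref{T.1.2}) applies on $B_{R_\eps}$, and letting $\eps\to 0$ concludes. All the doubling machinery is thus confined to a compact set, where local uniform boundedness suffices. If you want to salvage your route, you would effectively have to reprove a localized version of Lemma \ref{T.1.2} with a boundary barrier, which is precisely the reduction the paper performs.
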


\quad Note that in this lemma, there is no requirement on $\gamma_r$. A proof of Lemma \ref{L.cp} relies on the following classical comparison principle for elliptic PDEs in a {\it bounded} domain.
\begin{lemma}[Comparison principle, Theorem III.1, \cite{vis}] \lb{T.1.2}
Let $\Omega\subseteq\bbR^d$ be a bounded open set, and assume (a)--(c) hold.
Let $u$ (resp. $v$) be a bounded subsolution (resp. supersolution) to \eqref{eq.1} in $\Omega$ such that
\[
\limsup_{x\in\partial\Omega}(u(x)-v(x))\leq 0. 
\]
Then $u\leq v$ in $\Omega$.
\end{lemma}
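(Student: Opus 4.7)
The plan is the classical doubling-of-variables argument of Ishii \cite{vis}. Suppose for contradiction that $M := \sup_{\Omega}(u - v) > 0$. Since $u$ is upper semicontinuous and $v$ lower semicontinuous on the compact set $\bar\Omega$, and since the boundary hypothesis gives $\limsup_{x \to \partial\Omega}(u - v) \le 0$, the supremum $M$ is attained at some interior point $\hat x \in \Omega$. For each $\alpha > 0$ I would introduce the penalized function
\[
\Phi_\alpha(x, y) := u(x) - v(y) - \frac{\alpha}{2}|x - y|^2, \qquad (x, y) \in \bar\Omega \times \bar\Omega,
\]
and let $(x_\alpha, y_\alpha)$ be a maximizer. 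Standard arguments then yield $\Phi_\alpha(x_\alpha, y_\alpha) \ge M$, $\alpha|x_\alpha - y_\alpha|^2 \to 0$, and $(x_\alpha, y_\alpha) \to (\hat x, \hat x)$, so that $x_\alpha, y_\alpha \in \Omega$ for all $\alpha$ large enough.

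Applying Ishii's theorem on sums to $\Phi_\alpha$ at the interior maximizer produces, writing $p_\alpha := \alpha(x_\alpha - y_\alpha)$, symmetric matrices $X_\alpha, Y_\alpha \in \calS^d$ with $X_\alpha \le Y_\alpha$ and $|X_\alpha|, |Y_\alpha| = O(\alpha)$ such that
\[
(p_\alpha, X_\alpha) \in \bar{J}^{2,+} u(x_\alpha), \qquad (p_\alpha, Y_\alpha) \in \bar{J}^{2,-} v(y_\alpha).
\]
The viscosity inequalities then read $F(X_\alpha, p_\alpha, u(x_\alpha), x_\alpha) \le 0$ and $F(Y_\alpha, p_\alpha, v(y_\alpha), y_\alpha) \ge 0$. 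Subtracting these and inserting the telescoping midpoints $F(X_\alpha, p_\alpha, v(y_\alpha), y_\alpha)$ and $F(X_\alpha, p_\alpha, u(x_\alpha), y_\alpha)$, I would apply (b) to the $X_\alpha \to Y_\alpha$ step (which contributes at most $-\Lambda_1 \tr(Y_\alpha - X_\alpha) \le 0$), (c) to the $v(y_\alpha) \to u(x_\alpha)$ step (which contributes at most $-\rho(u(x_\alpha) - v(y_\alpha)) \le -\rho M$), and (a) to the $y_\alpha \to x_\alpha$ step, arriving at
\[
\rho M \;\le\; \gamma_r\, |x_\alpha - y_\alpha|\bigl(1 + 2|p_\alpha| + |X_\alpha|\bigr),
\]
for a fixed $r$ large enough that a neighborhood of $\hat x$ lies inside $B_r$.

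Two of the three pieces on the right vanish in the limit: $|x_\alpha - y_\alpha| \to 0$ directly, and $|p_\alpha| \cdot |x_\alpha - y_\alpha| = \alpha|x_\alpha - y_\alpha|^2 \to 0$ from the standard penalization estimate. The main obstacle is the cross term $|X_\alpha| \cdot |x_\alpha - y_\alpha|$, which from the a priori matrix bound $|X_\alpha| = O(\alpha)$ is only $O(\alpha|x_\alpha - y_\alpha|)$ and hence does not obviously tend to zero. Overcoming this requires reinstating the nonnegative term $\Lambda_1 \tr(Y_\alpha - X_\alpha)$ discarded above and exploiting the refined Ishii matrix inequality — which allows $|X_\alpha|$ to be controlled by $\tr(Y_\alpha - X_\alpha)$ up to vanishing corrections — so that the uniform ellipticity in (b) absorbs the dangerous contribution into the left-hand side. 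This delicate compatibility between conditions (a) and (b) is the technical heart of the argument in \cite{vis}; once it is carried out, the right-hand side tends to $0$, contradicting $M > 0$ and yielding $u \le v$ throughout $\Omega$.
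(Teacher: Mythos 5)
Your proposal is correct and follows exactly the argument the paper is implicitly relying on: the paper states this lemma without proof, quoting Theorem III.1 of \cite{vis}, and your doubling-of-variables sketch — including the identification of the problematic cross term $\gamma_r|x_\alpha-y_\alpha|\,|X_\alpha|$ coming from the $|X|$-dependence in assumption (a), and its absorption into $-\Lambda_1\tr(Y_\alpha-X_\alpha)$ via the refined Ishii matrix estimate $|X_\alpha|+|Y_\alpha|\le C\tr(Y_\alpha-X_\alpha)$ — is precisely the mechanism of that reference, and is the same device the paper itself deploys later in the proof of Theorem \ref{T.10}. No gap to report.
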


\begin{proof}[Proof of Lemma \ref{L.cp}]
It follows from \eqref{c.3} that there exists $C>0$ such that for all $r\geq 0$,
\beq\lb{c.4}
(C+r^2 )\rho\geq 2\underline{\gamma_{r}} r.
\eeq
Set $C':=C+2d\Lambda_2\rho^{-1}$, and for any small $\eps>0$, define
\[
v^\eps(x):=v_2(x)+\eps (C'+|x|^2).
\]
We claim that $v^\eps$ is a supersolution to \eqref{eq.1} in $\bbR^d$.
Indeed, assume that there is $\varphi\in \mathcal{C}^\infty(\bbR^d)$ such that $v^\eps-\varphi$ has a local minimum at $x_0\in\bbR^d$. Then $v-\varphi^\eps$ with $\varphi^\eps:=\varphi-\eps (C'+|x|^2)$ has a local minimum at $x_0$.
Using the facts that $v_2$ is a supersolution and $F$ satisfies (a)--(c), we get by \eqref{c.4} that
\begin{align*}
F(\nabla^2\varphi,\nabla\varphi,v^\eps(x_0),x_0)&\geq F(\nabla^2\varphi^\eps,\nabla\varphi^\eps,v_2(x_0),x_0) -2d\Lambda_2\eps+\rho (C'+|x_0|^2)\eps-2\underline{\gamma_{|x_0|}}|x_0|\eps\\
&\geq (C+|x_0|^2 )\rho\eps-2\underline{\gamma_{|x_0|}} |x_0|\eps\geq 0.
\end{align*}
Hence  $v^\eps$ is a supersolution.

\quad Next, due to \eqref{c.5}, there exists $R_\eps>0$ such that $v^\eps(x)\geq v_1(x)$ for all $|x|\geq R_\eps$. Therefore applying Lemma \ref{T.1.2} to $v_1,v^\eps$ with $\Omega=B_{R_\eps}$ yields
\[
v^\eps(x)\geq v_1(x)\quad\text{ for all }x\in B_{R_\eps}.
\]
Taking $\eps\to 0$ leads to $v_2\geq v_1$ in $\bbR^d$. 
\end{proof}

\quad The above proof of Lemma \ref{L.cp} follows rather standard lines.
The comparison principle (and the well-posedness) for unbounded solutions to nonlinear elliptic equations in unbounded domains do exist
 in the literature; see e.g. \cite{user, capuzzo2005alexandrov, koike2011comparison, armstrong2015viscosity}.
However, those results do not apply to the problem in which we are interested.
In particular, none of these results covers the cases of unbounded $b(\cdot, \cdot)$ and/or $F$ being inhomogeneous in $X$, inherent in  the exploratory control problem.


\subsection{Well-posedness and stability}
\label{s32}
In this subsection we prove the well-posedness of solutions of sub-quadratic growth to  \eqref{eq.1}.
We need some assumptions on $\gamma_r,\underline{\gamma_r}$.
Let $\gamma:(0,\infty)\to (0,\infty)$ be  $\mathcal{C}^2$. Setting $\gamma_r:=\gamma(r),\gamma_r':=\gamma'(r), \gamma_r'':=\gamma''(r)$, we assume that
\beq\lb{3222}
\gamma_r'\geq 0\quad\text{ and }\quad \limsup_{r\to\infty}\frac{\gamma_r}{r^2}+\frac{\gamma_r'}{r}+\frac{\gamma_r'+|\gamma_r''|}{\gamma_r}=0 .
\eeq
This $\gamma_r$ represents a rate of sub-quadratic growth. For instance, we can take $\gamma_r = C(1+r^{a})$ or $C(1+r^{a}\ln(1+r))$ with $a\in [0,2),C>0$.

\begin{theorem}\lb{T.2.7} The following hold:
\begin{enumerate}
    \item[(i)] Assume that (a)--(c) hold with $\gamma_r$ satisfying \eqref{3222} and $\underline{\gamma_r}$ satisfying
\beq\lb{3222'}
\limsup_{r\to\infty} (\underline{\gamma_r}- \gamma_r/r)<\infty.
\eeq
Then there exists a unique solution $v$ of sub-quadratic growth to \eqref{eq.1},
and $v$ is locally uniformly $\mathcal{C}^{1,\alpha}$.
Moreover, there exists $C>0$ such that for all $r\geq 1$,
\beq\lb{b2.1}
\sup_{x\in B_r}|v(x)|\leq C\gamma_r.
\eeq
\item[(ii)] Assume that there are operators $F_\lambda$ satisfying (a)--(c) uniformly with the above $\gamma_r,\underline{\gamma_r}$ for $\lambda\in (0,1)$, such that $F_\lambda\to F$ as $\lambda\to 0^+$ locally uniformly in all the variables.
Then the unique solution $v_\lambda$ to
\beq\lb{1.1'}
F_\lambda(\nabla^2v_\lambda,\nabla v_\lambda,v_\lambda,x)=0\quad\text{ in }\bbR^d
\eeq
is $\mathcal{C}^{1,\alpha}$,  satisfies \eqref{b2.1}, and $v_\lambda \to v$ locally uniformly  as $\lambda\to 0^{+}$.
\item[(iii)] If $F$ (or $F_\lambda$) is concave in $X$, then $v$ (or $v_\lambda$) is locally uniformly $\calC^{2,\alpha}$.
    \end{enumerate}
\end{theorem}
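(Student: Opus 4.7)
My plan is to combine Perron's method on exhausting balls $B_R$ with explicit radial barriers of size $\gamma_r$, and then deduce uniqueness and the $\lambda\to 0^+$ limit from the comparison principle (Lemma~\ref{L.cp}) and the stability result (Lemma~\ref{lem stability}). The main technical work will be in constructing a global supersolution $\Phi(x):=A\gamma(|x|)+B$ (and the subsolution $-\Phi$) on $\bbR^d$. Applying (b) to strip the Hessian, (a) to strip the gradient, and (c) to produce the zeroth-order contribution gives
\[
F(\nabla^2\Phi,\nabla\Phi,\Phi(x),x)\;\geq\;\rho\,\Phi(x)-\gamma_r-\Lambda_2\,\tr((\nabla^2\Phi)^+)-\underline{\gamma_r}\,|\nabla\Phi|.
\]
Since $|\nabla\Phi|\leq A\gamma_r'$ and $|\nabla^2\Phi|\leq A(|\gamma_r''|+\gamma_r'/|x|)$, the decay encoded in \eqref{3222} forces the Hessian correction to be $o(A\gamma_r)$, and combining $\underline{\gamma_r}\leq C+\gamma_r/r$ (which follows from \eqref{3222'}) with $\gamma_r'/r\to 0$ forces $\underline{\gamma_r}|\nabla\Phi|=o(A\gamma_r)$ as well. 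Choosing $A$ large enough then beats all errors for large $r$, while $B$ large enough handles bounded $r$.

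For part (i), with these barriers in hand I would solve the Dirichlet problem for \eqref{eq.1} on each $B_R$ with zero boundary data by Perron's method (standard quadratic boundary barriers exist by uniform ellipticity). Lemma~\ref{T.1.2} applied with $\pm\Phi$ gives $|v_R|\leq \Phi$ on $B_R$, and hence a bound of order $\gamma_r$ on $B_r$ uniform in $R\geq r+1$. The interior $\calC^{1,\alpha}$ estimates of Lemma~\ref{T.1.5} combined with Arzel\`a--Ascoli yield a subsequential locally uniform limit $v$, and by Lemma~\ref{lem stability} $v$ is a viscosity solution to \eqref{eq.1} on $\bbR^d$ satisfying \eqref{b2.1}. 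Uniqueness in the sub-quadratic class follows from Lemma~\ref{L.cp}, whose hypothesis \eqref{c.3} I verify via $\underline{\gamma_r}/r\leq C/r+\gamma_r/r^2\to 0$, using \eqref{3222'} and \eqref{3222}.

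For part (ii), applying part (i) to each $F_\lambda$ produces a unique $v_\lambda$; since the constants in (a)--(c) are uniform in $\lambda$, the barriers $\pm\Phi$ and the $\calC^{1,\alpha}$ estimates are uniform, so \eqref{b2.1} and local equicontinuity hold uniformly. Forming the half-relaxed limits $v^*,v_*$ from \eqref{half} and applying Lemma~\ref{lem stability} ball by ball, $v^*$ is a subsolution and $v_*$ a supersolution to $F=0$ in $\bbR^d$, each of sub-quadratic growth. Lemma~\ref{L.cp} yields $v^*\leq v\leq v_*$, and since by construction $v_*\leq v^*$, equality holds and $v_\lambda\to v$ locally uniformly. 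Part (iii) is then immediate from the concave case of Lemma~\ref{T.1.5}. The main obstacle is the barrier construction: calibrating the constants so that the zeroth-order term $\rho A\gamma_r$ dominates every error produced by the Hessian, gradient, and $x$-dependence of $F$ — which is precisely what \eqref{3222} and \eqref{3222'} are engineered to make possible.
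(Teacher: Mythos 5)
Your proposal is correct and follows essentially the same route as the paper: the same radial barriers of size $\gamma_r$ built from \eqref{3222}--\eqref{3222'}, the comparison principle of Lemma \ref{L.cp} for uniqueness and \eqref{b2.1}, half-relaxed limits plus Lemma \ref{lem stability} for the $\lambda\to 0^+$ convergence, and Lemma \ref{T.1.5} for regularity. The only cosmetic difference is that you obtain existence by solving Dirichlet problems on exhausting balls and passing to a compactness limit, whereas the paper invokes Perron's method directly between the two barriers on $\bbR^d$; both are standard and interchangeable here (just make sure the barrier $A\gamma(|x|)+B$ is regularized to be $\mathcal{C}^2$ at the origin, as the paper does with its $\phi$).
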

\begin{proof}

(i) 
With the comparison principle (Lemma \ref{L.cp}), we only need to produce a supersolution and a subsolution that have sub-quadratic growth at infinity, and invoke Perron's method.

\quad By ($a$)--($c$) and \eqref{3222'}, there exists a constant $C>0$ such that for any $x\in\bbR^d$, $(X,p)\in \calS^d\times\bbR^{d}$, and $s\geq 0$, if $r:=|x|\geq 1$, then
\beq\lb{e90}
 F(X,p,s,x)\geq \rho s-\gamma_r(1+|p|/r)-C(1+|X|);
\eeq
and if $r\in [0,1)$, then
\beq\lb{e91}
 F(X,p,s,x)\geq \rho s-C(1+|p|+|X|).
\eeq
Let $\phi\in \mathcal{C}^2([0,\infty))$ be a regularization of $r\to\gamma_{r}$ such that
\beq\lb{e92}
\phi'(0)=\phi''(0)=0,\,\phi'(\cdot)\geq 0,\, \phi(r)=\gamma_r\text{ for $r\geq 1$},\,\text{ and }
\limsup_{r> 0}\phi'(r)/r<\infty .
\eeq
Define $\bar{v}(x):=C_1+C_2\phi(|x|)$ for some $C_1,C_2>0$ to be determined.
For simplicity, below we drop $(x)$ and $(|x|)$ from the notations of $\bar{v}(x)$, $\phi(|x|)$, $\phi'(|x|)$ and $\phi''(|x|)$. For $|x|\geq 1$, we have from \eqref{e90},
\[
F(\nabla^2\bar{v},\nabla \bar{v},\bar{v},x)\geq \rho (C_1+C_2\phi)-\phi(1+{C_2\phi'}/{|x|})-C(1+C_2|\phi''|).
\]
It follows from \eqref{3222} that
\beq\lb{c5}
\phi'(r)/r+\phi(r)^{-1}\left(|\phi''(r)|+{\phi'(r)}\right)\to 0\quad\text{ as }r\to \infty.
\eeq
Therefore by picking $C_2$ and then $C_1$ to be sufficiently large, we obtain
\[
F(\nabla^2\bar{v},\nabla \bar{v},\bar{v},x)\geq 0.
\]
This inequality holds the same when $|x|<1$ by \eqref{e91} and \eqref{e92}; 
Similarly, one can show that $\underline{v}:=-\bar{v}$ is a subsolution. Clearly both $\bar{v}$ and $\underline{v}$ have at most sub-quadratic growth.
Thus by Perron's method and Lemma \ref{L.cp} (note that by  \eqref{3222}, $\underline{\gamma_r}$ satisfies \eqref{c.3}), we obtain the unique solution $v$ to \eqref{eq.1}, and $\underline{v}\leq v\leq \bar{v}$ yields \eqref{b2.1}. Finally $v\in \calC^{1,\alpha}$ follows from Lemma \ref{T.1.5}.

\quad (ii) The above argument also yields the unique solution $v_\lambda$ to \eqref{1.1'}, with $v_\lambda\in\mathcal{C}^{1,\alpha}$  satisfying \eqref{b2.1} for each $\lambda\in (0,1)$.
Let $v_*,v^*$ be defined as in Lemma \ref{lem stability}. Since $F_\lambda\to F$ locally uniformly, Lemma \ref{lem stability} yields that $v_*$ and $v^*$ are, respectively, a supersolution and a subsolution to  \eqref{eq.1}. As $v_*$ and $v^*$ have at most sub-quadratic growth, applying Lemma \ref{L.cp} yields
$
v_*\geq v^* $ in $\bbR^d$.
The other direction of the inequality holds trivially by definition; hence  $v_*= v^*$ which then equals the unique solution $v$ to \eqref{eq.1}. This shows  $v_\lambda\to v$ locally uniformly as $\lambda\to 0^+$.

\quad (iii) This follows readily from Lemma \ref{T.1.5}.
\end{proof}

\subsection{Rate of convergence}
\label{s33}

Recall that $|X|$ denotes the spectral norm for $X\in\calS^d$.
We make the following assumption on the difference between $F$ and $F_\lambda$:
\begin{itemize}
    \item[($d$)]
    There exists a continuous function $\omega_0:[0,\infty)^4\to [0,\infty)$ such that  for each $\lambda\geq 0$,  $\omega_0(\lambda,\cdot,\cdot,\cdot)$ is non-decreasing in all its variables, $\omega_0(0,\cdot,\cdot,\cdot)\equiv 0$, and for each $(X,p,s,x)\in\calS\times\bbR^d\times\bbR\times\bbR^d$ we have
\[\lb{3334}
 |F_\lambda(X,p,s,x)-F(X,p,s,x)|\leq \omega_0(\lambda,|X|,|p|,|x|).
\]
\end{itemize}



\quad In the remainder of this subsection, we derive a convergence rate of $v_\lambda\to v$ as $\lambda\to 0^+$, assuming that the Lipschitz norms of $v_\lambda$ and $v$ are not too large at $x \to \infty$.
To our best knowledge, this error estimate result in the general setting with possibly unbounded solutions in $\bbR^d$ is new.


\begin{theorem}\lb{T.10}


Let $C_0\geq 1,\eta\in[0,2)$,  $F,F_\lambda$ satisfy (a)--(d) with $\gamma_r=C_0(1+r^\eta)$, $\underline{\gamma_r}=C_0(1+r^{\eta-1})$, and $v$ and $v_\lambda$ be, respectively, the solutions to  \eqref{eq.1} and \eqref{1.1'}. Suppose for some $\alpha\geq 0$, we have for each $r\geq 1$,
\beq\lb{222}
|\nabla v(\cdot)|+ |\nabla v_\lambda(\cdot)|\leq C_0r^\alpha\quad\text{ in } B_r.
\eeq
Then there exist $A,C>0$ such that for all $\lambda\in (0,1)$ and $r\geq 1$, we have
\beq\lb{con1}
\sup_{x\in B_r}|v_\lambda(x)-v(x)|\leq \rho^{-1}\omega_0(\lambda,R^{c_1},R^{c_2},R^{c_3})+CR^{-c_4},
\eeq
where
\[
R:=Ar, \quad \eps:=({2-\eta})/{2},\quad c_2:=1+\eps,\quad c_3:=\max\{\alpha(1+\eps),\,1\},
\]
\[
c_4:=1+\min\{(1-\eta)(1+\eps),0\},\quad c_1:=(2\alpha+\eta)(1+\eps)+c_4.
\]


\end{theorem}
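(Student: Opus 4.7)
The plan is to carry out a doubling-of-variables argument, tailored to the unbounded setting $\bbR^d$. Fix $r\geq 1$, write $R = Ar$ for a large constant $A$ to be chosen, and introduce the penalized test function
$$
\Phi(x,y)= v_\lambda(x) - v(y) - \frac{|x-y|^2}{2\delta} - \mu\,g(x) - \mu\,g(y), \qquad g(z):=(1+|z|^2)^{1+\eps},
$$
where $\delta,\mu>0$ are parameters. Since $\eps=(2-\eta)/2$ makes $2(1+\eps)=4-\eta$ strictly larger than the growth exponent $\eta$ of $v_\lambda,v$ guaranteed by Theorem \ref{T.2.7}(i), $\Phi\to-\infty$ at infinity and attains a global maximum at some $(\bar x,\bar y)\in\bbR^{d}\times\bbR^{d}$. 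Moreover, for any $z\in B_r$,
$$
v_\lambda(z)-v(z)=\Phi(z,z)+2\mu g(z)\le \Phi(\bar x,\bar y)+2\mu(1+r^2)^{1+\eps},
$$
so the penalty contribution at points of $B_r$ is $O(\mu R^{2(1+\eps)})$, to be tuned to $O(R^{-c_4})$. It thus suffices to bound $v_\lambda(\bar x)-v(\bar y)$ from above.

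The standard localization proceeds as follows. Comparing $\Phi(\bar x,\bar y)$ with $\Phi(x_0,x_0)$ for $x_0\in B_r$ and using $|v_\lambda|+|v|\le C\gamma_r$ bounds $\mu(g(\bar x)+g(\bar y))$, and hence $|\bar x|,|\bar y|\lesssim R^{c_3}$. Comparing with $\Phi(\bar y,\bar y)$ and invoking the gradient bound \eqref{222} controls $|\bar x-\bar y|$, yielding $|p|:=|\bar x-\bar y|/\delta\lesssim R^{c_2}$. Crandall--Ishii's maximum principle then produces matrices $X,Y\in\calS^d$ such that
\begin{align*}
F_\lambda\bigl(X+\mu\nabla^2 g(\bar x),\,p+\mu\nabla g(\bar x),\,v_\lambda(\bar x),\,\bar x\bigr) &\le 0,\\
F\bigl(Y-\mu\nabla^2 g(\bar y),\,p-\mu\nabla g(\bar y),\,v(\bar y),\,\bar y\bigr) &\ge 0,
\end{align*}
together with a matrix inequality controlling $X-Y$ by $O(\delta^{-1}+\mu)$. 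Subtracting these two inequalities, using monotonicity (c) to cancel $v_\lambda(\bar x)-v(\bar y)$ against a $\rho$-factor, (d) to extract the $\omega_0$-contribution from $F_\lambda-F$, and (a)--(b) to control the remaining changes of $F$ in $|\bar x-\bar y|,\mu,\delta$, one reaches an inequality of the form
$$
\rho\bigl(v_\lambda(\bar x)-v(\bar y)\bigr)\le \omega_0\bigl(\lambda,|X|,|p|,|\bar x|\bigr) + (\text{penalty/Ishii defect}).
$$

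The principal obstacle is the careful bookkeeping of exponents in this last inequality. Assumption (a) with $\gamma_r=C_0(1+r^\eta)$ and $\underline{\gamma_r}=C_0(1+r^{\eta-1})$, evaluated at $|\bar x|\sim R^{c_3}$, produces transport errors of order $\gamma_{|\bar x|}(1+|p|+|X|)|\bar x-\bar y|$ and $\underline{\gamma_{|\bar x|}}|\bar x-\bar y|$, while the penalty Hessian contributes $\mu\Lambda_2|\nabla^2 g|$ and the Ishii defect contributes terms of order $1/\delta$. The specific choice $2(1+\eps)=4-\eta$ is what aligns the growth of the penalty Hessian with the growth of the coefficients, so that a single optimization in $\delta$ and $\mu$ (with $\delta$ of order a negative power of $R$ involving $\alpha$, and $\mu$ of order a negative power of $R$ involving $\eta$) collapses every error term to $O(R^{-c_4})$, while the arguments of $\omega_0$ become exactly $R^{c_1},R^{c_2},R^{c_3}$. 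The case distinction in $c_4=1+\min\{(1-\eta)(1+\eps),0\}$ reflects whether the $x$-Lipschitz error ($\propto\gamma_r$) or the $p$-Lipschitz error ($\propto\underline{\gamma_r}|p|$) dominates, i.e., whether $\eta\geq 1$ or $\eta<1$. A symmetric argument (swapping the roles of $v_\lambda$ and $v$) bounds $\sup_{B_r}(v-v_\lambda)$, completing the proof of \eqref{con1}.
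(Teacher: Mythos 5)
Your overall architecture matches the paper's: double variables, localize, apply Crandall--Ishii, and use (a)--(d) to sort the error terms, with (c) producing the factor $\rho$ and (d) producing $\omega_0$. The gap is in the localization device. You penalize with $\mu g$, $g(z)=(1+|z|^2)^{1+\eps}$, and you need $2\mu g\lesssim R^{-c_4}$ on $B_r$, which forces $\mu\lesssim r^{-(4-\eta)-c_4}$. But then the only control on the maximum point comes from $\mu\,g(\bar x)\lesssim |\bar x|^{\eta}+r^{\eta}$ (using the growth bound $|v_\lambda|+|v|\lesssim|\cdot|^{\eta}$ from Theorem \ref{T.2.7}), which gives $|\bar x|\lesssim \mu^{-1/(4-2\eta)}\sim r^{(4-\eta+c_4)/(4-2\eta)}$. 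Your claim that $|\bar x|,|\bar y|$ are confined to $B_{R^{1+\eps}}$ (needed so that the position argument of $\omega_0$ is $R^{1+\eps}$ and so that assumption (a) is invoked with $\gamma_{R^{1+\eps}}$) requires $(4-\eta+c_4)/(4-2\eta)\le (4-\eta)/2$, i.e.\ $c_4\le(4-\eta)(1-\eta)$, which fails for all $\eta\ge 1$ (the right side is nonpositive while $c_4>0$) and already for $\eta\approx 0.7$. So for most of the admissible range of $\eta$ the maximum point escapes far beyond $B_{R^{1+\eps}}$, and every subsequent exponent ($c_1,c_2,c_3$ and the coefficient errors $\gamma_{|\bar x|}(1+|p|+|X|)|\bar x-\bar y|$) is off. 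The stated rationale for the degree $2(1+\eps)=4-\eta$ (``aligning the penalty Hessian with the coefficients'') is also not where $1+\eps$ enters: in the paper it is the exponent of the localization \emph{radius} $R_2=R_1^{1+\eps}$, not the degree of a penalty.

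The paper avoids this by taking a localization function $\phi$ that is \emph{identically zero} on $B_r$ (so it contributes no error there and requires no tuning against $R^{-c_4}$), equals at least $C_1R_2^{\eta}$ on $\partial B_{R_2}$, and is a regularization of $\exp(\max\{0,|x|-r\}/R_1)-1$; the exponential growth traps the maximum in $B_{R_2}$ unconditionally, and the self-controlling bounds $|\nabla\phi|\le C(1+\phi)/R_1$, $|\nabla^2\phi|\le C(1+\phi)/(R_1r)$ combined with $\phi(x_1)+\phi(y_1)\le CR_1^2$ at the maximum keep the penalty's first and second derivatives of size $O(R_1)$ and $O(A)$ respectively. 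A polynomial penalty could in principle be salvaged by taking its degree $m$ large enough (then $|\bar x|\lesssim r^{(m+c_4)/(m-\eta)}\to r$), but with your specific choice $m=4-\eta$ the argument does not close. The remaining bookkeeping you describe (the gradient bound \eqref{222} controlling $|\bar x-\bar y|$, the Crandall--Ishii matrix inequality, and the origin of the case split in $c_4$ from the $\underline{\gamma_r}|p-q|$ term) is consistent with the paper once the localization is fixed.
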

\begin{proof}

We will only show that $v$ cannot be too much larger than $ v_\lambda$ for $\lambda\in (0,1)$ in $B_r$; the proof for the other direction is almost identical.
From the assumption and Theorem \ref{T.2.7}, there is $C_1\geq C_0$ such that for all $r\geq1$, we have $\gamma_r\leq C_1r^\eta$, $\underline{\gamma_r}\leq C_1(1+r^{\eta-1})$, and
\beq\lb{338}
|v(\cdot)|+|v_\lambda(\cdot)|\leq C_1r^\eta\quad\text{ in }B_r.
\eeq
Then after writing
\beq\lb{33.7}
\delta_r:=\sup_{x\in B_r} ({v(x)-v_\lambda(x)}),
\eeq
for some $r\geq 1$,
 \eqref{338} yields $\delta_r\leq C_1 r^\eta$.

\quad Let $R_1:=Ar$ for some $A\geq 1$, and $R_2:=R_1^{1+\eps}$ with $\eps=\frac{2-\eta}{2}\in (0,1]$. We consider a radially symmetric, and radially non-decreasing function $\phi:\bbR^d\to [0,\infty)$ such that \beq\lb{3336}
\phi(\cdot)\equiv 0\text{ on $B_r$},\quad \phi(\cdot)\geq C_1R_2^\eta \text{ on $\partial B_{R_2}$},
\eeq
and for some $C=C(d)$,
\beq\lb{3335}
|\nabla\phi(x)|\leq C(1+\phi(x))/R_1,\quad |\nabla^2\phi(x)|\leq C (1+\phi(x))/(R_1r)
\eeq
for all $x\in B_{R_2}$.
A regularization of the map $x\to \exp\left(\max\{0,x-r\}/R_1\right)-1$ will do if $A$ is large enough depending only on $\eta,C_1$. With one fixed $A$, below we prove a finer bound of $\delta_r$ for all $r$ large enough and $\lambda\in (0,1)$.

\quad Due to \eqref{338} and \eqref{3336}, there exists $x_0\in B_{R_2}$ such that
\beq\lb{3.71}
v(x_0)-v_\lambda(x_0)-2\phi(x_0)=\sup_{x\in \bbR^d} \left(v(x)-v_\lambda(x)-2\phi(x)\right)=:\delta'\geq {\delta_r}.
\eeq
Similarly, for any $\beta\geq 1$, we can find $x_1,y_1\in B_{R_2}$ such that
\beq\lb{3.7}
\begin{aligned}
 v(x_1) - v_\lambda(y_1) &-\phi(x_1)-\phi(y_1)- \beta|x_1 - y_1|^2\\
 & = \sup_{
x,y\in \bbR^d}
\left(v(x) - v_\lambda(y) - \phi(x)-\phi(y)-\beta|x - y|^2\right)\\
&\geq v(x_0)-v_\lambda(x_0)-2\phi(x_0)=\delta'.
\end{aligned}
\eeq
If $\phi(x_1)\leq \phi(y_1)$, noting
\[
|v_\lambda(x_1)-v_\lambda(y_1)|\leq C_0R_2^\alpha|x_1-y_1|
\]
in view of \eqref{222}, we conclude from \eqref{3.71} and \eqref{3.7} that
\begin{align*}
\delta' \leq v(x_1) - v_\lambda(x_1)& -2\phi(x_1)+C_0R_2^{\alpha}|x_1-y_1|-{\beta}|x_1-y_1|^2\\
&\qquad\qquad \leq \delta'+C_0R_2^{\alpha}|x_1-y_1|-{\beta}|x_1-y_1|^2,
\end{align*}
which yields
\beq\lb{3.8}
|x_1-y_1|\leq C_0 R_2^{\alpha}/\beta.
\eeq
This estimate still holds if $\phi(x_1)\geq \phi(y_1)$ by the same argument.
Let us write $C_\phi:=\phi(x_1)+\phi(y_1)$. It follows from \eqref{3.7} that
\beq\lb{3.12}
 v(x_1)-v_\lambda(y_1)\geq C_\phi+\delta'.
\eeq
Since $(1+\eps)\eta\leq 2$,  \eqref{3.12} and \eqref{338} yield
$C_\phi\leq C_1 R_2^\eta \leq C_1 R_1^2 $.

\quad Now we proceed by making use of \eqref{3.7}. 
Since $v,v_\lambda$ are solutions to \eqref{eq.1} and \eqref{1.1'} respectively, the Crandall-Ishii lemma \cite[Theorem 3.2]{user} yields that there are matrices $X$, $Y\in\calS^d$ satisfying the following:
\beq\lb{3.1}
-(2\beta+|J|)I\leq
\begin{pmatrix}
X & 0\\
0 & -Y
\end{pmatrix}
\leq J+\frac{1}{2\beta}J^2,
\eeq
where
\[
J:=2\beta\begin{pmatrix}
I & -I\\
-I & I
\end{pmatrix},
\]
and
\begin{align}\lb{3.2}
    F(X+\nabla^2\phi(x_1),p_1,v(x_1),x_1)\leq 0\leq F_\lambda(Y-\nabla^2\phi(y_1),q_1,v_\lambda(y_1),y_1),
\end{align}
where
\[
p_1:=2\beta (x_1-y_1)+\nabla\phi(x_1),\quad q_1:=2\beta (x_1-y_1)-\nabla\phi(y_1).
\]
Using (c) and \eqref{3.2} yields 
\[
\begin{aligned}
 \rho(v(x_1)-v_\lambda(y_1))\leq  F_\lambda(Y-\nabla^2\phi(y_1),q_1,v(x_1),y_1)-F(X+\nabla^2\phi(x_1),p_1,v(x_1),x_1).
\end{aligned}
\]
Writing $Y':=Y-\nabla^2\phi(y_1)$ and $Z:=X-Y+\nabla^2\phi(x_1)+\nabla^2\phi(y_1)$, we conclude  from (a),(b),(d), and $x_1,y_1\in B_{R_2}$ that
\beq\lb{3.13}
\begin{aligned}
 \rho(v(x_1)-v_\lambda(y_1))
&\leq \omega_0(\lambda,|Y'|,|y_1|,|q_1|) +C_1 R_2^\eta|x_1-y_1|(1+|p_1|+|q_1|+|Y'|)\\
&\qquad\qquad +C_1 (1+R_2^{\eta-1})|p_1-q_1|+\Lambda_2\tr(Z)1_{Z\geq 0}+\Lambda_1\tr(Z)1_{Z\leq 0}.
\end{aligned}
\eeq
Then we apply \eqref{338}, \eqref{3335}, \eqref{3.8}, and $C_\phi\leq C_1R_1^2$ to obtain
\begin{align*}
|q_1|&\leq C(R_2^\alpha+C_\phi R_1^{-1})\leq C(R_2^\alpha+R_1),\\
|x_1-y_1|(1+|p_1|+|q_1|+|Y'|) &   \leq C(R_2^{2\alpha}+C_\phi R_2^\alpha R_1^{-1}+R_2^\alpha|Y'|)/\beta \\
&\leq C(R_2^{2\alpha}+C_\phi R_2^\alpha R_1^{-1}+R_2^\alpha|Y|)/\beta,\\
(1+R_2^{\eta-1})|p_1-q_1|&\leq C(1+R_2^{\eta-1})(1+C_\phi)R_1^{-1}\leq C(1+C_\phi)R_1^{-c_4},
\end{align*}
where $c_4:=1+\min\{(1-\eta)(1+\eps),0\}\in (0,1]$ by $\eps=\frac{2-\eta}{2}$, and $C=C(C_0,C_1)>0$.

\quad Notice that $X\leq Y$, and $-6\beta I\leq  Y\leq 6\beta I$  by \eqref{3.1}. Therefore \eqref{3335} implies for some $C=C(\Lambda_2)>0$,
\[
\Lambda_2\tr(Z)1_{Z\geq 0}+\Lambda_1\tr(Z)1_{Z\leq 0}\leq -\Lambda_1\tr(Y-X)+ 
C(1+C_\phi)R_1^{-1}.
\]
Moreover, it follows from  $\beta\geq1$, $R_1=Ar$ and $C_\phi\leq CR_1^2$ that for some $C=C(A)>0$,
\[
|Y'|\leq  |Y|+CC_\phi (R_1r)^{-1}\leq C\beta.
\]
Plugging the above estimates into \eqref{3.13} shows
\[
\begin{aligned}
\rho(v(x_1)-v_\lambda(y_1))&\leq \omega_0(\lambda,C\beta,R_2,C(R_2^\alpha+R_1))-\Lambda_1\tr(Y-X)+CR_2^{\alpha+\eta}|Y|/\beta\\
&\qquad\qquad +C(R_2^{2\alpha+\eta}/\beta+R_1^{-c_4})+CC_\phi(R_2^{\alpha+\eta}R_1^{-1}/\beta+R_1^{-c_4}).
\end{aligned}
\]
Notice that by  \citet[Lemma 3.1]{vis} and \eqref{3.1}, there is $C=C(d)>0$ such that
\[
|X|+|Y|\leq C\tr(Y-X).
\]
Therefore if $CR_2^{\alpha+\eta}\leq \Lambda_1\beta $, we obtain
\beq\lb{3.11}
\begin{aligned}
\rho(v(x_1)-v_\lambda(y_1))&\leq \omega_0(\lambda,C\beta,R_2,C(R_2^\alpha+R_1))\\
&\qquad\qquad +C(R_2^{2\alpha+\eta}/\beta+R_1^{-c_4})+CC_\phi(R_2^{\alpha+\eta}R_1^{-1}/\beta+R_1^{-c_4}).
\end{aligned}
\eeq


\quad Now we pick $\beta:=R_1^{c_1}$ with $c_1:=(1+\eps)(2\alpha+\eta)+c_4$. Then
\[
\Lambda_1\beta=R_1^{c_1}\geq CR_2^{\alpha+\eta}=R_1^{(1+\eps)(\alpha+\eta)}
\]
holds when $r\geq 1$ ($R_1=Ar$) is large enough. By \eqref{3.11}, there exist $C,C'>0$ depending only on $C_0,C_1$ and $\eta$ such that
\[
\begin{aligned}
\rho(v(x_1)-v_\lambda(y_1))\leq \omega_0\left(\lambda,CR_1^{c_1},R_1^{1+\eps},C(R_1^{\alpha(1+\eps)}+R_1)\right)+CR_1^{-c_4}+ C'C_\phi R_1^{-c_4}.
\end{aligned}
\]
Recall \eqref{3.12}.
Upon further assuming $Ar=R_1\geq (C'/\rho)^{1/c_4}$, we have
\[
\rho\delta_r\leq \rho\delta'\leq \omega_0\left(\lambda,CR_1^{c_1},R_1^{1+\eps},C(R_1^{\alpha(1+\eps)}+R_1)\right)+CR_1^{-c_4}.
\]
This leads to the desired conclusion with $A$ replaced by $CA$, where $A,C>0$ depend only on $d,\eta$, $C_0$, $C_1$, $\rho$.
\end{proof}

\subsection{Exploratory HJB equations: well-posedness and convergence}
\label{s34}
Now we apply the general PDE results established in the previous subsections to study
the well-posedness of the exploratory HJB equation \eqref{eq:regellipticPDE} for fixed $\lambda>0$, as well as the convergence of  the solution as 
$\lambda \to 0^{+}$.

\quad We assume that the control space $\mathcal{U}$ is a non-empty open subset of some Euclidian space $\mathbb{R}^l$, 
and let $\rho > 0$.
Consider the operator associated with the exploratory HJB equation \eqref{eq:regellipticPDE}:
\begin{equation}
\label{eq:genFlam}
F_\lambda(X,p,s,x):=
\rho s-\lambda\ln \int_{u\in \mathcal{U}}\exp\left(\frac{1}{\lambda}(h(x,u)+b(x,u)p+\tr(\sigma(x,u)\sigma(x,u)^T X))\right)du,
\end{equation}
and the operator associated with the classical HJB equation \eqref{eq:HJBclassical}:
\begin{equation}
\label{eq:genFsup}
F(X,p,s,x):=\rho s-\sup_{u\in \mathcal{U}}\left(h(x,u)+b(x,u)p+\tr(\sigma(x,u)\sigma(x,u)^T X)\right).
\end{equation}

We also make the following assumptions on the functions $h(\cdot,\cdot), b(\cdot,\cdot), \sigma(\cdot,\cdot)$.
\begin{assumption}
\label{assump:hbsig}

There are positive $\gamma_r,\underline{\gamma_r}\in \mathcal{C}^2(0,\infty)$ satisfying \eqref{3222} and \eqref{3222'} such that the following hold:
\begin{enumerate}
\item[(i)]
For each $r \ge 1$, $|h(\cdot, \cdot)|$ is bounded by ${\gamma_r}$ in $B_r\times \mathcal{U}$, and $|b(\cdot,\cdot)|$ is bounded by $\underline{\gamma_r}$ in $B_r \times \mathcal{U}$.
\item[(ii)]
For each $r \ge 1$ and all $u \in \mathcal{U}$, $h(\cdot,u),$ $b(\cdot, u)$ and $\sigma(\cdot,u)$ are uniformly Lipschitz continuous with Lipschitz bound $\gamma_r$ in $B_r$.
\item[(iii)]
There exist $\Lambda_2>\Lambda_1>0$ such that $\Lambda_1I\leq \sigma(\cdot,\cdot)\sigma(\cdot,\cdot)^T\leq \Lambda_2I$ in $\bbR^d\times\mathcal{U}$.
\item[(iv)]
$h(\cdot,\cdot), b(\cdot,\cdot), \sigma(\cdot,\cdot)$ are locally uniformly continuous in $\mathbb{R}^d \times \mathcal{U}$.

\item[(v)] We have
\beq\lb{v1}
\sup_{\lambda\in (0,1)}\left| \lambda \ln\int_{u\in\calU}\exp\left(\frac{h(0,u)}{\lambda}\right)du \right|<\infty,
\eeq
and the following holds locally uniformly in $(X,p,x)\in \calS^d\times\bbR^d\times\bbR^d$:
\beq\lb{v2}
\limsup_{N\to\infty}\sup_{\lambda\in(0,1)}\left|\lambda \ln\int_{u\in\calU\backslash [-N,N]^l}\exp\left(\frac{1}{\lambda}(
h(x,u)+b(x,u)p+\tr(\sigma(x,u)\sigma(x,u)^T X
)\right)du\right|=0.
\eeq
\end{enumerate}
\end{assumption}

\quad The condition \eqref{v1} is to ensure that $F_\lambda$ with $\lambda\in (0,1)$ are well-defined, whereas
the condition \eqref{v2} is to guarantee  $F_\lambda\to F$ locally uniformly as $\lambda\to 0^+$ which is a reasonable requirement.
If $\calU$ is  a bounded set, then assumption $(v)$ holds trivially. Note that Assumption \ref{assump:hbsig} rules out the LQ case (i.e. $b(\cdot,\cdot), \sigma(\cdot,\cdot)$ are linear and $h(\cdot, \cdot)$ quadratic); but the corresponding exploratory and classical HJB equations for LQ can both be solved explicitly and the solutions are quadratic functions; see \cite{WZZ20}. In other words, the LQ case can be solved separately and specially and hence is not our concern here.

\smallskip

\quad We have the following result by specializing  the results in Subsections \ref{s32}--\ref{s33} to the operators $F_{\lambda}$, $F$ defined by \eqref{eq:genFlam}--\eqref{eq:genFsup}.
\begin{theorem}
\label{lem:generalkey}
Let $F_{\lambda}, F$ be defined by \eqref{eq:genFlam}--\eqref{eq:genFsup} and Assumption \ref{assump:hbsig} hold.
Then the assumptions (a)--(d) hold uniformly for $F_\lambda, F$ for all $\lambda\in(0,1)$, with
\[
\omega_0(\lambda,x_1,x_2,x_3):=\sup_{|X|\leq x_1,|p|\leq x_2, |x|\leq x_3} |F_\lambda(X,p,0,x)-F(X,p,0,x)|,
\]
and $F_{\lambda}, F$ are concave in $X$.
Consequently, the equation $F_{\lambda}(\nabla^2v_{\lambda}, \nabla v_{\lambda}, v_{\lambda}, x) = 0$ (resp. $F(\nabla^2v, \nabla v, v, x) = 0$) has a unique solution $v_{\lambda}$ (resp. $v$) of sub-quadratic growth.
Moreover,
\begin{enumerate}[itemsep = 3 pt]
\item[(i)]
$v_{\lambda}, v$ are locally $\mathcal{C}^{2,\alpha}$ for some $\alpha \in (0,1)$.
\item[(ii)]
There exists $C > 0$ such that $\sup_{B_r} |v(x)| + |v_{\lambda}(x)| \le C \gamma_r$ for each $r \ge 1$.
\item[(iii)]
$v_{\lambda} \to v$ locally uniformly as $\lambda \to 0^{+}$.
\end{enumerate}
\end{theorem}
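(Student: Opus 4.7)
The strategy is to verify that $F_\lambda$ and $F$ satisfy assumptions (a)--(d) of Sections \ref{s31}--\ref{s33} uniformly in $\lambda\in(0,1)$, together with concavity in $X$, and then to invoke Theorem \ref{T.2.7} and Lemma \ref{T.1.5}. Setting
\[
G_u(X,p,x):=h(x,u)+b(x,u)\cdot p+\tr(\sigma(x,u)\sigma(x,u)^T X),
\]
one has $F(X,p,s,x)=\rho s-\sup_{u\in\calU}G_u(X,p,x)$ and $F_\lambda(X,p,s,x)=\rho s-\lambda\ln\int_{\calU}e^{G_u(X,p,x)/\lambda}du$. Since each $G_u$ is affine in $(X,p)$, both the supremum and the log-sum-exp are convex in $(X,p)$; in particular $F_\lambda$ and $F$ are concave in $X$ in the sense of Definition \ref{df4}.

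For (b), the linearity of $G_u$ in $X$ and the ellipticity $\Lambda_1 I\leq\sigma\sigma^T\leq\Lambda_2 I$ from Assumption \ref{assump:hbsig}(iii) give $G_u(X+P,p,x)-G_u(X,p,x)=\tr(\sigma\sigma^T P)\in[\Lambda_1\tr(P),\Lambda_2\tr(P)]$ for $P\geq 0$, uniformly in $u$; monotonicity of both $\sup_u$ and of $\lambda\ln\int e^{\cdot/\lambda}du$ in the integrand transfers this bound to $F_\lambda$ and $F$. Assumption (c) is immediate since only the $\rho s$ term depends on $s$. For (a), the key observation is that the partial derivatives of $\lambda\ln\int e^{G_u/\lambda}du$ with respect to $p$, $x$, or $X$ are \emph{Gibbs-weighted averages} over $u$ of the corresponding derivatives of $G_u$, i.e., integrals against the probability measure $\pi^*(du)\propto e^{G_u/\lambda}du$. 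Averaging preserves pointwise bounds, so the bound $\underline{\gamma_r}$ on $|b|$ (Assumption \ref{assump:hbsig}(i)) and the Lipschitz bound $\gamma_r$ on $h,b,\sigma$ in $x$ (Assumption \ref{assump:hbsig}(ii)) transfer to the required estimate in (a) without any $\lambda$-deterioration; the $(1+|X|)$-factor arises because $\nabla_x G_u$ contains $\nabla_x(\sigma\sigma^T)X$. The bound $|F_\lambda(0,0,0,x)|\leq \gamma_r$ follows from \eqref{v1} at $x=0$ together with the just-derived Lipschitz-in-$x$ estimate.

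For (d), taking $\omega_0$ as in the statement, the monotonicity and $\omega_0(0,\cdot,\cdot,\cdot)\equiv 0$ properties reduce to showing $F_\lambda\to F$ locally uniformly. On a bounded subset of $(X,p,x)$, Assumption \ref{assump:hbsig}(v) truncates the $u$-integration to a compact set $\calU\cap[-N,N]^l$ at arbitrarily small error, uniform in $\lambda\in(0,1)$. On that compact set, the classical Laplace-principle two-sided estimate
\[
\sup_u G_u+\lambda\ln\bigl(m\{G_u>\sup_u G_u-\delta\}\bigr)-\delta\;\leq\;\lambda\ln\int e^{G_u/\lambda}du\;\leq\;\sup_u G_u+\lambda\ln m\bigl([-N,N]^l\bigr)
\]
yields convergence at rate $O(\lambda\ln(1/\lambda))$, so $\omega_0\to 0$ locally uniformly as $\lambda\to 0^+$; continuity of $\omega_0$ in its arguments is standard from the local continuity of $F$ and $F_\lambda$.

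With (a)--(d) and concavity in $X$ verified uniformly in $\lambda\in(0,1)$, the theorem follows by direct appeal to the earlier results: Theorem \ref{T.2.7}(i) produces the unique solutions $v,v_\lambda$ of sub-quadratic growth and the bound claimed in conclusion (ii); the $\mathcal{C}^{2,\alpha}$ regularity (i) is Theorem \ref{T.2.7}(iii) combined with Lemma \ref{T.1.5}; and the local uniform convergence (iii) is Theorem \ref{T.2.7}(ii). The main technical hurdle is the uniform-in-$\lambda$ verification of (a): differentiation through the log-sum-exp must not amplify bounds as $\lambda\to 0^+$, and this hinges on recognizing each derivative as a probability average against the Gibbs measure $\pi^*$, so that pointwise bounds on $h,b,\sigma$ and their $x$-derivatives transfer to the same bounds on the derivatives of $F_\lambda$ with no $\lambda$-dependence.
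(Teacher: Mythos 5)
Your proposal is correct and follows essentially the same route as the paper: verify assumptions (a)--(d) and concavity in $X$ uniformly in $\lambda$, then invoke Theorem \ref{T.2.7} and Lemma \ref{T.1.5}. The only substantive difference is cosmetic: where the paper establishes concavity of $F_\lambda$ in $X$ by explicitly computing $\partial^2 F_\lambda/\partial X_{ij}\partial X_{kl}$ and applying H\"older's inequality, you invoke the standard fact that the log-sum-exp of functions affine in $X$ is convex -- the two arguments are equivalent, and your Gibbs-average observation for verifying (a) uniformly in $\lambda$ is a correct elaboration of a step the paper dismisses as ``direct to check.''
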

\begin{proof}
It is direct to check that Assumption \ref{assump:hbsig} implies assumptions (a)--(c).
To see (d), note that if $\calU $ is a bounded set, $F_\lambda(X,p,s,x)\to F(X,p,s,x)$ locally uniformly in $X,p,s,x$ as $\lambda\to 0^+$ since $h(x,u),b(x,u),\sigma(x,u)$ are locally uniformly continuous in $u$ and uniformly continuous in $x$. If $\calU$ is unbounded, we use \eqref{v2} to get the convergence.

\quad Clearly the operator $F$ is concave in $X$ according to Definition \ref{df4}. Now we show that $F_\lambda$ is also concave in $X$. Let us write, for any fixed $p,x$,
\[
(a_{ij})=(a_{ij}(u)):=\sigma(x,u)\sigma(x,u)^T,\quad
g=g(X,u):=h(x,u)+b(x,u)p+\tr(\sigma(x,u)\sigma(x,u)^T X),
\]
and $G=G(X,u):=\exp(\lambda^{-1}g(X,u))$.
Then
\[
\frac{\partial g(X,u)}{\partial{X_{ij}}}=a_{ij}, \quad \frac{\partial^2g(X,u)}{\partial{X_{ij}}\partial{X_{kl}}}=0.
\]
Direct computation yields that for any $N=(N_{ij})\in\calS^d$,
\begin{align*}
-\frac{\partial^2 F_1(X,p,s,x)}{\partial X_{ij}\partial X_{kl}}N_{ij}N_{kl}&=\sum_{ij,kl}\frac{\left(\int_{\calU} G\,du\right)\left(\int_{\calU} a_{ij}a_{kl}G\,du\right)-\left(\int_{\calU} a_{ij} G\,du\right)\left(\int_{\calU} a_{kl}G\,du\right)}{\lambda\left( \int_{\calU}  G\,du\right)^2}N_{ij}N_{kl}\\
&=\frac{\left(\int_{\calU} G\,du\right)\left(\int_{\calU} (\sum_{ij}a_{ij}N_{ij} )^2 G\,du\right)-\left(\sum_{ij}\left(\int_{\calU} a_{ij}N_{ij} G\,du\right)\right)^2}{\lambda\left( \int_{\calU}  G\,du\right)^2}\geq 0,
\end{align*}
where the last inequality is due to H\"{o}lder's inequality and $G>0$. Therefore $F_\lambda$ is concave in $X$.
All the conclusions now follow from Theorem \ref{T.2.7}.
\end{proof}

\quad One can derive a convergence rate for $v_{\lambda} \to v$ as $\lambda \to 0^{+}$ in the spirit of Theorem \ref{T.10}, but we chose not to present it  in the above theorem because its expression would be overly complex for the general case.
In the next section, we will derive a simple,  explicit rate for a special application case -- the  temperature control problem.

\quad So far we have focused our attention on the HJB equations. 
The connection to the control problems is stipulated in the following theorem.
\begin{theorem}
\label{thm:main0}
Consider the exploratory control problem \eqref{eq:expS}--\eqref{eq:regV} with the value function $v_\lambda$.
Let Assumption \ref{assump:hbsig} hold, and assume that the SDE \eqref{eq:optimalreglam} is well-posed.
Then $v_{\lambda}$ is the unique solution of sub-quadratic growth to the exploratory HJB equation \eqref{eq:regellipticPDE}.
Moreover, $v_{\lambda}$ is locally $\mathcal{C}^{2,\alpha}$ for some $\alpha \in (0,1)$, and
\[
v_{\lambda} \to v \quad \mbox{locally uniformly as } \lambda \to 0^+,
\]
where $v$ is the value function of the classical control problem \eqref{eq:classicalS}--\eqref{eq:classicalV} and the unique solution of sub-quadratic growth to the classical HJB equation \eqref{eq:HJBclassical}.
\end{theorem}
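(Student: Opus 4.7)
The plan is to identify both value functions with the unique sub-quadratic PDE solutions already produced by Theorem \ref{lem:generalkey}; once these identifications are secured, the locally uniform convergence $v_\lambda \to v$ falls out of Theorem \ref{lem:generalkey}(iii) at once. Write $\widetilde v_\lambda, \widetilde v$ for the PDE solutions of $F_\lambda(\nabla^2u, \nabla u, u, \cdot) = 0$ and $F(\nabla^2 u, \nabla u, u, \cdot) = 0$ respectively; by Theorem \ref{lem:generalkey} they are locally $\mathcal{C}^{2,\alpha}$ and satisfy $|\widetilde v_\lambda| + |\widetilde v| \leq C\gamma_{|\cdot|}$. The remaining task is thus to prove $v_\lambda = \widetilde v_\lambda$ and $v = \widetilde v$.

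For $v_\lambda = \widetilde v_\lambda$ I intend to run a two-sided verification argument. The Gibbs variational formula
\[
\lambda \ln \int_{\mathcal{U}} e^{g(u)/\lambda}\, du \;=\; \sup_{\pi \in \mathcal{P}(\mathcal{U})} \int_{\mathcal{U}} \bigl(g(u) - \lambda \ln \pi(u)\bigr) \pi(u)\, du,
\]
applied pointwise with $g(u) := h(x,u) + b(x,u) \cdot \nabla \widetilde v_\lambda(x) + \tfrac{1}{2}\tr(\sigma(x,u)\sigma(x,u)^T \nabla^2 \widetilde v_\lambda(x))$, recasts \eqref{eq:regellipticPDE} as the inequality $-\rho \widetilde v_\lambda(x) + \int_{\mathcal{U}}(g - \lambda \ln \pi)\pi\, du \leq 0$ for every $\pi \in \mathcal{P}(\mathcal{U})$, with equality at $\pi = \pi^*(\cdot, x)$ from \eqref{eq:regfeedbackC}. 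Fixing admissible $\pi \in \mathcal{A}(x)$ and applying It\^o's formula to $e^{-\rho t}\widetilde v_\lambda(X_t^\pi)$ on $[0,T]$ — legitimate by the local $\mathcal{C}^{2,\alpha}$ regularity — and using $\widetilde\sigma\widetilde\sigma^T = \int \sigma\sigma^T \pi\, du$ from \eqref{eq:tildebsig}, the previous inequality integrates to
\[
\widetilde v_\lambda(x) \;\geq\; \bbE\!\left[\int_0^T\! e^{-\rho t}\!\Bigl(\int_{\mathcal{U}} h(X_t^\pi,u)\pi_t(u)du - \lambda\!\int_{\mathcal{U}}\pi_t \ln \pi_t\, du\Bigr) dt + e^{-\rho T} \widetilde v_\lambda(X_T^\pi)\right].
\]
Sending $T \to \infty$ and discarding the terminal term (see below) produces $\widetilde v_\lambda \geq v_\lambda$; inserting $\pi_t^{\lambda, *} := \pi^*(\cdot, X_t^{\lambda,*})$, which is admissible thanks to the assumed well-posedness of \eqref{eq:optimalreglam}, turns the inequality into an equality and delivers $\widetilde v_\lambda \leq v_\lambda$. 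For the classical identification $v = \widetilde v$, I will either run the parallel verification using a measurable selector realising the supremum in \eqref{eq:HJBclassical} (to avoid requiring well-posedness of the classical optimal feedback), or, more cleanly, invoke the standard dynamic programming principle to conclude that $v$ is a viscosity solution of \eqref{eq:HJBclassical} and then apply the uniqueness of Theorem \ref{lem:generalkey}; Assumption \ref{assump:hbsig} together with Gr\"onwall gives $|v| \leq C\gamma_{|\cdot|}$, so $v$ is the sub-quadratic solution required by Theorem \ref{lem:generalkey}.

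The main technical obstacle is the vanishing of the terminal term $\bbE[e^{-\rho T}\widetilde v_\lambda(X_T^\pi)]$ under merely the admissibility of Definition \ref{def:admissible}. The only a priori information on $X^\pi$ comes from the sub-linear drift bound $|\widetilde b| \leq \underline{\gamma_{|\cdot|}}$ (with $\underline{\gamma_r}/r \to 0$ as entailed by \eqref{3222'}) and the uniform ellipticity and boundedness of $\widetilde\sigma \widetilde\sigma^T$, while $\widetilde v_\lambda$ is only sub-quadratic. The remedy is Young's inequality $2|x|\,\underline{\gamma_{|x|}} \leq \varepsilon |x|^2 + C_\varepsilon$ for arbitrarily small $\varepsilon > 0$; feeding it into the It\^o expansion of $|X_t^\pi|^2$ gives $\bbE |X_T^\pi|^2 \leq C_\varepsilon (1+T)e^{2\varepsilon T}$, and sub-quadratic interpolation then lets the discount $e^{-\rho T}$ kill $\bbE|\widetilde v_\lambda(X_T^\pi)|$ in the limit. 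The same moment estimate handles the classical analogue, completing the identifications, after which Theorem \ref{lem:generalkey}(iii) finishes the proof.
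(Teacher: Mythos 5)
Your proposal is correct and follows essentially the same route as the paper: identify $v_\lambda$ with the unique sub-quadratic solution of the exploratory HJB equation via a two-sided verification argument (the upper bound from the Gibbs variational inequality plus It\^o's formula over all admissible $\pi$, the lower bound from the feedback control $\pi^*(\cdot,X^{\lambda,*}_t)$ whose well-posedness is assumed), and then invoke Theorem \ref{lem:generalkey} for regularity, the growth bound, and the locally uniform convergence $v_\lambda \to v$. The paper compresses all of this into ``a standard verification argument''; your moment estimate killing the terminal term $e^{-\rho T}\mathbb{E}[\widetilde v_\lambda(X_T^\pi)]$ under merely sub-quadratic growth, and your DPP-plus-comparison identification of the classical value function (which avoids assuming well-posedness of the classical optimal feedback SDE), supply details the paper leaves implicit.
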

\begin{proof}
Under Assumption \ref{assump:hbsig}, let $v'_{\lambda}$ be the unique solution to \eqref{eq:regellipticPDE}.
According to Theorem \ref{lem:generalkey} {(ii)}, $v'_{\lambda}$ has polynomial growth.
By a standard verification argument, we have $v_{\lambda}(x) \le v'_{\lambda}(x)$ for all $x \in \mathbb{R}^d$.
Since \eqref{eq:optimalreglam} is well-posed, the equality is achieved by the relaxed control $\pi^{*}_t(\cdot) = \pi^*(\cdot, X^{\lambda,*}_t)$, namely, $v_{\lambda} \equiv v'_{\lambda}$.
The remaining of the theorem follows readily from Theorem \ref{lem:generalkey}.
\end{proof}

\quad Theorem \ref{thm:main0} indicates that the exploratory control problem \eqref{eq:expS}--\eqref{eq:regV} converges to
the classical stochastic control problem \eqref{eq:classicalS}--\eqref{eq:classicalV} as the weight parameter $\lambda \to 0^{+}$.
The technical assumption needed is that
the optimally controlled process $(X^{\lambda,*}_t, \, t \ge 0)$ defined by the SDE \eqref{eq:optimalreglam} is well-posed.
If $\gamma_r=C(1+r)$ for some $C>0$ in Assumption \ref{assump:hbsig},
then it is easy to see that $x \to \widetilde{b}(x, \pi^*(\cdot, x))$ is bounded and measurable,
and $x \to \widetilde{\sigma}(\pi^*(\cdot, x))$ is bounded, continuous and strictly elliptic.
Classical theory of \cite{SV79} then implies that  \eqref{eq:optimalreglam} is well-posed.
However, if $\eta \in (1,2)$, then $b(\cdot,\cdot)$ and $x \to \widetilde{b}(x, \pi^*(\cdot, x))$ are unbounded.
Now, if it is true that the solution $v_{\lambda}$ to \eqref{eq:regellipticPDE} is locally $\mathcal{C}^3$ (under additional assumptions on $h(\cdot,\cdot), b(\cdot, \cdot), \sigma(\cdot,\cdot)$),
then we have that the functions $x \to \widetilde{b}(x, \pi^*(\cdot, x))$ and $x \to \widetilde{\sigma}(\pi^*(\cdot, x))$ are locally Lipschitz.
In this case, \eqref{eq:optimalreglam} has a unique strong solution, hence well-posed, up to the explosion time
$\tau_{\infty}:=\lim_{k \to \infty} \inf\{t \ge 0: |X^{\lambda,*}_t| > k\}$.
Further non-explosion conditions (see e.g. \cite{MT930}) ensure that $\tau_{\infty} = \infty$ almost surely, leading to the well-posedness of \eqref{eq:optimalreglam}.

\section{Exploratory temperature control problem}
\label{s4}

\quad In this section we apply the general results obtained in the previous section to the exploratory temperature control problem.
This problem was formulated by \cite{GXZ20} for temperature control in the context of SA.
In Section \ref{s41}, we provide a brief background
on this problem.
A detailed analysis of the associated exploratory HJB equation is given in Section \ref{s42}.
There we derive an explicit convergence rate for the value function as the weight parameter tends to zero.
Finally in Section \ref{s43}, we study the steady state of the optimally controlled process of the problem.

\subsection{Exploratory temperature control problem}
\label{s41}
To design an endogenous temperature control for SA,
\cite{GXZ20} first consider the following stochastic control problem:
\begin{equation}
\label{eq:bang}
\begin{aligned}
v(x) := & \inf \, \mathbb{E}\bigg[\int_0^\infty e^{-\rho t} f(X_t) dt \bigg], \\
    & \text{ subject to } \mbox{the equation } \eqref{eq:SA} \mbox{ where}\\
    &\qquad \{\beta_t, \, t \ge 0\} \mbox{ is adapted}, \mbox{and } \beta_t \in \mathcal{U} \mbox{ a.e. } t \ge 0, \mbox{ a.s.}.
\end{aligned}
\end{equation}
Here, the temperature process $(\beta_t, \, t \ge 0)$ is taken as the control.
Following \cite{GXZ20}, we take the control space $\mathcal{U} = [a,1]$ for a fixed $a \in (0,1)$ throughout this section.
Note that the upper bound of $\mathcal{U}$ can be replaced by any positive number, while we require that the lower bound of $\mathcal{U}$ be away from $0$ to guarantee a minimal effort for exploration.

\quad By setting $\mathcal{U} = [a,1]$, $h(x,u) = f(x)$, $b(x,u) = -\nabla f(x)$, $\sigma(x,u) = \sqrt{2u}$, and substituting ``$\sup$'' with ``$\inf$'' in \eqref{eq:HJBclassical}, we obtain the classical HJB equation of the temperature control problem \eqref{eq:bang}:
\begin{equation}
\label{eq:HJBbang}
-\rho v(x) + f(x) - \nabla f(x) \cdot \nabla v(x)+ \inf_{\beta \in [a,1]} \left[\beta \tr(\nabla^2v(x)) \right] = 0.
\end{equation}
It is then easily seen from the verification theorem that an optimal feedback control has the bang-bang form:
$\beta^{*} = 1$ if $\tr(\nabla^2v(x)) < 0$, and $\beta^{*} = a$ if $\tr(\nabla^2v(x)) \ge 0$.
Using this temperature control scheme, one should switch between the highest temperature and the lowest one, depending on the sign of $\tr(\nabla^2v(x))$.
As mentioned in the introduction, there are two disadvantages, one in theory and the other in application,  of this bang-bang strategy:
\begin{enumerate}[itemsep = 3 pt]
\item
Although theoretically optimal, this strategy is practically too rigid to achieve good performance as it only
has two actions: $a \to 1$ and $1 \to a$ .
It is too sensitive to  errors which are inevitable in any real world application.
\item
The corresponding optimally controlled dynamics is governed by the SDE:
\begin{equation}
\label{eq:switchSDE}
dX^*_t = - \nabla f(X^*_t) dt + g(X^*_t) dB_t, \quad X^*_0 = x,
\end{equation}
where
\begin{equation}
\label{eq:switchfct}
g(x) :=
\left\{ \begin{array}{lcl}
\sqrt{2a} & \mbox{if } \tr(\nabla^2v(x)) \ge 0, \\
\sqrt{2} & \mbox{if } \tr(\nabla^2v(x))< 0. \\
\end{array}\right.
\end{equation}
There is a subtle issue regarding the well-posedness of the SDE \eqref{eq:switchSDE}.
Note that $g$ is bounded and strictly elliptic.
If $\nabla f$ is assumed to be bounded, it follows from Exercise 12.4.3 in \cite{SV79} that
 \eqref{eq:switchSDE} has a weak solution for all dimension $d$.
However, the uniqueness in distribution may fail since $g$ is {\it discontinuous} (see e.g. \cite{Saf99} for an example).
According to Exercises 7.3.3 and 7.3.4 in \cite{SV79}, the uniqueness holds for $d = 1,2$.
But it remains unknown whether the uniqueness in distribution is still valid for $d \ge 3$.

There has been some literature on the uniqueness in distribution of SDEs with discontinuous diffusion coefficients via the martingale problem;
see e.g. \cite{BP87,ST97,Krylov04}.
In these works, it is assumed that the set of discontinuity has some special geometric structure.
However, for the diffusion coefficient \eqref{eq:switchfct}, the set of discontinuity is determined by the sign of $\tr(\nabla^2v(x))$, which is much more complex.
By Theorem \ref{T.2.7} below, $\nabla^2 v$ is continuous so the set $\{\tr(\nabla^2v) > 0\}$ (resp. $\{\tr(\nabla^2v) < 0\}$) is open;
but this condition alone cannot guarantee the uniqueness in distribution of  \eqref{eq:switchSDE}.
\end{enumerate}

\quad To address the first disadvantage above, \cite{GXZ20}  introduce the exploratory version of \eqref{eq:bang} in order to smooth out the temperature process.
This way, a classical control $(\beta_t, \, t \ge 0)$  is replaced by a relaxed control $\pi = (\pi_t(\cdot), \, t \ge 0)$ over the control space $\mathcal{U}= [a,1]$, rendering the following exploratory dynamics:
\begin{equation}
\label{eq:Xpi46}
dX^\pi_t = - \nabla f(X^\pi_t)dt + \bigg(\int_{\mathcal{U}} 2u \pi_t(u)du \bigg)^{\frac{1}{2}} dB_t.
\end{equation}
The exploratory temperature control problem is to solve
\begin{equation}
\label{eq:explore}
v_{\lambda}(x) := \inf_{\pi \in \mathcal{A}(x)} \mathbb{E}\bigg[\int_0^\infty e^{-\rho t} f(X^{\pi}_t) dt - \lambda \int_0^{\infty}e^{- \rho t} \int_{\mathcal{U}} - \pi_t(u) \ln \pi_t(u) dudt\bigg],
\end{equation}
where $\mathcal{A}(x)$ is the set of admissible controls specified by Definition \ref{def:admissible}.

\quad By setting $h(x,u) = f(x)$, $b(x,u) = -\nabla f(x)$ and $\sigma(x,u) = \sqrt{2u}$, 
we get the corresponding exploratory HJB equation:
\begin{equation}
\label{eq:ellipticPDE}
-\rho v_{\lambda}(x)  + \nabla f(x) \cdot \nabla v_{\lambda}(x)+ f(x) - \lambda \ln \int_a^1 \exp\left(- \frac{\tr(\nabla^2 v_{\lambda}(x))}{\lambda}  u \right) du = 0.
\end{equation}
The corresponding optimal feedback control is
\begin{equation}
\label{eq:feedbackpi}
\pi^*(u;x) = \frac{\exp\left(- \frac{\tr(\nabla^2 v_{\lambda}(x))}{\lambda}  u \right)}{\int_a^1 \exp\left(- \frac{\tr (\nabla^2 v_{\lambda}(x))}{\lambda}  u \right) du}, \quad u \in [a,1],
\end{equation}
which yields the optimally controlled process governed by the SDE:
\begin{equation}
\label{eq:Xlambdastar}
dX^{\lambda,*}_t = - \nabla f(X^{\lambda,*}_t) dt + g_{\lambda}(X^{\lambda,*}_t) dB_t,
\end{equation}
where
\begin{equation}
\label{eq:glambda}
g_{\lambda}(x) = \sqrt{2 \frac{\int_a^1 u \exp\left(- \frac{\tr(\nabla^2 v_{\lambda}(x))}{\lambda}  u \right) du}{\int_a^1 \exp\left(- \frac{\tr (\nabla^2 v_{\lambda}(x))}{\lambda}  u \right) du}},
\end{equation}

Note that the diffusion coefficient, $g_{\lambda}$, is now {\it continuous}, and $\sqrt{2a} \le g_{\lambda}(\cdot) \le 2$.
If $\nabla f$ is assumed to be bounded, it follows from the classical theory of \cite{SV79} that  \eqref{eq:Xlambdastar} is well-posed.
This is in stark contrast with the controlled dynamics \eqref{eq:switchSDE} which is not necessarily well-posed. In summary, the optimal temperature control scheme of this exploratory formulation allows any level of temperature and renders a well-posed state process, thereby remedying {\it simultaneously} the two aforementioned disadvantages of the classical formulation.

\quad To study the equation \eqref{eq:ellipticPDE} and the process governed by \eqref{eq:Xlambdastar},
we make the following assumptions on the function $f$.
\begin{assumption}
\label{assump:1}
The function $f \in \mathcal{C}^2$ satisfies
\begin{enumerate}[itemsep = 3 pt]
\item[($i$)]
There exists a constant $C > 0$ such that
\begin{equation*}
|\nabla f(x)| \le C \quad \mbox{and} \quad |\nabla^2 f(x)| \le C(1+|x|) \quad \mbox{for all } x \in \mathbb{R}^d.
\end{equation*}
\item[($ii$)]
There exist $\chi > 0$ and $R > 0$ such that
\begin{equation*}
|\nabla f(x)|^2 - d |\nabla^2f(x)|_{\max} \ge \chi \quad \mbox{for } |x| \ge R.
\end{equation*}
\end{enumerate}
\end{assumption}

\quad Note that a combination of $(i)$ and $(ii)$ yields a linear growth of $f$.
These conditions, in fact, guarantee that both the value function $v_{\lambda}$ and the optimal state process  $X^{\lambda,*}$ have good properties.
We will see that  $(i)$ alone is sufficient for identifying the value function $v_{\lambda}$ as the solution to the HJB equation, and
 $(ii)$ is essentially a Lyapunov/Poincar\'e condition which ensures the convergence of  $X^{\lambda,*}$ as $\lambda\rightarrow 0^+$.



\subsection{Analysis of the exploratory HJB equation}
\label{s42}

In this subsection, we apply the results in Section \ref{s3} to study \eqref{eq:ellipticPDE}.
The corresponding operators are
\beq\lb{Op:lambda}
F_\lambda(X,p,s,x):=\rho s  -\nabla f(x) \cdot p-f(x) +\lambda \ln \int_a^1 \exp\left(- \frac{\tr X}{\lambda}  u \right) du,
\eeq
and
\begin{equation}\lb{Op:F}
F(X,p,s,x):=\rho s-\nabla f(x)\cdot p-f(x)-(a1_{\tr X>0}+1_{\tr X<0})\tr X.
\end{equation}

Specializing Assumption \ref{assump:hbsig} to $\mathcal{U} = [a,1]$, $h(x,u) = f(x)$, $b(x,u) = -\nabla f(x)$ and $\sigma(x,u) = \sqrt{2u}$ leads to  the following assumption on $f$.
\begin{assumption}
\label{assump:2}
Assume that $f \in \mathcal{C}^2(\bbR^d)$, and there are positive $\gamma_r,\underline{\gamma_r}\in \mathcal{C}^2(0,\infty)$ such that for each $r\geq 1$,
\[
 \sup_{|x|<r} (|f(x)|+|\nabla^2 f(x)|)\leq \gamma_r\quad \text{ and }\quad  \sup_{|x|<r}|\nabla f(x)|\leq \underline{\gamma_r},
\]
where $\gamma_r,\underline{\gamma_r}$ satisfy \eqref{3222} and \eqref{3222'}. 
\end{assumption}

\quad Assumption \ref{assump:2} basically demands  a sub-quadratic growth on $f$ and a sub-linear growth on $|\nabla f|$.
It is more general than Assumption \ref{assump:1}-$(i)$.
In particular, it recovers  Assumption \ref{assump:1}$-(i)$ when $\gamma_r =C(1+r)$.

\quad The following result is an easy corollary of Theorem \ref{lem:generalkey}.

\begin{corollary}\lb{T.2.7'}
Let $F, F_{\lambda}$ be defined by \eqref{Op:lambda}--\eqref{Op:F}, and Assumption \ref{assump:2} hold. Then
\begin{enumerate}[itemsep = 3 pt]
\item[(i)]
There exists a unique solution $v$ of sub-quadratic growth to
the equation $F(\nabla^2 v, \nabla v, v, x)= 0$,
and $v$ is locally uniformly $\mathcal{C}^{2,\alpha}$.
\item[(ii)]
For each $\lambda>0$, there exists a
unique solution $v_\lambda$ of sub-quadratic growth to
the equation $F_{\lambda}(\nabla^2 v_{\lambda}, \nabla v_{\lambda}, v_{\lambda}, x) = 0$,
and $v_\lambda$ is locally uniformly $\mathcal{C}^{2,\alpha}$.
\item[(iii)]
There exists $C\geq 1$ such that  for all $r\geq 1$,
\beq\lb{b2.1'}
\sup_{\lambda\in (0,1)}\sup_{x\in B_r}\left(|v(x)|+|v_\lambda(x)|\right)\leq C(1+\gamma_r),
\eeq
and, moreover,  $v_\lambda\to v$ locally uniformly as $\lambda\to 0^+$.
\end{enumerate}
\end{corollary}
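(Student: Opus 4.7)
The plan is to derive Corollary \ref{T.2.7'} as a direct application of Theorem \ref{T.2.7} (equivalently, Theorem \ref{lem:generalkey}) to the operators $F_\lambda, F$ of \eqref{Op:lambda}--\eqref{Op:F}. Since these building blocks are stated for any operator satisfying assumptions (a)--(d) and (for the $\mathcal{C}^{2,\alpha}$ conclusion) concavity-in-$X$, and make no distinction between sup- and inf-type origins, the cleanest route is to verify the hypotheses directly for $F_\lambda, F$ rather than to fit the inf-form of the temperature HJB equation into the sup-template of Section 3.4.

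Under Assumption \ref{assump:2}, verifying (a)--(c) is routine bookkeeping: the boundedness and Lipschitz bounds for the drift-and-reward parts of $F_\lambda, F$ follow from $\sup_{B_r}(|f|+|\nabla^2 f|) \le \gamma_r$ and $\sup_{B_r}|\nabla f| \le \underline{\gamma_r}$ (after, if needed, enlarging $\gamma_r$ by a constant factor to absorb $\underline{\gamma_r}$, which preserves \eqref{3222}); uniform ellipticity $\sigma\sigma^T = 2uI \in [2a,2]I$ gives (b) with $\Lambda_1 = 2a$, $\Lambda_2 = 2$; and the $\rho s$ terms give (c). For (d), compactness of $\mathcal{U}=[a,1]$ together with the Laplace asymptotics
\[
\lambda \ln \int_a^1 \exp(-u\,\tr X/\lambda)\,du \longrightarrow -\min_{u\in[a,1]} u\,\tr X \quad (\lambda \to 0^+)
\]
locally uniformly in $X$ yield $F_\lambda \to F$ locally uniformly with an explicit modulus $\omega_0$ built from this single integral. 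Concavity of $F_\lambda$ in $X$ (Definition \ref{df4}) is the exact H\"older-inequality computation already performed in the proof of Theorem \ref{lem:generalkey}; for $F$, the only non-linear-in-$X$ piece is $-\min(a\,\tr X,\tr X)$, a piecewise-linear convex function of $\tr X$, which satisfies Definition \ref{df4} in the distributional sense.

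With (a)--(d) and concavity in place, Theorem \ref{T.2.7}(i) produces the unique sub-quadratic solutions $v$ and $v_\lambda$ together with the growth bound \eqref{b2.1'}; Theorem \ref{T.2.7}(iii) upgrades their regularity to locally uniformly $\mathcal{C}^{2,\alpha}$; and Theorem \ref{T.2.7}(ii) delivers the locally uniform convergence $v_\lambda \to v$ as $\lambda \to 0^+$. The only point requiring more than a moment's attention is the sign/inf-versus-sup reconciliation, which the direct verification circumvents; everything else is an off-the-shelf invocation of the general PDE machinery of Section \ref{s3} (Perron plus comparison from Lemma \ref{L.cp}, interior regularity from Lemma \ref{T.1.5}, half-relaxed limits from Lemma \ref{lem stability}). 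I expect no substantive obstacle beyond this verification.
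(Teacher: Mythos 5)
Your proposal is correct and matches the paper's intent: the paper offers no separate argument for Corollary \ref{T.2.7'}, simply declaring it "an easy corollary of Theorem \ref{lem:generalkey}," and your direct verification of (a)--(d), the ellipticity constants $\Lambda_1=2a$, $\Lambda_2=2$, the Laplace asymptotics for $F_\lambda\to F$, and the convexity/concavity in $X$ of the inf-form operators is exactly the bookkeeping that declaration presupposes, followed by the same invocation of Theorem \ref{T.2.7}. Your explicit handling of the inf-versus-sup reconciliation (checking Definition \ref{df4} for $-\min(a\tr X,\tr X)$ and for $\lambda\ln\int_a^1 e^{-u\tr X/\lambda}\,du$ directly, rather than negating to fit the sup-template) is a point the paper leaves implicit, but it is the same machinery, not a different route.
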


\quad Next we apply Theorem \ref{T.10} to derive an explicit rate of convergence for $v_\lambda\to v$ as $\lambda\to 0^+$, by
 assuming  that Assumption \ref{assump:2} holds with the choice of $\gamma_r=C(1+r^\eta)$ for some $\eta\in[0,2)$.

\begin{lemma}\lb{L.1.6}
Let Assumption \ref{assump:2} hold with $\gamma_r=C(1+r^\eta)$ for some $\eta\in[0,2)$.
Then
\begin{enumerate}[itemsep = 3 pt]
\item[(i)]
$F$ and $F_{\lambda}$ satisfy the assumptions $(a)$--$(c)$ with $\gamma_r=C(1+r^\eta), $ and $\underline{\gamma_r}=C(1+r^{\eta-1})$.
\item[(ii)]
The assumption $(d)$ holds with
\beq\lb{c.1}
\omega_0(\lambda,x_1,x_2,x_3):=\omega_0(\lambda,x_1)=C\lambda+\lambda\ln(dx_1/\lambda )1_{dx_1> \lambda},
\eeq
where $d$ is the dimesion of the state space.
\end{enumerate}
\end{lemma}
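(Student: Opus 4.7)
For part (i), the structure of both $F_\lambda$ and $F$ isolates the dependence on $(p,s,x)$ from that on $X$: each has the form $\rho s - \nabla f(x)\cdot p - f(x) + G(\tr X)$, with $G = G_\lambda(T):=\lambda\ln\int_a^1 e^{-uT/\lambda}du$ or $G = G_0(T):=-\min(aT,T)=-\inf_{u\in[a,1]}uT$. The $G$ term cancels in any difference $F(X,p,s,x)-F(X,q,s,y)$, so assumption (a) reduces to estimating $-\nabla f(x)\cdot p + \nabla f(y)\cdot q + f(y)-f(x)$, which I would split as $-\nabla f(x)\cdot(p-q) + (\nabla f(y)-\nabla f(x))\cdot q + (f(y)-f(x))$ and bound by $\underline{\gamma_r}|p-q| + \gamma_r|x-y|(1+|q|)$ using $|\nabla f|\leq \underline{\gamma_r}$ and the Lipschitz constant $\gamma_r$ of both $f$ and $\nabla f$ on $B_r$. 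The bound $|F(0,0,0,x)|=|f(x)|\leq\gamma_r$ is immediate, and continuity of $G_0$ at $T=0$ holds because both branches of $\min(aT,T)$ agree there. For (b), observe $G_0'(T)\in\{a,1\}$ a.e., while $-G_\lambda'(T)=\int_a^1 u\,\mu_T(du)\in[a,1]$ for the Gibbs density $\mu_T(u)\propto e^{-uT/\lambda}$; so for any $P\succeq 0$ and with $\Lambda_1=a,\Lambda_2=1$, monotonicity yields $a\,\tr P \leq G(\tr(X{+}P))-G(\tr X)\leq \tr P$, which gives (b) after flipping signs. Assumption (c) is immediate from the $\rho s$ term.

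For part (ii), the $(p,s,x)$-pieces cancel in $F_\lambda-F$, so writing $T:=\tr X$ and $H(T):=\inf_{u\in[a,1]}uT$, one has
\[
F_\lambda(X,p,s,x)-F(X,p,s,x) \;=\; \lambda\ln\int_a^1 e^{-uT/\lambda}du \;+\; H(T) \;=\; \lambda\ln\int_a^1 e^{(H(T)-uT)/\lambda}\,du.
\]
The integrand is $\leq 1$ everywhere (since $uT\geq H(T)$) and $=1$ at the minimizer, so $F_\lambda-F\leq \lambda\ln(1-a)\leq 0$; hence $|F_\lambda-F|=-\lambda\ln\int_a^1 e^{(H(T)-uT)/\lambda}du$.

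The remaining task is to bound this from above by $C\lambda + \lambda\ln(d x_1/\lambda)\,\mathbf{1}_{dx_1>\lambda}$, which I would do by splitting on $|T|$ versus $\lambda$. For $|T|\leq \lambda$, the exponent $(H(T)-uT)/\lambda$ lies in $[-(1-a),0]$, so the integrand is $\geq e^{-(1-a)}$ and the integral is $\geq (1-a)e^{-(1-a)}$; this yields $|F_\lambda-F|\leq C\lambda$. For $|T|>\lambda$, I would perform the linear substitution $v=(u-u^*)T/\lambda$ (with $u^*=a$ when $T>0$ and $u^*=1$ when $T<0$) to obtain, in both cases,
\[
\int_a^1 e^{(H(T)-uT)/\lambda}\,du \;=\; \frac{\lambda}{|T|}\bigl(1-e^{-(1-a)|T|/\lambda}\bigr) \;\geq\; \frac{\lambda}{|T|}\bigl(1-e^{-(1-a)}\bigr).
\]
Taking $-\lambda\ln$ gives $|F_\lambda-F|\leq \lambda\ln(|T|/\lambda) + C'\lambda$ with $C':=-\ln(1-e^{-(1-a)})$. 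Since $|T|\leq d|X|\leq dx_1$ and $|T|>\lambda$ implies $dx_1>\lambda$, we obtain the claimed bound.

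The main technical point is not difficult here: it is really just the case analysis around $|T|\sim\lambda$, where the naive $\lambda\ln(|T|/\lambda)$ estimate degenerates; the substitution $v=(u-u^*)T/\lambda$ makes the integral explicit and separates a logarithmic term in $|T|/\lambda$ from an $O(\lambda)$ remainder. Everything else is bookkeeping with the definitions of $F,F_\lambda$ and the fact that the difference depends on $X$ only through $\tr X$, which is exactly why $\omega_0$ ends up independent of $x_2,x_3$.
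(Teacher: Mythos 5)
Your proposal is correct and follows essentially the same route as the paper: for (ii) the paper likewise reduces $F_\lambda-F$ to the explicit scalar quantity $\lambda\ln\bigl[z^{-1}(1-e^{-z(1-a)})\bigr]$ with $z=\tr X/\lambda$ (your substitution $v=(u-u^*)T/\lambda$ produces exactly this expression), splits at $|z|=1$, and concludes via $|\tr X|\le d|X|$. For (i) the paper simply defers to the verification in Theorem \ref{lem:generalkey}, and your direct check (with $\Lambda_1=a$, $\Lambda_2=1$ after the sign flip you mention) is the same computation spelled out.
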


\begin{proof}
The proof of $(i)$ is the same as the one of Theorem \ref{lem:generalkey}, in which the expression of $\underline{\gamma_r}$ follows from \eqref{3222'}.
The proof of $(ii)$ follows from direct computations, and we will  prove \eqref{c.1} for  the case when $z:=\tr X/\lambda>0$ the other case being similar. Notice that
\[
A_\lambda:=F_\lambda(X,p,s,x)-F(X,p,s,x)=\lambda\ln\left[ z^{-1}\left(1-e^{-z(1-a)}\right)\right].
\]
If $z\geq 1$ we have
\[
z^{-1}\left(1-e^{-z(1-a)}\right)\in \left[z^{-1}(1-e^{-1+a}),z^{-1}\right],
\]
and if $z\in (0,1)$ we have
\[
z^{-1}\left(1-e^{-z(1-a)}\right)\in [1-e^{-1+a},1-a].
\]
Therefore
\[
|A_\lambda|\leq C\lambda+\lambda\ln(z)1_{z>1},
\]
and the conclusion follows since $d|X|\geq |\tr X|$.
\end{proof}

\quad In the following lemma, we present a point-wise bound of $|\nabla v| $ and $|\nabla v_\lambda|$.

\begin{lemma}\lb{L.reg}
Let Assumption \ref{assump:2} hold with $\gamma_r=C(1+r^\eta)$ for some $\eta\in[0,2)$.
Then there exists $C>0$ such that for any $r\geq 1$ we have
\[
\sup_{\lambda\in (0,1)}\sup_{x\in B_r}\left( |\nabla v(x)|+|\nabla v_\lambda(x)|\right)\leq C r^{\alpha}\quad\text{ where }\alpha:=\max\{2\eta-1,\eta\}.
\]
\end{lemma}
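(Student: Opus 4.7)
The approach is a rescaling argument combined with interior Pucci-type gradient estimates: the exponent $\alpha=\max\{2\eta-1,\eta\}$ emerges from scaling bookkeeping once matters are reduced to uniform estimates on the unit ball.

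Fix $x_0\in B_r$ with $r\geq 1$, and set $R:=\min\{1,r^{1-\eta}\}$, $\mu:=cr^{-\eta}$ for a small $c>0$. Consider the rescaled function $\tilde v(y):=\mu\, v(x_0+Ry)$ on $B_1$; since $B_R(x_0)\subseteq B_{2r}$, $\tilde v$ is well-defined. These choices are made precisely so that, after rescaling, the drift $R\,\nabla f(x_0+Ry)$ has $L^\infty$ norm $\leq CRr^{\eta-1}\leq C$ (by Assumption \ref{assump:2}), the source $\mu R^2 f(x_0+Ry)$ has $L^\infty$ norm $\leq C\mu R^2 r^\eta\leq C$, and $\|\tilde v\|_{L^\infty(B_1)}\leq \mu\cdot Cr^\eta\leq C$ (using Corollary \ref{T.2.7'}), all uniformly in $r\geq 1$ and $\lambda\in(0,1)$. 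Exploiting the $1$-homogeneity in $X$ of the classical nonlinear term $-(a1_{\tr X>0}+1_{\tr X<0})\tr X$, and the identity
\[
\lambda\ln\int_a^1\exp\!\left(-\frac{u\,\tr X}{\mu R^2\lambda}\right)du=(\mu R^2)^{-1}\tilde\lambda\ln\int_a^1\exp\!\left(-\frac{u\,\tr X}{\tilde\lambda}\right)du,\qquad \tilde\lambda:=\mu R^2\lambda\in(0,1),
\]
for the exploratory operator, a direct computation shows that $\tilde v$ satisfies an equation of the form $G_{\mathrm{res}}(\nabla^2\tilde v)=b(y)\cdot\nabla\tilde v+g(y)$ on $B_1$, where $G_{\mathrm{res}}$ is uniformly elliptic with the same constants $\Lambda_1,\Lambda_2$, concave in $X$, and $\|b\|_{L^\infty(B_1)}+\|g\|_{L^\infty(B_1)}\leq C$.

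Since $G_{\mathrm{res}}$ is uniformly elliptic, $\tilde v$ verifies the Pucci bracketing inequalities $\mathcal{M}^-(\nabla^2\tilde v)\leq C|\nabla\tilde v|+C$ and $\mathcal{M}^+(\nabla^2\tilde v)\geq -C|\nabla\tilde v|-C$ on $B_1$. The classical Krylov--Safonov--Caffarelli interior gradient estimate for such inequalities (see \cite{CC95}) then yields $|\nabla\tilde v(0)|\leq C'$, where $C'$ depends only on $d,\Lambda_1,\Lambda_2$ and the universal bounds above. Translating back via $\nabla v(x_0)=(\mu R)^{-1}\nabla\tilde v(0)$: if $\eta\leq 1$ then $R=1$ and $\mu R=cr^{-\eta}$, so $|\nabla v(x_0)|\leq Cr^\eta$; if $\eta>1$ then $R=r^{1-\eta}$ and $\mu R=cr^{1-2\eta}$, so $|\nabla v(x_0)|\leq Cr^{2\eta-1}$. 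In both cases $|\nabla v(x_0)|\leq Cr^{\max\{\eta,2\eta-1\}}=Cr^\alpha$. The identical argument bounds $|\nabla v_\lambda(x_0)|$ uniformly in $\lambda\in(0,1)$, since $\tilde\lambda=\mu R^2\lambda\in(0,1)$ keeps all constants uniform.

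The main obstacle is that Lemma \ref{T.1.5} as stated absorbs the spatial Lipschitz constants $\gamma_r$ of the operator, whereas after rescaling the drift $R\nabla f(x_0+R\,\cdot)$ has Lipschitz norm in $y$ of order $R^2\|\nabla^2 f\|_{L^\infty(B_{2r})}$, generally unbounded in $r$. To avoid this loss one must invoke the Pucci-form interior gradient estimate, which only requires $L^\infty$ control of the drift and source and holds for merely measurable coefficients---a standard result in the fully nonlinear elliptic theory. Once this is in place, the rest of the argument is routine scaling bookkeeping.
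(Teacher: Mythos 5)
Your argument is essentially the paper's: the paper performs the single global rescaling $u(x):=r^{-\eta}v(r^{-\gamma}x)$ with $\gamma:=\max\{\eta-1,0\}$ on the large ball $B_{2r^{1+\gamma}}$, checks that the rescaled drift and source are bounded in $L^\infty$ uniformly in $r$ and $\lambda$, and invokes an interior gradient estimate before scaling back; your pointwise rescaling to $B_1$ around each $x_0$ with $R=\min\{1,r^{1-\eta}\}$, $\mu=cr^{-\eta}$ is the same computation in different coordinates, and your exponent bookkeeping $(\mu R)^{-1}\sim r^{\max\{\eta,2\eta-1\}}$ matches the paper's $\eta+\gamma=\alpha$. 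You also correctly identify the one genuine subtlety, namely that Lemma \ref{T.1.5} cannot be applied after rescaling because the Lipschitz constant of the rescaled drift blows up in $r$, so one needs a gradient estimate requiring only $L^\infty$ control of the lower-order data.

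One caveat on the justification of that key estimate: membership in the Pucci class, i.e.\ $\mathcal{M}^-(\nabla^2\tilde v)\leq C|\nabla\tilde v|+C$ and $\mathcal{M}^+(\nabla^2\tilde v)\geq -C|\nabla\tilde v|-C$ together with an $L^\infty$ bound, does \emph{not} by itself yield an interior gradient bound --- the class $S^*$ only gives interior $C^\alpha$ regularity by Krylov--Safonov (linear nondivergence equations with measurable coefficients already show Lipschitz continuity can fail). What saves the argument is that your rescaled equation has the specific form $G_{\mathrm{res}}(\nabla^2\tilde v)=b(y)\cdot\nabla\tilde v+g(y)$ with $G_{\mathrm{res}}$ uniformly elliptic, \emph{independent of $y$} and concave, and $b,g\in L^\infty$; for such equations the interior Lipschitz (indeed $C^{1,\alpha}$) estimate depending only on $\|\tilde v\|_{L^\infty}$, $\|b\|_{L^\infty}$, $\|g\|_{L^\infty}$ and the ellipticity constants does hold. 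This is exactly the estimate the paper invokes (Theorem 2.1 of \cite{lian2020pointwise}, see also Theorem 2.1 of \cite{swiech}), and it is the correct citation to replace your appeal to the bare Pucci inequalities. With that substitution your proof is complete.
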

\begin{proof}
We will only prove for $v$, and that for $ v_\lambda$ is identical because $F_\lambda,\lambda>0$, have uniformly elliptic second order terms, while the lower order terms are the same as $F$.

\quad Fix  $r\geq 1$, and let
$
u(x):=r^{-\eta}{v(r^{-\gamma} x)}$ with $\gamma:=\max\{\eta-1,0\}$.
According to Corollary \ref{T.2.7'}, $u$ is uniformly bounded in $B_{2r^{1+\gamma}}$, and it satisfies
\[
\rho' u-b(x)\cdot \nabla u-c(x)-(a1_{\Delta u>0}+1_{\Delta u<0})\Delta u=0,
\]
where
\[
\rho':=\rho r^{-2\gamma},\quad b(x):=r^{-\gamma}(\nabla f)(r^{-\gamma}x),\quad c(x):=r^{-2\gamma-\eta}f(r^{-\gamma}x).
\]
Thus, by the assumption of the lemma and $\gamma\geq \eta-1$, we have for some $C>0$,
\[
\sup_{r\geq 1}\sup_{x\in B_{2r^{1+\gamma}}}\left(|b(x)|+|c(x)|\right)\leq C.
\]
This allows us to apply Theorem 2.1 in \cite{lian2020pointwise}  (see also Theorem 2.1 in \cite{swiech}) to conclude that $\sup_{x\in B_{r^{1+\gamma}}} |\nabla u(x)|\leq C$ for some $C$ independent of $r$, completing the proof.
\end{proof}


\quad Finally, we state the convergence rate result, the proof of which follows from Theorem \ref{T.10}, Lemma \ref{L.1.6} and Lemma \ref{L.reg}.

\begin{theorem}\label{thm:cvrate}
Let $F, F_{\lambda}$ be defined by \eqref{Op:lambda}--\eqref{Op:F}, and Assumption \ref{assump:2} hold with $\gamma_r=C(1+r^\eta)$ for some $\eta\in[0,2)$.
Also let $v_\lambda$ (resp. $v$) be the unique solution of sub-quadratic growth to the equation $F_{\lambda}(\nabla^2 v_{\lambda}, \nabla v_{\lambda}, v_{\lambda}, x) = 0$ (resp. $F(\nabla^2 v, \nabla v, v, x) = 0$).
Then there exists $C>0$ such that for all $\lambda\in (0,1)$ and $r\geq 1$ we have
\[
\sup_{x\in B_r}|v_\lambda(x)-v(x)|\leq C\lambda+C\lambda \ln(r/\lambda)+Cr^{-c}.
\]
with $c:=1+\min\left\{{(1-\eta)(4-\eta)}/{2},0\right\}$.
\end{theorem}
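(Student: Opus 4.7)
The plan is to simply feed the outputs of Lemma \ref{L.1.6} and Lemma \ref{L.reg} into Theorem \ref{T.10} and then simplify the resulting expression.

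First, by Lemma \ref{L.1.6}, the operators $F$ and $F_\lambda$ (with $\lambda \in (0,1)$) satisfy assumptions $(a)$--$(c)$ uniformly with $\gamma_r=C(1+r^{\eta})$ and $\underline{\gamma_r}=C(1+r^{\eta-1})$, and they satisfy $(d)$ with the modulus $\omega_0(\lambda, x_1) = C\lambda + \lambda \ln(dx_1/\lambda)\mathbf{1}_{dx_1 > \lambda}$ depending only on the first matrix-norm variable. By Lemma \ref{L.reg}, the hypothesis \eqref{222} of Theorem \ref{T.10} holds with $\alpha := \max\{2\eta-1, \eta\}$. Thus Theorem \ref{T.10} applies and yields, for $R = Ar$,
\[
\sup_{x\in B_r}|v_\lambda(x)-v(x)| \leq \rho^{-1}\omega_0(\lambda, R^{c_1}, R^{c_2}, R^{c_3}) + CR^{-c_4},
\]
where $c_1, c_2, c_3, c_4$ are the exponents defined in Theorem \ref{T.10}.

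Next I would identify the exponent $c_4$ with the constant $c$ in the statement. Since $\eps = (2-\eta)/2$, one has $1+\eps = (4-\eta)/2$, so $c_4 = 1 + \min\{(1-\eta)(1+\eps), 0\} = 1 + \min\{(1-\eta)(4-\eta)/2, 0\} = c$. Since only the first slot of $\omega_0$ is effective (by Lemma \ref{L.1.6}), the first term reduces to
\[
\rho^{-1}\left[C\lambda + \lambda \ln\!\bigl(dA^{c_1} r^{c_1}/\lambda\bigr)\mathbf{1}_{dA^{c_1} r^{c_1}>\lambda}\right].
\]

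Finally I would simplify the logarithmic term. Writing $\ln(dA^{c_1}r^{c_1}/\lambda) = \ln(dA^{c_1}) + c_1 \ln r - \ln\lambda$, and using that for $\lambda \in (0,1)$ we have $-\lambda \ln\lambda$ bounded by a universal constant, and that for $r \geq 1$ the inequality $\lambda \ln r \leq \lambda \ln(r/\lambda)$ holds, the logarithmic term is bounded by $C\lambda + C\lambda \ln(r/\lambda)$, absorbing all constant factors into $C$. Combining with the polynomial error $CR^{-c_4} = C'r^{-c}$ gives exactly the claimed bound.

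Since all three ingredients are already in hand, there is essentially no conceptual obstacle here; the step requiring the most care is the bookkeeping, making sure that the explicit choice $\alpha = \max\{2\eta-1, \eta\}$ from Lemma \ref{L.reg} together with $\eps = (2-\eta)/2$ produces the stated constant $c = 1+\min\{(1-\eta)(4-\eta)/2, 0\}$, and that the dependence on $r$ inside the logarithm can be rewritten in the form $\lambda\ln(r/\lambda)$ using only the restrictions $\lambda\in(0,1)$ and $r\ge 1$.
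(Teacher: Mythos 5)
Your proposal is correct and follows exactly the route the paper takes: the paper itself states that Theorem \ref{thm:cvrate} "follows from Theorem \ref{T.10}, Lemma \ref{L.1.6} and Lemma \ref{L.reg}" without further detail, and your bookkeeping (identifying $c_4$ with $c$ via $1+\eps=(4-\eta)/2$, and absorbing the $r^{c_1}$ inside the logarithm into $C\lambda+C\lambda\ln(r/\lambda)$ using $\lambda\in(0,1)$, $r\ge1$) supplies precisely the omitted verification.
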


\quad Combining Theorem \ref{thm:main0} and Theorem \ref{thm:cvrate},
we get the following result characterizing the value function of the exploratory temperature control problem and its convergence.

\begin{corollary}
\label{cor:verification}
Consider the exploratory temperature control problem \eqref{eq:Xpi46}--\eqref{eq:explore} with value function $v_{\lambda}$.
Let Assumption \ref{assump:1}(i) hold.
Then $v_{\lambda}$ is the unique solution of sub-quadratic growth to the exploratory HJB equation \eqref{eq:ellipticPDE}.
Moreover, $v_{\lambda}$ is locally $\mathcal{C}^{2,\alpha}$ for some $\alpha \in (0,1)$, and
there exists $C>0$ such that for all $\lambda\in (0,1)$ and $r\geq 1$,
\begin{equation}
\label{eq:lambdaln}
\sup_{x\in B_r}|v_\lambda(x)-v(x)|\leq C\lambda+C\lambda \ln(r/\lambda)+Cr^{-1},
\end{equation}
where $v$ is the unique solution of sub-quadratic growth to the classical HJB equation \eqref{eq:HJBbang}.
\end{corollary}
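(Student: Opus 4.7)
The plan is to reduce Corollary \ref{cor:verification} to a mechanical application of Theorem \ref{thm:main0} (which gives the variational characterization, uniqueness, and regularity of $v_\lambda$) together with Theorem \ref{thm:cvrate} (which supplies the explicit rate), after checking that their hypotheses follow from Assumption \ref{assump:1}(i). Both hypothesis sets concern the data $h(x,u)=f(x)$, $b(x,u)=-\nabla f(x)$, $\sigma(x,u)=\sqrt{2u}$ with $\mathcal{U}=[a,1]$, and the linear growth of $f$ forced by $|\nabla f|\le C$ suggests the natural choices $\gamma_r=C(1+r)$ (so $\eta=1$) and $\underline{\gamma_r}=C$.

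First I would verify Assumption \ref{assump:hbsig} with these choices: (i) $|h|=|f|\le C(1+|x|)\le\gamma_r$ on $B_r$ by linear growth of $f$, and $|b|=|\nabla f|\le C\le\underline{\gamma_r}$; (ii) $h(\cdot,u)$ and $b(\cdot,u)$ are Lipschitz on $B_r$ with constants controlled by $\|\nabla f\|_\infty=C$ and by $\sup_{B_r}|\nabla^2 f|\le C(1+r)\le\gamma_r$ respectively, while $\sigma$ is constant in $x$; (iii) $\sigma\sigma^T=2uI$, so $2aI\le\sigma\sigma^T\le 2I$; (iv) continuity is immediate; (v) is trivial because $\mathcal{U}=[a,1]$ is bounded. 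One also checks \eqref{3222} and \eqref{3222'} for $\gamma_r=C(1+r)$, $\underline{\gamma_r}=C$ by direct computation. The same choices give Assumption \ref{assump:2} with $\eta=1$.

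Next comes the well-posedness of the optimally controlled SDE \eqref{eq:Xlambdastar}, the remaining hypothesis of Theorem \ref{thm:main0}. This is precisely the case discussed in the paragraph following Theorem \ref{thm:main0}: the drift $-\nabla f$ is bounded and measurable, while the diffusion coefficient $g_\lambda$ from \eqref{eq:glambda} is continuous with $\sqrt{2a}\le g_\lambda\le 2$, so classical martingale problem theory \cite{SV79} yields a weak solution unique in law. Theorem \ref{thm:main0} then identifies $v_\lambda$ as the unique sub-quadratic solution to \eqref{eq:ellipticPDE} and shows that $v_\lambda$ is locally $\mathcal{C}^{2,\alpha}$.

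Finally I would invoke Theorem \ref{thm:cvrate} with $\eta=1$. In this case $(1-\eta)(4-\eta)/2=0$, so the exponent becomes $c=1+\min\{0,0\}=1$, and the stated rate collapses exactly to $C\lambda+C\lambda\ln(r/\lambda)+Cr^{-1}$, which is \eqref{eq:lambdaln}. No step looks genuinely difficult: the whole argument is bookkeeping confirming that Assumption \ref{assump:1}(i) is the $\eta=1$ specialization of the general hypothesis framework developed in Section \ref{s3}, and all the analytical heavy lifting has already been done in Theorems \ref{thm:main0} and \ref{thm:cvrate}.
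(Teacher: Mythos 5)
Your proposal is correct and follows exactly the paper's route: the paper proves Corollary \ref{cor:verification} by combining Theorem \ref{thm:main0} (well-posedness, identification of the value function, and $\mathcal{C}^{2,\alpha}$ regularity, using that $-\nabla f$ is bounded and $g_\lambda$ is continuous and uniformly elliptic so that \eqref{eq:Xlambdastar} is well-posed) with Theorem \ref{thm:cvrate} specialized to $\gamma_r=C(1+r)$, i.e.\ $\eta=1$, which gives $c=1$ and the rate \eqref{eq:lambdaln}. Your verification of Assumptions \ref{assump:hbsig} and \ref{assump:2} and the exponent bookkeeping match the paper's (largely implicit) argument.
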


\quad Because the constant $C>0$ in \eqref{eq:lambdaln} is independent of $\lambda\in (0,1)$ and $r\geq 1$, we can minimize the right hand side of \eqref{eq:lambdaln} with respect to $r$ to get  $r_{\min} = \lambda^{-1}>1$.
With $r_{\min}$, \eqref{eq:lambdaln} reduces to
\begin{equation}
\label{eq:lambdaln2}
\sup_{x\in B_{1/\lambda}}|v_\lambda(x)-v(x)|\leq 2 C\lambda+ 2 C\lambda \ln(1/\lambda).
\end{equation}
Note that for many real-world optimization problems,  one can (and probably {\it should}) restrict herself to a bounded set -- however large it might be --  containing all the ``important'' states.
Thus when  $\lambda$ is sufficiently small, the ball of radius $1/\lambda$ contains these states of interest, and the leading term on the right hand side of \eqref{eq:lambdaln2} is $\lambda \ln(1/\lambda)$. Therefore, the estimate \eqref{eq:lambdaln2} essentially stipulates  that $v_{\lambda}$ converges to $v$ at the rate of $\lambda \ln(1/\lambda)$ as $\lambda \to 0^{+}$.


\subsection{Optimally controlled state process}
\label{s43}
In this subsection we consider the long time behavior of the optimal state process \eqref{eq:Xlambdastar} of the exploratory temperature control problem.

\quad We start by recalling some basics in stochastic stability.
Consider the general diffusion process $X=(X_t, \, t \ge 0)$ in $\mathbb{R}^d$ of form:
\begin{equation}
\label{eq:diffusion}
dX_t = b(X_t) dt + \sigma(X_t) dB_t, \quad X_0 = x,
\end{equation}
where $b: \mathbb{R}^d \to \mathbb{R}^d$ is the drift, and $\sigma:  \mathbb{R}^d \to \mathbb{R}^{d \times d}$ is the diffusion (or covariance) matrix.
Assuming that  \eqref{eq:diffusion} is well-posed,
let $\mathcal{L}$ be the infinitesimal generator of the diffusion process $X$ defined by
\begin{equation}
\label{eq:infg}
\begin{aligned}
\mathcal{L}\psi(x) = \sum_{i=1}^d b_i(x) \frac{\partial}{\partial x_i} \psi(x) + \frac{1}{2} \sum_{i,j = 1}^d \left( \sigma(x) \sigma(x)^T \right)_{ij} \frac{\partial^2}{\partial x_i \partial x_j} \psi(x),
\end{aligned}
\end{equation}
and $\mathcal{L}^{*}$ be the corresponding adjoint operator given by
\begin{equation}
\begin{aligned}
\label{eq:adjoint}
\mathcal{L}^{*}\psi(x) = -\sum_{i = 1}^d \frac{\partial}{\partial x_i} (b_i(x) \psi(x)) + \frac{1}{2} \sum_{i,j = 1}^d \frac{\partial^2}{\partial x_i \partial x_j} (\sigma(x) \sigma(x)^T \psi(x))_{ij},
\end{aligned}
\end{equation}
where $\psi: \mathbb{R}^d \to \mathbb{R}$ is a suitably smooth test function.
The probability density $\rho_t(\cdot)$ of the process $X$ at time $t$ then satisfies the Fokker-Planck equation:
 \begin{equation}
 \label{eq:FPdiffusion}
 \frac{\partial \rho_t}{\partial t} = \mathcal{L}^{*} \rho_t.
 \end{equation}
It is not always true that $\rho_t(\cdot)$ converges as $t \to \infty$ to a probability measure.
But if $b$ and $\sigma$ satisfy some growth conditions, it can be shown that as $t\to \infty$,
$\rho_t(\cdot)$ converges in total variation distance to $\rho(\cdot)$  which is the stationary distribution (or steady state) of $X$.
 It is then easily deduced from \eqref{eq:FPdiffusion} that $\rho$ is characterized by the equation
$\mathcal{L}^{*} \rho = 0$.
For instance, the overdamped Langevin equation with $b(x) = - \nabla f(x)$ and $\sigma(x) = \sqrt{2 \beta} \, I$ is time-reversible, and the stationary distribution, under some growth condition on $f$, is the Gibbs measure
\begin{equation}
\label{eq:Gibbs}
\mathcal{G}_{\beta}(dx):= \frac{1}{Z_{\beta}} \exp \left(-\frac{f(x)}{\beta} \right)dx,
\end{equation}
where $Z_{\beta}:= \int_{\mathbb{R}^d} \exp(-f(x)/\beta)dx$ is the normalizing constant.
However, for general $b$ and $\sigma$, the stationary distribution $\rho(\cdot)$ may not have a closed-form expression.
The standard references for stability of diffusion processes are \cite{EK86, MT93, MT930}.
We record a result on the ergodicity of diffusion processes.
\begin{lemma}
 \label{lem:convstation}
 Assume that $b:\mathbb{R}^d \to \mathbb{R}^d$ is bounded, and $\sigma: \mathbb{R}^d \to \mathbb{R}^{d \times d}$ is bounded and strictly elliptic,
 and that there exists $0 <\alpha \le 1$ such that $b, \sigma$ are locally uniformly $\alpha$-H\"older continuous, i.e.
 for each $R > 0$ there is a constant $C_{R} > 0$ such that
\begin{equation}
\label{eq:partialLip}
|b(x) - b(y)| + |\sigma(x) - \sigma(y)| < C_{R} |x-y|^{\alpha} \quad \mbox{for all } x,y \in B_R.
\end{equation}
Then  \eqref{eq:diffusion} is well-posed, i.e. it has a weak solution which is unique in distribution.
Assume further that there exist $M_1 >0$, $M_2 < \infty$, a compact set $C \subset \mathbb{R}^d$, and a function $V: \mathbb{R}^d \rightarrow [1,\infty)$ with $V(x) \to \infty$ as $|x| \to \infty$ such that
 \begin{equation}
 \label{eq:Lyapbound}
 \mathcal{L}V \leq -M_1+M_2 1_{C},
 \end{equation}
Then the (unique) distribution of the solution to \eqref{eq:diffusion} converges in total variation distance to its unique stationary distribution as $t \to \infty$.
 \end{lemma}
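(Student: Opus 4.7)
The plan is to separate the lemma into its two assertions and invoke classical machinery for each. For the well-posedness of \eqref{eq:diffusion}, since $\sigma$ is bounded, continuous and strictly elliptic and $b$ is bounded and continuous, I would appeal directly to the Stroock--Varadhan theory (see e.g.\ Theorems 6.1.7 and 7.2.1 in \cite{SV79}): the associated martingale problem is well-posed, which yields a weak solution that is unique in distribution. The local H\"older continuity assumed in \eqref{eq:partialLip} is much stronger than what is needed at this stage; it is instead exploited in the second half of the proof to obtain smoothness and positivity of transition densities.

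For the convergence in total variation, I would invoke the Meyn--Tweedie framework \cite{MT93,MT930}. The first step is to establish the strong Feller property and topological irreducibility of the Markov semigroup $(P_t)_{t\ge 0}$ of \eqref{eq:diffusion}. Since $\sigma\sigma^T\ge \Lambda I$ and $b,\sigma$ are bounded and locally $\alpha$-H\"older continuous, interior Schauder/parabolic estimates applied to the Kolmogorov forward and backward equations produce a jointly continuous transition density $p_t(x,y)$ for every $t>0$; strict positivity $p_t(x,y)>0$ then follows from a Girsanov comparison with Brownian motion (alternatively, the Stroock--Varadhan support theorem). These two facts give that $P_t\varphi$ is continuous for every bounded measurable $\varphi$ and that $P_t(x,O)>0$ for every nonempty open $O$ and every $x\in\mathbb{R}^d$.

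Next I would pass to the discrete skeleton chain $(X_{nh})_{n\ge 0}$ for an arbitrary fixed $h>0$. Joint continuity of $p_h$ together with strict positivity yields a minorization of the form $P_h(x,\cdot)\ge \epsilon\,\mu(\cdot)$, uniformly as $x$ ranges over any compact set, for some nontrivial subprobability measure $\mu$; this is precisely the petite-set condition of \cite{MT930}, so every compact subset of $\mathbb{R}^d$ is petite for $(X_{nh})$. Combined with the Foster--Lyapunov drift condition \eqref{eq:Lyapbound} transported to the skeleton, we are exactly in the setting of \cite[Theorems 4.1 and 6.1]{MT93}: the chain is positive Harris recurrent, admits a unique invariant probability measure $\rho$, and satisfies $\|P_t(x,\cdot)-\rho\|_{TV}\to 0$ as $t\to\infty$ for every $x$.

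The main obstacle I expect is the interface between the continuous-time infinitesimal drift inequality $\mathcal{L}V\le -M_1+M_2\mathbf{1}_C$ and the discrete-time Foster condition that the skeleton-chain theorems actually consume. The standard remedy is Dynkin's formula: integrating the inequality up to a suitable stopping time (e.g.\ the first exit from a sublevel set of $V$) and applying optional sampling converts the infinitesimal estimate into a drift inequality for $P_h V$ of the form $P_h V\le V-c+c' \mathbf{1}_{C'}$ on an enlarged compact set $C'$. Care is needed here to ensure the relevant expectations are finite -- one typically first approximates $V$ by $V\wedge k$, applies Dynkin, and then passes to the limit using Fatou and $V\ge 1$. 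Once this bridge between continuous and discrete time is installed, all remaining steps are direct citations to the Meyn--Tweedie toolkit.
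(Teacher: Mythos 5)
Your proposal is correct and lives in the same Meyn--Tweedie/Foster--Lyapunov world as the paper's proof, but the execution differs in where the work is done. The paper's proof is almost entirely by citation: well-posedness from \cite{SV79}; then it feeds the \emph{continuous-time} drift condition \eqref{eq:Lyapbound} directly into continuous-time ergodicity theorems (Theorems 2.1 and 2.2 of \cite{Tang19}, which package the results of \cite{MT930}), which require only that the set $C$ be petite for the continuous-time process; petiteness of compact sets is then obtained not by an explicit minorization but by observing that the process is Lebesgue-irreducible and a $T$-process (Theorem 2.1 of \cite{ST97}, using \eqref{eq:partialLip}), whence every compact set is petite by Theorem 4.1 of \cite{MT93}. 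You instead rebuild both ingredients by hand: a local Doeblin minorization $P_h(x,\cdot)\ge\epsilon\,\mu(\cdot)$ for the skeleton chain via positivity and joint continuity of the transition density, and a Dynkin-formula bridge converting $\mathcal{L}V\le -M_1+M_2 1_C$ into a discrete-time Foster condition. Both routes work; yours is more self-contained and makes the mechanism visible (and your flagged concern about the continuous-to-discrete interface is real but resolvable exactly as you describe, e.g.\ via Aronson's Gaussian upper bound to show $\int_0^h P_s(x,C)\,ds\to 0$ as $|x|\to\infty$, plus a localization of Dynkin's formula by exit times since $V$ need not be in the domain of the generator globally). What the paper's route buys is precisely the avoidance of that interface: the continuous-time theorems consume the generator-level inequality as stated, and the $T$-process argument sidesteps any need to prove the strong Feller property or construct a minorizing measure explicitly.
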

 \begin{proof}
The fact that the diffusion process \eqref{eq:diffusion} is well-posed follows from Theorem 6.2 in \cite{SV79}.
Recall that a Borel set $C \subset \mathbb{R}^d$ is called petite if there exist a distribution $q$ on $\mathbb{R}_{+}$ and a nonzero Borel measure $\nu$ on $\mathbb{R}^d$ such that $\int_0^{\infty} \mathbb{P}_x(X_t \in A) \, q(dt) \ge \nu(A)$ for all $x \in C$ and all Borel sets $A \subset \mathbb{R}^d$.
Under the condition \eqref{eq:Lyapbound} with  a petite set $C$, Theorems 2.1 and 2.2 in \cite{Tang19} imply that
the diffusion process is positive Harris recurrent, and  converges in total variation distance to its unique stationary distribution.
Further by Theorem 2.1 in \cite{ST97}, the diffusion process is a Lebesgue irreducible (and $T$-) process.
However,  according to Theorem 4.1 in \cite{MT93}, each compact set is petite, which concludes the proof.
 \end{proof}

\quad The following theorem describes  the long time behavior of the optimal state process \eqref{eq:Xlambdastar} of the exploratory temperature control problem \eqref{eq:Xpi46}--\eqref{eq:explore}.
Recall that $||\cdot||_{TV}$ denotes the total variation distance between probability measures.
\begin{theorem}
\label{thm:stationary}
Let Assumption \ref{assump:1} hold.
Then we have:
\begin{enumerate}[itemsep = 3 pt]
\item[(i)]
For each $\lambda > 0$, the process $(X_t^{\lambda,*}, \, t \ge 0)$ converges in total variation distance to its unique stationary distribution as $t \to \infty$.
\item[(ii)]
For each $\lambda > 0$, let $\rho_{\lambda}$ be the stationary distribution of the process $(X_t^{\lambda,*}, \, t \ge 0)$.
Fix $\theta >0$ and $\delta>0$.
Then there exists $c > 0$ such that
\begin{equation*}
\rho_{\lambda}(\{x : |x-\theta| > \delta\}) > c \quad \mbox{for all } \lambda > 0.
\end{equation*}
Consequently, $(X_t^{\lambda,*}, \, t \ge 0)$ does not converge in probability to any $\theta \in \mathbb{R}^d$ (and in particular to $\argmin f(x)$).
\item[(iii)]
Let $\mathcal{G}_{\beta}$, $\beta > 0$, be the Gibbs measure of the form \eqref{eq:Gibbs}.
Then for each $\lambda > 0$, $\rho_{\lambda} \ne \mathcal{G}_{\beta}$ for any $\beta > 0$. Moreover, there exists $c > 0$ such that
\begin{equation*}
||\rho_{\lambda} - \mathcal{G}_{\beta}||_{TV} > c \quad \mbox{for all } \beta > 0.
\end{equation*}
\end{enumerate}
\end{theorem}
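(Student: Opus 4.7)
For (i), I would verify the hypotheses of Lemma \ref{lem:convstation} for the SDE \eqref{eq:Xlambdastar}: the drift $-\nabla f$ is bounded by Assumption \ref{assump:1}(i); the diffusion $g_\lambda$ in \eqref{eq:glambda} lies in $[\sqrt{2a},\sqrt{2}]$ and is strictly elliptic; and the local $\mathcal{C}^{2,\alpha}$ regularity of $v_\lambda$ (Corollary \ref{T.2.7'}), together with Assumption \ref{assump:1}(i), yields the local H\"older regularity of $\nabla f$ and of $g_\lambda$ (since $g_\lambda$ is a smooth function of $\Delta v_\lambda/\lambda$). For the Lyapunov function I would take $V=f+C_0$, compute $\mathcal{L}V=-|\nabla f|^2+(g_\lambda^2/2)\Delta f$, and combine $g_\lambda^2/2\le 1$ and $|\Delta f|\le d|\nabla^2 f|_{\max}$ with Assumption \ref{assump:1}(ii) to obtain $\mathcal{L}V\le -\chi$ on $\{|x|\ge R\}$; Lemma \ref{lem:convstation} then delivers the TV-convergence to the unique $\rho_\lambda$.

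For (ii), the plan is to prove the stronger uniform-in-$\lambda$ lower bound $\rho_\lambda(A)\ge c|A|$ for Borel $A$ inside any prescribed ball. Two ingredients combine: the coefficients of \eqref{eq:Xlambdastar} are bounded and uniformly elliptic with constants independent of $\lambda$, so Aronson's estimate produces $p_1^\lambda(x,y)\ge c_K>0$ uniform in $\lambda$ for $x,y$ in any fixed compact $K$; and since the Lyapunov function from (i) is $\lambda$-free, tightness is uniform in $\lambda$, providing $R_0$ with $\rho_\lambda(B_{R_0})\ge 1/2$ for all $\lambda$. Writing $\rho_\lambda(A)=\int\rho_\lambda(dx)\,P_1^\lambda(x,A)$ and restricting $x$ to $B_{R_0}$ yields the claimed bound. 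Given $\theta,\delta>0$, choosing $A=B_\delta(\theta')$ with $|\theta'-\theta|=2\delta$ places $A$ inside $\{|x-\theta|>\delta\}$ and finishes the argument.

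For (iii), assume toward contradiction that $\rho_\lambda=\mathcal{G}_\beta$. Since the Langevin generator at temperature $\beta$ annihilates $\mathcal{G}_\beta$ and the generator difference is $\tfrac12(g_\lambda^2-2\beta)\Delta$, one gets $\Delta\bigl((g_\lambda^2-2\beta)\mathcal{G}_\beta\bigr)=0$; the bracket is bounded times $\mathcal{G}_\beta$, hence integrable, and an integrable harmonic function on $\mathbb{R}^d$ vanishes identically (by the mean value property applied over expanding balls), so $g_\lambda^2\equiv 2\beta$. The strict monotonicity of \eqref{eq:glambda} in $\Delta v_\lambda/\lambda$ then forces $\Delta v_\lambda\equiv c$ for some constant $c$; sub-quadratic growth of $v_\lambda$ rules out $c\ne 0$ (else $v_\lambda-c|x|^2/(2d)$ is harmonic with at most quadratic growth, producing an exact quadratic term in $v_\lambda$), and Liouville for sub-quadratic harmonic functions gives $v_\lambda=p\cdot x+q$ affine. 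Substituting into \eqref{eq:ellipticPDE}, then taking the gradient and the divergence, yields the transport equation $\Delta f+p\cdot\nabla(\Delta f)=0$. Assumption \ref{assump:1}(i)--(ii) together bound $|\nabla^2 f|$ globally, so $\Delta f$ is bounded; the characteristic solution $\Delta f(x_0+tp)=\Delta f(x_0)e^{-t}$ and sending $t\to-\infty$ force $\Delta f\equiv 0$, so $f$ is harmonic with bounded gradient, hence affine by Liouville. But then $\int e^{-f/\beta}dx=\infty$, contradicting $\mathcal{G}_\beta$ being a probability measure.

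The uniform TV lower bound follows by continuity and compactness. The map $\beta\mapsto\mathcal{G}_\beta$ is TV-continuous on the open $\beta$-set $I$ on which $\mathcal{G}_\beta$ is a probability measure; part (ii) forces $\|\rho_\lambda-\mathcal{G}_\beta\|_{TV}\to 1$ as $\beta\to 0^+$ (since $\mathcal{G}_\beta$ concentrates on $\argmin f$ while $\rho_\lambda$ does not), and tightness of $\rho_\lambda$ combined with $\mathcal{G}_\beta(K)\to 0$ for fixed compact $K$ gives the same limit as $\beta\to\infty$; on compact sub-intervals of $I$ the strict positivity from the previous paragraph together with TV-continuity yields a positive minimum, and the three regimes glue into a single uniform $c>0$. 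The main technical obstacle I expect is the algebraic chain in (iii): $\rho_\lambda=\mathcal{G}_\beta\Rightarrow g_\lambda^2\equiv 2\beta\Rightarrow \Delta v_\lambda\equiv 0\Rightarrow f$ affine $\Rightarrow$ contradiction, which hinges delicately on combining both parts of Assumption \ref{assump:1}; the Lyapunov and Aronson steps in (i) and (ii) are comparatively routine.
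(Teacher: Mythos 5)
Your proposal follows essentially the same route as the paper on all three parts: Lemma \ref{lem:convstation} with a Lyapunov function built from $f$ for (i), Aronson's uniform-in-$\lambda$ Gaussian lower bounds for (ii), and the Fokker--Planck comparison plus Liouville to force $g_\lambda$ constant, followed by a contradiction with the HJB equation and the same three-regime compactness argument in $\beta$, for (iii). The only substantive difference is that you carry the endgame of (iii) further than the paper (which stops at asserting that $g_\lambda\equiv$ const.\ makes $v_\lambda$ independent of $f$); just note that your transport equation for $\Delta f$ uses third derivatives of $f$, which Assumption \ref{assump:1} does not provide --- running the same characteristic argument one level down on the bounded field $\nabla f$, via $\nabla^2 f\,p+\nabla f=\rho p$, closes this step with $f\in\mathcal{C}^2$ only.
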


\begin{proof}
($i$) Note that $X^{\lambda,*}$ is a diffusion process with $b(x) = -\nabla f(x)$ and $\sigma(x) = g_\lambda(x) I$.
It is clear that $b$ is bounded, and $\sigma$ is bounded and strictly elliptic.
By Assumption \ref{assump:1}-($ii$), $|\nabla^2 f|$ is bounded, and thus $b = -\nabla f$ satisfies the H\"older condition \eqref{eq:partialLip}.
By Corollary \ref{T.2.7'}, $v_{\lambda}$ is locally $\mathcal{C}^2$.
It follows that $g_{\lambda}$ is locally H\"older continuous, and so is $\sigma = g_\lambda I$.
It is easy to see that
\begin{equation*}
\mathcal{L}f(x) =- |\nabla f(x)|^2 + \frac{1}{2}\sum_{i = 1}^d g^2_{\lambda}(x) \frac{\partial^2 f}{\partial x_i^2}(x) \le -|\nabla f(x)|^2 + d|\nabla^2 f(x)|_{\max}.
\end{equation*}
By Assumption \ref{assump:1}-($ii$), the condition \eqref{eq:Lyapbound} is satisfied with $M_1 = \chi$ and $M_2 = \sup_{x \in B_R} \mathcal{L}f(x)$.
It suffices to apply Lemma \ref{lem:convstation} to conclude.

\quad ($ii$) This follows from the fact that $g_{\lambda}$ is bounded away from $0$.
We argue by contradiction that
$\inf_{\lambda > 0} \rho_{\lambda}(\{x : |x-\theta| > \delta\}) = 0$.
Then for $\varepsilon > 0$, there exists $\lambda > 0$ such that $\rho_{\lambda}(\{x : |x-\theta| > \delta\}) < \varepsilon$.
By part ($i$), $(X_t^{\lambda,*}, \, t \ge 0)$ converges in total variation distance to $\rho_{\lambda}$.
So for $t$ sufficiently large, we have
\begin{equation}
\label{eq:uppsmall}
\mathbb{P}(X_t^{\lambda,*}> \theta+ \delta) < 2 \varepsilon.
\end{equation}
On the other hand, $b = -\nabla f$ and $\sigma = g_{\lambda} I$ are H\"{o}lder continuous, and $\sigma \sigma^T \ge 2a I$ with $2a$ independent of $\lambda$.
By Aronson's comparison theorem (see \cite{Aronson67}),
\begin{equation}
\label{eq:lowlarge}
\mathbb{P}(X_t^{\lambda,*}> \theta+ \delta) \ge C \mathbb{P}(cB_t > \theta + \delta),
\end{equation}
where $c, C > 0$ are constants independent of $t$ and $\lambda$.
By taking $\varepsilon > 0$ to be arbitrarily small, the estimates \eqref{eq:uppsmall} and \eqref{eq:lowlarge} lead to a contradiction.

\quad ($iii$) We first prove that $\rho_{\lambda} \ne \mathcal{G}_{\beta}$ for any $\beta > 0$.
We argue by contradiction that $\rho_{\lambda} = \mathcal{G}_{\beta}$ for some $\beta > 0$.
Recall from \eqref{eq:adjoint} that the adjoint operator of the optimal controlled process is
$\mathcal{L}^{*}\psi(x) = - \sum_{i = 1}^d \frac{\partial}{\partial x_i}\bigg(\frac{\partial f}{\partial x_i}(x) \psi(x) \bigg) + \frac{1}{2} \sum_{i = 1}^d \frac{\partial^2}{\partial x_i^2}(g_{\lambda}(x) \psi(x))$
for $\psi: \mathbb{R}^d \to \mathbb{R}$.
Since $\mathcal{L}^{*} \rho_{\lambda} = 0$, we get
\begin{equation}
\label{eq:FPOCP1}
- \sum_{i = 1}^d \frac{\partial}{\partial x_i}\bigg(\frac{\partial f}{\partial x_i}(x) \rho_{\lambda}(x) \bigg) + \frac{1}{2} \sum_{i = 1}^d \frac{\partial^2}{\partial x_i^2}(g_{\lambda}(x) \rho_{\lambda}(x)) = 0.
\end{equation}
On the other hand, $\rho_{\lambda} = \mathcal{G}_{\beta}$ is the stationary distribution of the overdamped Langevin equation $dX_t = - \nabla f(X_t) dt + \sqrt{2 \beta} dB_t$; so it satisifies
\begin{equation}
\label{eq:FPOCP2}
- \sum_{i = 1}^d \frac{\partial}{\partial x_i}\bigg(\frac{\partial f}{\partial x_i}(x) \rho_{\lambda}(x) \bigg) + \frac{\beta}{2} \sum_{i = 1}^d \frac{\partial^2}{\partial x_i^2} \rho_{\lambda}(x) = 0.
\end{equation}
Comparing \eqref{eq:FPOCP1} and \eqref{eq:FPOCP2} yields
\begin{equation*}
\Delta (g_{\lambda} \rho_{\lambda} - \beta \rho_{\lambda}) = 0,
\end{equation*}
i.e. $g_{\lambda} \rho_{\lambda} - \beta \rho_{\lambda}$ is a harmonic function.
By Assumption \ref{assump:1}-$(ii)$, $f(x) \to  +\infty$ as $|x| \to \infty$.
Thus, $g_{\lambda} \rho_{\lambda} - \beta \rho_{\lambda} \to 0$ as $|x| \to \infty$.
According to Liouville's theorem, any bounded harmonic function is constant (see e.g. Theorem 8, Chapter 2 in \cite{Evansbook}).
So $g_{\lambda} \rho_{\lambda} - \beta \rho_{\lambda}  \equiv 0$, and hence $g_{\lambda} \equiv \beta$.
Injecting to \eqref{eq:glambda}, we see that $v_{\lambda}$ only depends on $a$, $\beta$ and $\lambda$.
This contradicts the HJB equation \eqref{eq:ellipticPDE} where $v_{\lambda} $ also depends on $f$.

\quad Now we prove that $\rho_{\lambda}$ is bounded away from {\it any} Gibbs measure $\mathcal{G}_{\beta}$.
We argue by contradiction that $\inf_{\beta > 0} ||\rho_{\lambda} - \mathcal{G}_{\beta}||_{TV} = 0$.
Then there exists a sequence $\{\beta_n\}_{n \ge 1}$ such that $||\rho_{\lambda} - \mathcal{G}_{\beta_n}||_{TV} \to 0$ as $n \to \infty$.
This is impossible if $\lim_{n \to \infty} \beta_n = \infty$, since $\mathcal{G}_{\beta}$ does not converge to a probability measure as $\beta \to \infty$.
Thus, we can extract a convergent subsequence $\{\beta'_n\}_{n \ge 1}$ from $\{\beta_n\}_{n \ge 1}$.
If $\lim_{n \to \infty} \beta'_n = \beta' > 0$, this implies that $\rho_{\lambda} = \mathcal{G}_{\beta'}$ which contradicts the fact that
$\rho_{\lambda} \ne \mathcal{G}_{\beta}$ for any $\beta > 0$.
If $\lim_{n \to \infty} \beta'_n = 0$, then $\rho_{\lambda}$ is concentrated on $\argmin f$, whose validity  is ruled out by part $(ii)$.
\end{proof}

\quad Theorem \ref{thm:stationary} indicates that, with a {\it fixed} level of exploration, the optimally controlled process $(X_t^{\lambda,*}, \, t \ge 0)$ does have a stationary distribution. This provides a theoretical justification to the SA algorithm devised by \cite{GXZ20} based on discretizing \eqref{eq:Xlambdastar}. The result that this stationary distribution is
not a Dirac mass on the minimizer of $f$ is expected theoretically because  \eqref{eq:Xlambdastar} is a genuine diffusion process due to its strict ellipticity.
It is indeed {\it preferred} from an exploration point of view because the essence of exploration is to involve as many states as possible instead of just focusing on the
single state of the minimizer, in the same spirit of the classical overdamped Langevin diffusion that
converges to the Gibbs measure instead of the Dirac one. The fact that the stationary distribution of \eqref{eq:Xlambdastar} is {\it not} a Gibbs measure is the most intriguing one; it suggests the possibility of a more variety of target measures -- beyond Gibbs measures -- when it comes to SA for non-convex optimization.


\quad Theorem \ref{thm:stationary} does not provide a convergence rate for $(X_t^{\lambda,*}, \, t \ge 0)$ to converge to its stationary distribution.
This is due to the assumption that $|\nabla f|$ is bounded, which is a sufficient condition for the well-posedness of the process $(X_t^{\lambda,*}, \, t \ge 0)$ in the verification argument.
If we can relax this  condition while  the process $(X_t^{\lambda,*}, \, t \ge 0)$ is still well-posed,
then more can be said about the convergence.
For instance, assume that $|\nabla^2 f|$ is bounded, $g_{\lambda}$ is locally Lipschitz, and
\begin{equation}
\label{eq:subgeo}
|\nabla f(x)|^2 \ge \phi(f(x)) \quad \mbox{for } |x| \mbox{ sufficiently large},
\end{equation}
where $\phi$ is a strictly concave function increasing to infinity (e.g. $\phi(s) = s^{\alpha}$ for some $0 < \alpha < 1$).
In this case, let $H_{\phi}(s) = \int_1^s \frac{ds}{\phi(s)}$.
Then there exists $C > 0$ such that \citep{BCG08}
\begin{equation*}
|| \mbox{Law}(X^{\lambda,*}_t) - \rho_{\lambda}||_{TV} \le \frac{C}{\phi \circ H^{-1}_{\phi}(t)}.
\end{equation*}
So $(X_t^{\lambda,*}, \, t \ge 0)$ converges to its stationary distribution with a sub-exponential rate.
If instead of \eqref{eq:subgeo} we assume that
\begin{equation}
\label{eq:expconv}
|\nabla f(x)|^2 \ge M f(x) \quad \mbox{for } |x| \mbox{ sufficiently large},
\end{equation}
for some $M > 0$, then there exist $c > 0$ and $C >0$ such that
\begin{equation*}
|| \mbox{Law}(X^{\lambda,*}_t) - \rho_{\lambda}||_{TV} \le C e^{-ct}.
\end{equation*}
That is,  $(X_t^{\lambda,*}, \, t \ge 0)$ converges exponentially to its stationary distribution.
See e.g. \cite{BCG08} for further discussions on the convergence rate of diffusion processes.
This means that if we can relax the well-posedness condition (e.g. removing the boundedness assumption on $|\nabla f|$ so that either \eqref{eq:subgeo} or \eqref{eq:expconv} is satisfied),
then we can derive a convergence rate for the optimally controlled process $(X_t^{\lambda,*}, \, t \ge 0)$ of the exploratory temperature control problem as $t\to \infty$.

\quad To conclude this subsection, we study the stability of stationary distributions of $(X_t^{\lambda,*}, \, t \ge 0)$ with different $\lambda$'s.
For a general analysis on  the stability of stationary distributions of diffusion processes with different drift and covariance coefficients,
see \cite{BKS14,BKS17,BRS18}.
The idea is to bound the total variation distance between stationary distributions in terms of diffusion parameters.
We recall a lemma which is due to \cite{BRS18}.
\begin{lemma}
\label{lem:BRSest}
Let $(b_1, \sigma_1)$ and $(b_2, \sigma_2)$ be pairs of drift and covariance coefficients associated with the diffusion process \eqref{eq:diffusion}.
For each $k= 1,2$, assume that $b_k$ is bounded and measurable, and $\sigma_k$ is bounded, strictly elliptic and globally Lipschitz.
Then the diffusion process associated with $(b_k, \sigma_k)$ has a unique stationary distribution $\rho_k(dx) = \rho_k(x) dx$.
For $1 \le i \le d$, let
\begin{equation*}
\phi_1^i := b_1^i - \sum_{j = 1}^d \frac{\partial}{\partial x_j}(\sigma_1 \sigma_1^T)_{ij} \quad \mbox{and}
\quad \phi_2^i := b_2^i - \sum_{j = 1}^d \frac{\partial}{\partial x_j}(\sigma_2 \sigma_2^T)_{ij},
\end{equation*}
and $\Phi := \frac{(\sigma_1 \sigma_1^T- \sigma_2 \sigma_2^T) \nabla \rho_2}{\rho_2} - (\phi_1 - \phi_2)$.
Assume further that there exist $\kappa > 0$, $M > 0$ and $R > 0$ such that
\begin{equation*}
b_1(x) \cdot x \le -M |x|^{\kappa}, \quad \mbox{for } |x| > R.
\end{equation*}
Then there exists $C > 0$ such that
\begin{equation*}
||\rho_{1} - \rho_2||_{TV} \le C \int_{\mathbb{R}^d} |\Phi(x)| \rho_2(dx).
\end{equation*}
\end{lemma}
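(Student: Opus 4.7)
The plan is to bound the total variation distance via duality together with the Poisson equation for the first diffusion. First, existence and uniqueness of the invariant measures $\rho_1$ and $\rho_2$ follow from Lemma \ref{lem:convstation}: bounded, strictly elliptic, locally H\"older $\sigma_k$ with bounded $b_k$ gives well-posedness, and for $\rho_1$ the dissipativity $b_1(x)\cdot x\le -M|x|^\kappa$ produces a Lyapunov function $V(x)=1+|x|^2$ satisfying the drift condition \eqref{eq:Lyapbound}. By Kantorovich duality,
\[
\|\rho_1-\rho_2\|_{TV}=\sup_{\|h\|_\infty\le 1,\,\int h\,d\rho_1=0}\Big|\int h\,d\rho_2\Big|,
\]
so it suffices to bound $|\int h\,d\rho_2|$ for a fixed centred $h$.

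Next, I would solve the Poisson equation $\calL_1 u=-h$ by setting
\[
u(x)=\int_0^\infty\bbE_x[h(X^1_t)]\,dt,
\]
where $X^1$ denotes the diffusion with coefficients $(b_1,\sigma_1)$. The dissipativity condition delivers an exponential ergodic estimate $\|\bbE_\cdot[h(X^1_t)]\|_\infty\le C e^{-ct}$, so $u$ is well-defined and bounded. Combining this decay with interior Schauder estimates (using bounded $b_1$, Lipschitz $\sigma_1$, strict ellipticity) yields a global gradient bound $\|\nabla u\|_\infty\le C\|h\|_\infty$.

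The core of the argument is then the identity
\[
\int h\,d\rho_2=\int(\calL_2-\calL_1)u\,d\rho_2,
\]
which uses $\calL_1 u=-h$ together with the invariance $\int \calL_2 u\,d\rho_2=0$. Expanding
\[
(\calL_2-\calL_1)u=(b_2-b_1)\cdot\nabla u+\tfrac12\tr\!\big((\sigma_2\sigma_2^T-\sigma_1\sigma_1^T)\nabla^2u\big)
\]
and integrating the second-order term by parts against $\rho_2\,dx$, the coefficient of $\nabla u$ in the resulting integrand is, by the very definitions of $\phi_k$ and $\Phi$, exactly $-\Phi(x)\rho_2(x)$ up to absorbed constants. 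Therefore $|\int h\,d\rho_2|\le\|\nabla u\|_\infty\int|\Phi|\,d\rho_2$, and taking the supremum over $h$ gives the claimed estimate.

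The main obstacle is the uniform gradient bound $\|\nabla u\|_\infty\le C$ on all of $\bbR^d$. Interior $\calC^{1,\alpha}$ estimates are standard, but propagating them globally requires controlling the semigroup derivative $\nabla P^1_t h$ uniformly in $x$, with sufficient integrability in $t$; this is typically done either via a Bismut--Elworthy--Li formula or via gradient bounds on the transition density of a uniformly elliptic diffusion with bounded drift, combined with the exponential ergodicity already used to bound $u$. A secondary technical point is justifying the vanishing of boundary terms in the integration by parts and the identity $\int\calL_k u\,d\rho_k=0$ for the non-compactly supported test function $u$; this is handled by a standard truncation/mollification argument, relying on the decay of $\rho_1,\rho_2$ and their derivatives afforded by the Lyapunov and ellipticity structure.
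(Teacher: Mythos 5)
First, for context: the paper does not prove this lemma at all --- it is quoted from \cite{BRS18} (``We recall a lemma which is due to \ldots''), so there is no internal argument to compare yours against. Your route --- total variation duality, the Poisson equation $\calL_1 u=-h$, the identity $\int h\,d\rho_2=\int(\calL_2-\calL_1)u\,d\rho_2$, and an integration by parts against $\rho_2\,dx$ to surface $\Phi$ --- is the standard Stein-method comparison of invariant measures, and it is a genuinely different strategy from the one in \cite{BRS18}, where the bound is obtained by observing that $\rho_2$ solves the first stationary Fokker--Planck--Kolmogorov equation up to a divergence-type perturbation $\mathrm{div}(\Phi\rho_2)$ and then invoking a general stability estimate for such perturbed stationary equations. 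Your integration by parts does reproduce the combination defining $\Phi$ (modulo the $1/2$ normalization of the second-order term, about which the paper's own statement is equally loose), so the skeleton is coherent.

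The genuine gap is the one you flag but then understate: the uniform bounds $\|u\|_\infty\le C\|h\|_\infty$ and $\|\nabla u\|_\infty\le C\|h\|_\infty$. The hypothesis $b_1(x)\cdot x\le -M|x|^\kappa$ with $b_1$ bounded forces $\kappa\le 1$, and the resulting Lyapunov estimate yields ergodicity of the form $\|P_t(x,\cdot)-\rho_1\|_{TV}\le C\,V(x)\,r(t)$ with a rate $r$ that is in general only subgeometric for $\kappa<1$ and, more importantly, with a prefactor that grows in $x$; there is no uniform-in-$x$ estimate $\|\bbE_{\cdot}[h(X^1_t)]\|_\infty\le Ce^{-ct}$ as you assert. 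Consequently $u(x)=\int_0^\infty\bbE_x[h(X^1_t)]\,dt$ is a priori only bounded by a multiple of the Lyapunov function, the interior Krylov--Safonov/Schauder estimates then give $|\nabla u(x)|\le CV(x)$ rather than a constant, and your final inequality degrades to $C\int|\Phi|\,V\,d\rho_2$, which is strictly weaker than the claimed $C\int|\Phi|\,d\rho_2$. Repairing this requires either a weighted argument that still lands on the unweighted $L^1(\rho_2)$ norm of $\Phi$, or switching to the PDE route of \cite{BRS18}. A minor additional point: the Poisson equation needs $h$ centered with respect to $\rho_1$, so the duality step should be written as the supremum of $|\int h\,d(\rho_1-\rho_2)|$ over $\|h\|_\infty\le 1$ and the centering imposed afterwards (harmless, since constants integrate identically against both probability measures), rather than as a constraint inside the supremum defining $\|\rho_1-\rho_2\|_{TV}$.
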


\begin{theorem}
\label{thm:stability}
Let Assumption \ref{assump:1} hold, and assume further that
there exist $\kappa > 0$, $M > 0$ and $R > 0$ such that
\begin{equation}
\label{eq:dissipative}
\nabla f(x) \cdot x \ge M |x|^{\kappa}, \quad \mbox{for } |x| \ge R,
\end{equation}
and that the solution $v_{\lambda}$ to  \eqref{eq:ellipticPDE} is $\mathcal{C}^3$ with bounded third derivatives.
For each $\lambda > 0$, let $\rho_{\lambda}(dx)$ be the stationary distribution of the optimal state process governed by \eqref{eq:Xlambdastar}.
Then
\begin{equation}
\lim_{\lambda' \to \lambda}||\rho_{\lambda'} - \rho_{\lambda}||_{TV} = 0.
\end{equation}
\end{theorem}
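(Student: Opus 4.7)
The plan is to apply Lemma \ref{lem:BRSest} with the two pairs $(b_1,\sigma_1)=(-\nabla f,\,g_{\lambda'}I)$ and $(b_2,\sigma_2)=(-\nabla f,\,g_{\lambda}I)$, and then let $\lambda'\to\lambda$. Since the drifts are equal, the auxiliary functions in the lemma satisfy $\phi_1^i-\phi_2^i=-\partial_{x_i}(g_{\lambda'}^2-g_\lambda^2)$, so
\[
\Phi(x)=(g_{\lambda'}^2-g_\lambda^2)(x)\,\frac{\nabla\rho_\lambda(x)}{\rho_\lambda(x)}+\nabla\bigl(g_{\lambda'}^2-g_\lambda^2\bigr)(x),
\]
and the lemma yields
\[
\|\rho_{\lambda'}-\rho_\lambda\|_{TV}\le C\int_{\mathbb{R}^d}\Bigl(|g_{\lambda'}^2-g_\lambda^2|\,|\nabla\rho_\lambda|+|\nabla(g_{\lambda'}^2-g_\lambda^2)|\,\rho_\lambda\Bigr)\,dx.
\]

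First I would verify the hypotheses of Lemma \ref{lem:BRSest}. Boundedness of $b_1=-\nabla f$ is Assumption \ref{assump:1}(i); the bounds $\sqrt{2a}\le g_{\lambda'}\le\sqrt{2}$ give boundedness and strict ellipticity of $\sigma_{\lambda'}$; and global Lipschitz continuity of $\sigma_{\lambda'}$ follows from the standing hypothesis that $v_{\lambda'}\in\mathcal{C}^3$ with bounded third derivatives, because $g_{\lambda'}$ is a smooth function of $\tr(\nabla^2 v_{\lambda'})$ via $z\mapsto\bigl(2\int_a^1 ue^{-zu}du/\int_a^1 e^{-zu}du\bigr)^{1/2}$, whose derivative is bounded. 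The dissipativity requirement on $b_1$ is exactly \eqref{eq:dissipative} written with a minus sign.

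Next I would show that $g_{\lambda'}^2\to g_\lambda^2$ in $\mathcal{C}^1_{\mathrm{loc}}$ as $\lambda'\to\lambda$. Comparison (Lemma \ref{L.cp}) and the stability Lemma \ref{lem stability}, applied as in the proof of Theorem \ref{T.2.7}(ii) but with $\lambda'\to\lambda>0$ rather than $\lambda'\to 0$, yield $v_{\lambda'}\to v_\lambda$ locally uniformly. The interior Schauder-type estimates in Lemma \ref{T.1.5}, together with the sub-quadratic bounds of Corollary \ref{T.2.7'}, upgrade this to $\mathcal{C}^{2,\alpha}_{\mathrm{loc}}$ convergence. Coupled with a uniform $\mathcal{C}^3$ bound on $v_{\lambda'}$ for $\lambda'$ in a neighborhood of $\lambda$ (which propagates from the assumed bound at $\lambda$ by a bootstrapping argument using uniform ellipticity), Arzel\`a--Ascoli then gives $\nabla^2 v_{\lambda'}\to\nabla^2 v_\lambda$ in $\mathcal{C}^1_{\mathrm{loc}}$, and hence the claimed $\mathcal{C}^1_{\mathrm{loc}}$ convergence of $g_{\lambda'}^2$.

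Finally I would pass to the limit in the bound for $\|\rho_{\lambda'}-\rho_\lambda\|_{TV}$. The second integrand is dominated by a constant multiple of $\rho_\lambda\in L^1$ and goes to $0$ pointwise, so dominated convergence gives convergence of that term. For the first term one observes that under \eqref{eq:dissipative} and uniform ellipticity of $g_\lambda^2 I$, standard Lyapunov arguments produce sub-exponential decay of $\rho_\lambda$, and elliptic regularity of the Fokker--Planck equation (the coefficients being Lipschitz and uniformly elliptic) transfers this decay to $\nabla\rho_\lambda$, giving $|\nabla\rho_\lambda|\in L^1(\mathbb{R}^d)$; a second application of dominated convergence finishes the proof. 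The main obstacle I anticipate is the $\mathcal{C}^1_{\mathrm{loc}}$-compactness of $\{\nabla^2 v_{\lambda'}\}$, since this requires $\mathcal{C}^3$ control that is uniform in a neighborhood of $\lambda$ rather than at the single value $\lambda$; a secondary, more technical point is establishing $L^1$ integrability of $\nabla\rho_\lambda$, where one may have to invoke Fisher-information or Bakry--\'Emery--type estimates instead of pure elliptic regularity.
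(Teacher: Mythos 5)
Your proposal follows the same overall route as the paper: apply Lemma \ref{lem:BRSest} with $b_1=b_2=-\nabla f$ and $\sigma_k=g_{\lambda^{(k)}}I$, verify its hypotheses from Assumption \ref{assump:1}, \eqref{eq:dissipative} and the assumed $\mathcal{C}^3$ bound on $v_\lambda$ (which gives global Lipschitz continuity of $g_\lambda$), and then pass to the limit in $\int|\Phi|\,\rho_\lambda\,dx$ by dominated convergence. Two points of comparison. First, your formula for $\Phi$ with the squared coefficients $g_{\lambda'}^2-g_\lambda^2$ is the correct literal application of the lemma (the paper writes $g_{\lambda'}-g_\lambda$, which is harmless since the two differ by a bounded factor), and your insistence on uniform $\mathcal{C}^3$ control in a neighborhood of $\lambda$ to get pointwise convergence of $\Phi$ and a uniform dominating function is a legitimate concern that the paper compresses into ``it is easy to see that $\Phi(x)\to 0$.'' Second, and this is the genuine divergence: for the term $\int|g_{\lambda'}^2-g_\lambda^2|\,|\nabla\rho_\lambda|\,dx$ you propose to prove $|\nabla\rho_\lambda|\in L^1$ via Lyapunov decay of $\rho_\lambda$ plus elliptic regularity of the Fokker--Planck equation, which is the least certain part of your argument (transferring pointwise decay of $\rho_\lambda$ to integrable decay of $\nabla\rho_\lambda$ is not automatic). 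The paper instead writes this term as $\int|g_{\lambda'}^2-g_\lambda^2|\,\frac{|\nabla\rho_\lambda|}{\rho_\lambda}\,\rho_\lambda\,dx$, applies Cauchy--Schwarz, and cites Theorem 3.1.2 of Bogachev--Krylov--R\"ockner--Shaposhnikov for finiteness of the relative Fisher information $\int|\nabla\rho_\lambda/\rho_\lambda|^2\,\rho_\lambda\,dx$ under bounded drift and bounded, Lipschitz, strictly elliptic diffusion --- exactly the ``Fisher-information-type estimate'' you flag as a possible fallback. Adopting that citation closes the one step of your argument that is not fully secured; everything else is sound.
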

\begin{proof}
We apply Lemma \ref{lem:BRSest} with $b_1(x) = b_2(x) = -\nabla f(x)$, and $\sigma_1(x) = g_{\lambda'}(x) I$, $\sigma_2(x) = g_{\lambda}(x)I$.
In this case,
\begin{equation*}
\Phi(x) = (g_{\lambda'}(x) - g_{\lambda}(x)) \frac{\nabla \rho_{\lambda}(x)}{\rho_{\lambda}(x)} + \nabla(g_{\lambda'} - g_{\lambda})(x).
\end{equation*}
It is easy to see that $\Phi(x) \to 0$ as $\lambda' \to \lambda$.
Since $v_{\lambda}$ has bounded third derivatives, we have $g_{\lambda}$ is globally Lipschitz.
Because $b_2 = -\nabla f$ is bounded and $\sigma_2 = g_{\lambda} I$ is bounded, Lipschitz and strict elliptic,
it follows from Theorem 3.1.2 in \cite{BKRS15} that
\begin{equation*}
\int_{\mathbb{R}^d } \bigg|\frac{\nabla \rho_{\lambda}(x)}{\rho_{\lambda}(x)}\bigg| \rho_\lambda(dx) \le \sqrt{\int_{\mathbb{R}^d } \bigg|\frac{\nabla \rho_{\lambda}(x)}{\rho_{\lambda}(x)}\bigg|^2 \rho_\lambda(dx)} < \infty.
\end{equation*}
By the dominated convergence theorem, we get $\int_{\mathbb{R}^d} |\Phi(x)| \rho_{\lambda}(dx) \to 0$ as $\lambda' \to \lambda$.
It suffices to apply Lemma \ref{lem:BRSest} to conclude.
\end{proof}

\quad The assumption \eqref{eq:dissipative} is a version of the dissipative condition, which is standard in Langevin sampling and optimization.
The assumption that $|\nabla f|$ is bounded restricts the range of the dissipative exponent $\kappa$ to $(0,1]$.
The only technical assumption in Theorem \ref{thm:stability} is that the solution $v_{\lambda}$ to the exploratory HJB equation \eqref{eq:ellipticPDE} is three times continuously differentiable with bounded third derivatives.
It implies that $\nabla^2 v_{\lambda}$ is continuously differentiable and is globally Lipschitz,
which is stronger than the result of Theorem \ref{T.2.7} that $\nabla^2 v$ is locally H\"older continuous.
It is interesting to know whether Assumption \ref{assump:1} (possibly with some additional conditions on $f$)
implies the boundedness of third derivatives of the solution to \eqref{eq:ellipticPDE}.

\section{Finite Time Horizon} 
\label{s5}

\quad The exploratory control problem \eqref{eq:expS}--\eqref{eq:regV} is a relaxed control problem in an infinite time horizon, and the associated exploratory HJB equation is, therefore,  elliptic.
Nevertheless, the arguments in the paper can be adapted, to the extent they can,  to the finite time setting where the HJB equation is parabolic.

\quad We follow the formulation in \cite{Zhou21}.
Fix $T > 0$, and consider the stochastic control problem whose value function  is
\begin{equation}
\label{eq:finiteclassical}
v(t,x) = \sup_{u \in \mathcal{A}_0(t,x)} \mathbb{E}\left[\int_t^T h_1(t,X^u_s, u_s) ds + h_2(X^u_T) \bigg| X^u_t = x\right],\;(t,x)\in [0,T]\times \mathbb{R}^d,
\end{equation}
where $h_1: [0,T]\times \mathbb{R}^d \times \mathcal{U} \to \mathbb{R}$ and $h_2: \mathbb{R}^d \to \mathbb{R}$ are reward functions, and $\mathcal{A}_0(t,x)$ is the set of admissible classical controls with respect to $X^u_t = x$. The state dynamics is
\begin{equation}
\label{eq:classicalS2}
dX^u_t = b(t,X^u_t, u_t) dt + \sigma(t,X^u_t, u_t) dB_t.
\end{equation}
Note here $b,\sigma,h_1$ depend on $t$ explicitly.

\quad Denote by $\partial_t$  the partial derivative  in $t$,  and by $\nabla_x$ and  $\nabla^2_x$ the gradient  and Hessian  in $x$ respectively.
The classical HJB equation associated with the problem \eqref{eq:finiteclassical}-- \eqref{eq:classicalS2} is
\begin{equation}
\label{eq:finiteHJB}
\left\{ \begin{array}{lcl}
\scaleto{\partial_t v(t,x) + \sup_{u \in \mathcal{U}} \left[h_1(t,x,u) + b(t,x,u) \cdot \nabla_x v(t,x) + \frac{1}{2}\tr(\sigma(t,x,u) \sigma(x,u)^T \nabla_x^2v(t,x))\right] = 0, \quad 0\leq t \le T,}{11 pt} \\
\scaleto{v(T,x) = h_2(x).}{9 pt}
\end{array}\right.
\end{equation}
It is known that a smooth solution to the HJB equation \eqref{eq:finiteHJB} gives the value function \eqref{eq:finiteclassical}.
The optimal control at time $t$ is $u_t^{*} = u^{*}(t,X^*_t)$, where $u^*:[0,T]\times \mathbb{R}^d \to \mathcal{U}$ is a deterministic mapping obtained by solving the ``$\sup_{u \in \mathcal{U}}$'' term in  \eqref{eq:finiteHJB}, and the optimally controlled process is governed by
\begin{equation}
\label{eq:optimalclassical2}
dX^{*}_t = b(t,X_t^{*}, u^{*}(t,X^{*}_t))dt +  \sigma(t,X_t^{*}, u^{*}(t,X^{*}_t))dB_t, \quad X_0^{*} = x,
\end{equation}
provided that it is well-posed.

\quad The exploratory control problem with finite time horizon is to solve an entropy-regularized relaxed control problem
whose value function is
\begin{equation}
\label{eq:finiteregV}
\scaleto{v_{\lambda}(t,x) = \sup_{\pi \in \mathcal{A}(t,x)} \mathbb{E}\bigg[ \int_t^T \bigg( \int_{\mathcal{U}} h_1(t,X^\pi_s, u) \pi_s(u) du - \lambda \int_{\mathcal{U}} \pi_s(u) \ln \pi_s(u) du  \bigg) ds + h_2(X^{\pi}_T)\bigg| X^\pi_t = x \bigg],
}{26 pt}
\end{equation}
where $\mathcal{A}(t,x)$ is the set of distributional control processes defined similarly to the infinite horizon setting, and the exploratory dynamics is
\begin{equation}
\label{eq:expS2}
dX^\pi_t = \widetilde{b}(t,X^\pi_t, \pi_t)dt +  \widetilde{\sigma}(t,X^\pi_t, \pi_t)dB_t,
\end{equation}
with
\begin{equation}
\label{eq:tildebsig2}
\widetilde{b}(t,x, \pi):=\int_{\mathcal{U}} b(t,x,u) \pi(u)du \quad \mbox{and} \quad \widetilde{\sigma}(t,x, \pi):=\bigg(\int_{\mathcal{U}} \sigma(t,x,u) \sigma(t,x,u)^T \pi(u)du\bigg)^{\frac{1}{2}}.
\end{equation}

\quad A similar argument as in Section \ref{s22} shows that the optimal feedback control at time $t$ is
\begin{equation}
\label{eq:regfeedbackfinite}
\scaleto{\pi^{*}(u,t,x) = \frac{\exp\left(\frac{1}{\lambda} \left[h(t,x,u) + b(t,x,u) \cdot \nabla_x v_{\lambda}(t,x) + \frac{1}{2}\tr(\sigma(t,x,u) \sigma(t,x,u)^T \nabla_x^2v_{\lambda}(t,x))\right] \right)}{\int_{\mathcal{U}} \exp\left(\frac{1}{\lambda} \left[h(t,x,u) + b(t,x,u) \cdot \nabla_x v_{\lambda}(t,x) + \frac{1}{2}\tr(\sigma(t,x,u) \sigma(t,x,u)^T \nabla_x^2v_{\lambda}(t,x))\right] \right) du},}{28 pt}
\end{equation}
the exploratory HJB equation is the following nonlinear parabolic PDE:
\begin{equation}
\label{eq:finiteellipticPDE}
\left\{ \begin{array}{lcl}
\scaleto{\partial_t v_{\lambda}(t,x) + \lambda \ln \int_{\mathcal{U}} \exp\bigg(\frac{1}{\lambda} \bigg[h(t,x,u) + b(t,x,u) \cdot \nabla_x v_{\lambda}(t,x) + \frac{1}{2}\tr(\sigma(t,x,u) \sigma(t,x,u)^T \nabla_x^2v_{\lambda}(t,x))\bigg] \bigg) du = 0, \quad 0\le t \le T,}{19.5 pt} \\
\scaleto{v_{\lambda}(T,x) = h_2(x),}{8.5 pt}
\end{array}\right.
\end{equation}
and the optimal state process is governed by
\begin{equation}
\label{eq:optimalreglam2}
dX^{\lambda,*}_t = \widetilde{b}(t,X^{\lambda,*}_t, \pi^{*}(\cdot, t,X^{\lambda,*}_t))dt +  \widetilde{\sigma}(t,X^{\lambda,*}_t, \pi^{*}(\cdot, t,X^{\lambda,*}_t))dB_t,
\end{equation}
provided that it is well-posed.

\quad To identify the value function \eqref{eq:finiteregV} (resp. \eqref{eq:finiteclassical}) as the solution to the HJB equation \eqref{eq:finiteellipticPDE} (resp. \eqref{eq:finiteHJB}),
the verification theorem requires that these solutions to be $\mathcal{C}^{1,2}_{t,x}$.
However, when $\mathcal{U} = [\frac12,1]$, $h_1(x,u) = 0$, $b(x,u) = 0$ and $\sigma(x,u) = \sqrt{2u}I$,
a result of \cite{CS08} shows that the solutions to \eqref{eq:finiteHJB} are not $\mathcal{C}_{t,x}^{1,2}$. For general fully nonlinear parabolic PDEs, the solution is only known to be $\mathcal{C}_{t,x}^{\alpha, 1+\alpha}$ for some $\alpha \in (0,1)$.
We record this fact in the following theorem.
\begin{theorem} \label{thm:paraboliceqn}
Let Assumption \ref{assump:hbsig} hold for $h_1(\cdot,\cdot,\cdot), b(\cdot,\cdot, \cdot), \sigma(\cdot,\cdot,\cdot)$, and assume that $h_2(\cdot)$ satisfies
\[
|h_2(\cdot)| \leq \gamma_r\quad\text{ in }B_r.
\]
Then the HJB equation \eqref{eq:finiteellipticPDE} (resp. \eqref{eq:finiteHJB}) has a unique solution $v_\lambda$ (resp. $v$) of sub-quadratic growth for $t \in [0, T]$.
Moreover,
\begin{enumerate}[itemsep = 3 pt]
\item[(i)]
$v_{\lambda}, v$ are $\mathcal{C}_{t,x}^{\alpha,1+\alpha}$  locally uniformly in $[0,T)\times\bbR^d$ for some $\alpha \in (0,1)$.
\item[(ii)]
There exists $C > 0$ such that $\sup_{x\in B_r,t \in [0,T]} (|v(t,x)| + |v_{\lambda}(t,x)|) \le C \gamma_r$ for each $r \ge 1$.
\item[(iii)]
$v_{\lambda} \to v$ locally uniformly as $\lambda \to 0^{+}$.
\end{enumerate}
\end{theorem}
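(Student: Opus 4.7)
The plan is to transplant the elliptic machinery of Section \ref{s3} to the parabolic setting, following the same three step pattern: produce sub/supersolutions of sub-quadratic growth, invoke a parabolic comparison principle in $\mathbb{R}^d$ together with Perron's method to obtain well-posedness, and then appeal to interior regularity and half-relaxed-limit stability. Set
\[
G_\lambda(X,p,s,t,x):=-\lambda\ln\int_\calU\exp\Big(\tfrac{1}{\lambda}\big[h_1(t,x,u)+b(t,x,u)\cdot p+\tfrac12\tr(\sigma\sigma^T X)\big]\Big)du,
\]
and the analogous $G$ with $\sup_u$ in place of the log-integral. Assumption \ref{assump:hbsig} applied uniformly in $t\in[0,T]$ yields parabolic versions of (a)--(d) with the same $\gamma_r,\underline{\gamma_r}$. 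I would then treat $\partial_t v_\lambda-G_\lambda(\nabla^2_x v_\lambda,\nabla_x v_\lambda,v_\lambda,t,x)=0$ on $[0,T)\times\bbR^d$ with terminal condition $v_\lambda(T,\cdot)=h_2$, and likewise for $v$.

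The first concrete step is a parabolic comparison principle on $\bbR^d$: given a subsolution $v_1$ and supersolution $v_2$ with $v_1(T,\cdot)\leq v_2(T,\cdot)$ and $\limsup_{|x|\to\infty}(v_1-v_2)/|x|^2\leq 0$ uniformly in $t$, conclude $v_1\leq v_2$. The proof mirrors that of Lemma \ref{L.cp}: perturb $v_2$ by $\eps(C'+|x|^2)+\eps(T-t)$, verify that this is still a supersolution using \eqref{c.4} together with the extra $-\eps$ coming from the time derivative (which is in the helpful direction), and then apply a bounded-domain parabolic comparison (e.g.\ Theorem 8.2 in \cite{user}) on $[0,T]\times B_{R_\eps}$. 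The second step is the construction of global barriers of the form $\bar v(t,x):=C_1+C_2\phi(|x|)+C_3(T-t)$, where $\phi$ is the regularization of $\gamma_r$ from \eqref{e92}; the computation carrying $\bar v$ to a supersolution is exactly that in the proof of Theorem \ref{T.2.7}(i) with an additional harmless $-C_3$ from $\partial_t\bar v$, and $\bar v(T,x)\geq h_2(x)$ holds by choosing $C_1,C_2$ large enough in terms of $\gamma_r$ and the bound on $h_2$. Setting $\underline v=-\bar v$ gives the matching subsolution. Parabolic Perron's method then yields the unique viscosity solution of sub-quadratic growth, and the resulting pinch $\underline v\leq v_\lambda\leq\bar v$ produces the bound in (ii).

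For (i), the analog of Lemma \ref{T.1.5} in the parabolic setting gives interior $\calC^{\alpha,1+\alpha}_{t,x}$ estimates on $[0,T-\tau]\times B_{R_1}$ depending only on $\tau,R_1,R_2$, the structural constants in (a)--(c), and $\|v_\lambda\|_{L^\infty([0,T]\times B_{R_2})}$; see Wang's interior regularity for fully nonlinear parabolic PDEs or Crandall--Kocan--Swiech. The reason one stops at $\calC^{\alpha,1+\alpha}$ rather than $\calC^{2,\alpha}$ is precisely the \cite{CS08} obstruction noted in the statement: concavity in $X$ no longer yields classical $\calC^{1,2}_{t,x}$ solutions in the parabolic case. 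Finally, (iii) follows exactly as in Theorem \ref{T.2.7}(ii): the parabolic half-relaxed limits $v^*$ and $v_*$ of $\{v_\lambda\}$ are sub- and supersolutions of the limiting equation, they have sub-quadratic growth by the uniform bound in (ii), they agree with $h_2$ at $t=T$, and the parabolic comparison principle in $\bbR^d$ forces $v^*\leq v_*$, hence $v_\lambda\to v$ locally uniformly.

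The principal obstacle is the parabolic comparison principle in $\bbR^d$ with possibly unbounded drift (handled by the linear-in-$p$ coefficient $\underline{\gamma_r}$ satisfying \eqref{c.3}) and terminal data that is only sub-quadratic; the penalization argument must balance the bulk $|x|^2$ term against $\underline{\gamma_r}$ exactly as in Lemma \ref{L.cp}, with an extra time weight to absorb $\partial_t$. A secondary subtlety is that the standard interior parabolic regularity results are typically stated for bounded solutions in cylinders, so I would apply them on each cylinder $[0,T-\tau]\times B_{R_2}$ after using (ii) to obtain a local $L^\infty$ bound; this is why the regularity in (i) is only local on $[0,T)\times\bbR^d$ and does not extend up to $t=T$ in general.
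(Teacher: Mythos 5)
Your overall strategy --- a parabolic comparison principle in $\bbR^d$ via quadratic penalization, global barriers plus Perron's method, interior $\calC^{\alpha,1+\alpha}_{t,x}$ estimates, and half-relaxed limits for the convergence --- is exactly the adaptation of Section \ref{s3} that this theorem intends (the paper itself records the statement without proof, citing only the parabolic regularity literature). However, there is a concrete gap in both your comparison principle and your barrier construction: the finite-horizon equations \eqref{eq:finiteHJB} and \eqref{eq:finiteellipticPDE} carry \emph{no} zeroth-order term $\rho v$, so the monotonicity assumption (c) holds only with $\rho=0$ and the inequality \eqref{c.4}, namely $(C+r^2)\rho\geq 2\underline{\gamma_r}r$, is unavailable. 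In the elliptic proofs of Lemma \ref{L.cp} and Theorem \ref{T.2.7}(i) it is precisely the term $\rho\,\eps(C'+|x_0|^2)$ (resp.\ $\rho(C_1+C_2\phi)$) that absorbs the error $2\underline{\gamma_{|x_0|}}|x_0|\eps$ (resp.\ $\phi(1+C_2\phi'/|x|)+C(1+C_2|\phi''|)$), and both of these errors grow unboundedly in $|x|$ like $\gamma_{|x|}$. Your additive time corrections $\eps(T-t)$ and $C_3(T-t)$ contribute only the bounded quantities $+\eps$ and $+C_3$ after applying $-\partial_t$, so the verification that the perturbed function is still a supersolution fails for $|x|$ large.

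The fix is standard but must be stated: replace the additive corrections by a multiplicative exponential weight in time, e.g.\ take $v^\eps(t,x):=v_2(t,x)+\eps e^{K(T-t)}(C'+|x|^2)$ and $\bar v(t,x):=e^{K(T-t)}\bigl(C_1+C_2\phi(|x|)\bigr)$. Then $-\partial_t$ produces $K e^{K(T-t)}(C'+|x|^2)\eps$ and $K e^{K(T-t)}(C_1+C_2\phi)$ respectively, which play exactly the role of the discount term with $K$ in place of $\rho$; choosing $K$ large enough (depending on the constants in \eqref{3222}--\eqref{3222'}) restores the inequalities \eqref{c.4} and \eqref{c5}, while the terminal comparison $\bar v(T,\cdot)\geq h_2\geq\underline v(T,\cdot)$ is unaffected since $e^{K\cdot 0}=1$ and $|h_2|\leq\gamma_r$ in $B_r$. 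With this correction your Perron construction, the pinch $\underline v\leq v_\lambda\leq\bar v$ giving (ii), the interior $\calC^{\alpha,1+\alpha}_{t,x}$ estimates for (i), and the half-relaxed-limit argument for (iii) are all sound and consistent with the intended proof.
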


\quad We refer to \cite{LW1, LW2} and \cite{CKS00} for the interior point-wise regularity estimate for fully nonlinear parabolic PDEs.
To close the gap between what the verification theorem requires for the regularity of HJB equations and
what is known for the regularity of general fully nonlinear parabolic PDEs,
it remains a significant open question to find proper assumptions on $h_1(\cdot,\cdot,\cdot), b(\cdot,\cdot, \cdot), \sigma(\cdot,\cdot,\cdot)$ under which the HJB equations \eqref{eq:finiteellipticPDE} and  \eqref{eq:finiteHJB} have  unique $\mathcal{C}^{1,2}_{t,x}$ solutions?


\quad On the other hand, under a further assumption that $\sigma$ does not depend on $u$, \cite{GR06, GR062} showed that the verification theorem only requires the solution to the HJB equation to be $\mathcal{C}^{0,1}_{t,x}$.
Combining this result with Theorem \ref{thm:paraboliceqn}, we obtain the following analog of Theorem \ref{thm:main0} for the exploratory control problem with a finite time horizon.

\begin{theorem}
Consider the exploratory control problem \eqref{eq:finiteregV}--\eqref{eq:expS2}  whose value function is $v_\lambda$.
Let Assumption \ref{assump:hbsig} hold for $h_1(\cdot,\cdot,\cdot), b(\cdot,\cdot, \cdot), \sigma(\cdot,\cdot,\cdot)$ with $\sigma(\cdot,\cdot,\cdot) \equiv \sigma$ being constant, and assume that $h_2(\cdot)$ satisfies
\[
|h_2(\cdot)| \leq \gamma_r\quad\text{ in }B_r.
\]
Further assume that the SDE \eqref{eq:optimalreglam2} is well-posed.
Then $v_{\lambda}$ is the unique solution of sub-quadratic growth to the exploratory HJB equation \eqref{eq:finiteellipticPDE}.
Moreover, $v_{\lambda}$ is locally $\mathcal{C}^{\alpha,1+\alpha}_{t,x}$ for some $\alpha \in (0,1)$, and
\[
v_{\lambda} \to v \quad \mbox{locally uniformly as } \lambda \to 0^+,
\]
where $v$ is the value function of the classical control problem \eqref{eq:finiteclassical}
--\eqref{eq:classicalS2}
and the unique solution of sub-quadratic growth to the classical HJB equation \eqref{eq:finiteHJB}.
\end{theorem}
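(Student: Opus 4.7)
The plan is to mirror the strategy of Theorem \ref{thm:main0}, replacing the elliptic tools of Section \ref{s3} with their parabolic counterparts packaged in Theorem \ref{thm:paraboliceqn}. First I would invoke Theorem \ref{thm:paraboliceqn} to obtain a unique sub-quadratic-growth solution $v'_\lambda$ to the exploratory HJB equation \eqref{eq:finiteellipticPDE}, enjoying the local $\mathcal{C}^{\alpha,1+\alpha}_{t,x}$ regularity for some $\alpha \in (0,1)$, together with the analogous classical solution $v$ to \eqref{eq:finiteHJB}. The task then reduces to identifying $v'_\lambda$ with the value function $v_\lambda$ defined in \eqref{eq:finiteregV}. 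Once this identification is made, the claim $v_\lambda \to v$ locally uniformly as $\lambda \to 0^+$ is immediate from part (iii) of Theorem \ref{thm:paraboliceqn}.

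The identification is carried out by a verification theorem argument. The subtle point is that classical verification requires $\mathcal{C}^{1,2}_{t,x}$ regularity so that It\^o's formula applies to $v'_\lambda(s, X^\pi_s)$, whereas Theorem \ref{thm:paraboliceqn} only gives $\mathcal{C}^{\alpha,1+\alpha}_{t,x}$. This is precisely where the hypothesis that $\sigma$ is independent of $u$ intervenes: under this hypothesis $\widetilde\sigma(t,x,\pi) \equiv \sigma$ is constant across admissible relaxed controls, and \eqref{eq:finiteellipticPDE} becomes a semilinear parabolic equation with a fixed uniformly elliptic second-order part. For such semilinear equations, the generalized verification results of \cite{GR06, GR062} require only $\mathcal{C}^{0,1}_{t,x}$ regularity on $v'_\lambda$, which is amply supplied by Theorem \ref{thm:paraboliceqn}(i). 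Concretely, for any $\pi \in \mathcal{A}(t,x)$, applying this generalized It\^o-type formula to $v'_\lambda(s, X^\pi_s)$ together with the pointwise supersolution inequality obtained by dropping the supremum in the Hamiltonian yields
\[
v'_\lambda(t,x) \ge \mathbb{E}\bigg[\int_t^T \bigg(\int_{\mathcal{U}} h_1(s,X^\pi_s,u)\pi_s(u) du - \lambda \int_{\mathcal{U}} \pi_s(u)\ln \pi_s(u) du\bigg) ds + h_2(X^\pi_T)\bigg].
\]
Plugging in the optimal feedback $\pi^*_s(\cdot) = \pi^*(\cdot, s, X^{\lambda,*}_s)$ from \eqref{eq:regfeedbackfinite}, whose induced SDE \eqref{eq:optimalreglam2} is well-posed by hypothesis, attains equality, and taking the supremum over $\pi$ gives $v'_\lambda = v_\lambda$. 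The same verification applied to \eqref{eq:finiteHJB} identifies $v$ as the classical value function.

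The main obstacle is precisely the regularity mismatch in the verification step. Without the constant-$\sigma$ hypothesis, one would need $\mathcal{C}^{1,2}_{t,x}$ regularity of the viscosity solution to the exploratory HJB equation, which for general fully nonlinear parabolic PDEs is a significant open problem, as flagged in the discussion preceding the statement. The constant-$\sigma$ assumption sidesteps this by converting the equation to a semilinear one and unlocking the weaker-regularity verification of \cite{GR06, GR062}. A secondary technical check is the integrability of the stochastic integrals arising in the generalized It\^o formula, which follows from the sub-quadratic growth bound in Theorem \ref{thm:paraboliceqn}(ii) combined with standard moment estimates on $X^\pi$ using the boundedness of $\widetilde{b}$ and of the constant $\sigma$.
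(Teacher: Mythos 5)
Your proposal is correct and follows essentially the same route as the paper: the theorem is obtained by combining Theorem \ref{thm:paraboliceqn} (well-posedness, $\mathcal{C}^{\alpha,1+\alpha}_{t,x}$ regularity, and the $\lambda\to 0^+$ convergence of the PDE solutions) with the weak-regularity verification theorem of \cite{GR06, GR062}, which applies precisely because the constant-$\sigma$ hypothesis fixes the second-order part, exactly as in the elliptic argument for Theorem \ref{thm:main0}. Your added remarks on the integrability of the stochastic integrals and the role of the well-posedness of \eqref{eq:optimalreglam2} in attaining equality are consistent with the paper's (terser) treatment.
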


\section{Conclusions}
\label{s6}

\quad In this paper, we study the exploratory HJB equation arising from a continuous-time reinforcement learning framework -- that of the exploratory control -- put forth by  \cite{WZZ20}.
We establish the well-posedness and regularity of its solution under general assumptions on the system dynamics  parameters.
This allows for identifying the value function of the exploratory control problem in general cases, which goes beyond the LQ setting.
We also establish a connection between the exploratory control problem and the classical stochastic control problem by showing that the value function of the former converges to that of the latter as the weight parameter for exploration tends to zero.
We then apply our general theory to a special example -- the exploratory temperature control problem originally introduced by \cite{GXZ20} as a variant of SA.
We provide a detailed analysis of the problem, with an explicit rate of convergence derived  as the weight parameter vanishes.
We also consider the long time behavior of the associated optimally controlled process, and study properties of its stationary distribution.
The tools that we develop in this paper encompass stochastic control theory, partial differential equations and probability theory.

\quad There are many important open, if technical, questions. First, we have proved in Theorem \ref{lem:generalkey} that the exploratory HJB equation \eqref{eq:regellipticPDE} has a unique smooth solution if Assumption \ref{assump:hbsig} holds.
In particular, the drift $b$ is allowed to have sub-linear growth in $x$.
However, in order to identify the value function of the exploratory control problem as the solution to \eqref{eq:regellipticPDE}, one needs  the SDE \eqref{eq:optimalreglam} to be well-posed.
This is satisfied if $b$ is bounded. The question now is
what assumptions, in addition to Assumption \ref{assump:hbsig} and in particular the sub-linear growth of $b(\cdot, u)$, are required to ensure the well-posedness of  \eqref{eq:optimalreglam}. A related question is whether we have the well-posedness of \eqref{eq:regellipticPDE} and \eqref{eq:HJBclassical} for $b(\cdot, u)$ beyond sub-linear growth (e.g. of linear growth or polynomial growth).
If the answer to the first question is positive, then we will have a complete characterization of the exploratory control problem for sub-linear $b$'s.
In the case of the exploratory temperature control problem, we will then no longer need to impose a bounded restriction on $b = |\nabla f|$.
As discussed after Theorem \ref{thm:stationary}, we may then specify a convergence rate for the optimally controlled process with a Lyapunov condition.

\quad  Second, in the study of  stability of stationary distributions of the optimal state processes with different $\lambda$'s (Theorem \ref{thm:stability}), we make a technical assumption that $v_{\lambda}$ has bounded third derivatives.
It is challenging, yet interesting, to know under what conditions on $f$ this assumption holds.
It can be shown using the arguments in Section \ref{s3} that for bounded $f$ with all bounded derivatives, the solution to the exploratory HJB equation \eqref{eq:ellipticPDE} has such a solution.
To completely solve the stability problem, a first step is to prove/disprove whether such a solution exists for $f$ of linear growth.
More generally, one can ask under what conditions on $h(\cdot,\cdot, \cdot), b(\cdot,\cdot, \cdot), \sigma(\cdot,\cdot, \cdot)$ does  \eqref{eq:regellipticPDE} have a unique (viscosity) solution which is $\mathcal{C}^3$ with bounded third derivatives.
As discussed after Theorem \ref{thm:main0}, if  \eqref{eq:regellipticPDE} has a unique $\mathcal{C}^3$ solution,
then the SDE \eqref{eq:optimalreglam} is well-posed under additional non-explosion conditions.

\bigskip
{\bf Acknowledgement:}
Tang gratefully acknowledges financial support through an NSF grant DMS-2113779 and through a start-up grant at Columbia University.
Zhou gratefully acknowledges financial supports through a start-up grant at Columbia University and through the Nie Center for Intelligent Asset Management.

\bibliographystyle{abbrvnat}
\bibliography{unique}
\end{document}